\documentclass[11pt]{amsart}

\usepackage{stmaryrd}

\usepackage{cite}

\usepackage{tikz}
\usetikzlibrary{decorations.pathreplacing}
\usepackage{float}

\usepackage{amsmath,amssymb}
\usetikzlibrary{patterns}

\usepackage{amsfonts,color}
\usepackage{latexsym,amssymb}
\usepackage{amsmath}
\usepackage[utf8]{inputenc}
\usepackage{enumitem}
\usepackage{comment}

\usepackage[
pagebackref,
colorlinks=true,
urlcolor=blue,
citecolor=red,
linkcolor=blue,
linktocpage,
pdfpagelabels,
bookmarksnumbered,
bookmarksopen
]{hyperref}

\usepackage{ragged2e}
\usepackage{tikz}
\usepackage{float}
\usepackage{booktabs}
\usepackage{subfig}

\newtheorem{theorem}{Theorem}[section]
\newtheorem{lemma}[theorem]{Lemma}
\newtheorem{proposition}[theorem]{Proposition}
\newtheorem{corollary}[theorem]{Corollary}
\newtheorem{remark}{Remark}[section]
\newtheorem{definition}[theorem]{Definition}

\catcode`@=11 \@addtoreset{equation}{section} \catcode`@=12

\usepackage[left=3cm,right=3cm,top=3cm,bottom=3cm]{geometry}

\newcommand{\dx}{\,\mathrm{d}x}

\begin{document}
		
\title[Nodal solutions for a zero-mass \((p,q)\)-Laplacian equation]{
Nodal radial solutions for a zero-mass \((p,q)\)-Laplacian equation with Hardy-Hénon type weight
}

\author[J. C. de Albuquerque]{J. C. de Albuquerque}
\address{Departamento de Matem\'atica, Universidade Federal de Pernambuco \\
	50670-901, Recife--PE, Brazil}
\email{josecarlos.melojunior@ufpe.br}
\thanks{The first author was partially supported by CNPq with grant 301849/2025-0.}

\author[J. Carvalho]{J. Carvalho}
\address{Universidade Federal de Sergipe, Departamento de Matem\'atica, 49100-000 São Cristóvão-SE, Brazil}
\email{jonison@mat.ufs.br}\thanks{The second author was partially supported by CNPq with grant 300853/2025-4.} 

\author[E. Barboza]{E. Barboza}
\address{Departamento de Matem\'atica, Universidade Federal Rural de Pernambuco \\
	50740-560, Recife--PE, Brazil}
\email{eudes.barboza@ufrpe.br}
\thanks{The third author was partially supported by FACEPE/Brazil Proc. APQ-0967-1.01/24.}

\keywords{Quasilinear equations; Nodal solutions; Zero mass; Hardy-Hénon type weight.}
		
\subjclass[2010]{35J92, 35A15, 46E35}
		
\begin{abstract}

We establish the existence of radial nodal solutions for a class of zero-mass \((p,q)\)-Laplacian equations in \(\mathbb{R}^N\) with a Hardy–Hénon type weight \(|x|^\alpha\). The absence of lower-order potential terms motivates the use of a homogeneous radial variational framework adapted to the mixed \(p\)- and \(q\)-growth structure. A key ingredient is a weighted Sobolev embedding theorem that establishes continuity and compactness in the appropriate range of exponents and identifies the sharp interval for continuous embedding. Building on these results, we construct sign-changing radial solutions through a variational method based on radial partitions and critical levels. The main result asserts that, for every \(k\in\mathbb{N}\), the problem admits a radial weak solution with exactly \(k\) nodal zeros. The proof combines minimization on Nehari manifolds over radial subdomains with an optimization procedure that determines the nodal radii and ensures the compatibility of the solution across nodal interfaces.
\end{abstract}
		
\maketitle



\section{Introduction}

In this work we consider the following class of quasilinear elliptic equations driven by $(p,q)$-Laplacian operator
\begin{equation}\label{eq1}\tag{$\mathcal{P}$}
-\Delta_p u-\Delta_q u
=
|x|^\alpha |u|^{s-2}u,
\quad
\text{in } \mathbb{R}^N,
\end{equation}
where
$
1<p<q<N$, $N\geq 3$, $\alpha> -p,
$  
and \(\Delta_m u:=\operatorname{div}(|\nabla u|^{m-2}\nabla u)\), \(m>1\), denotes the \(m\)-Laplacian.  The nonlinearity in \eqref{eq1} contains the weight \(|x|^\alpha\), which may be singular at the origin if \(\alpha<0\) which is called Hardy-type weight, corresponds to the unweighted case if \(\alpha=0\), and becomes a Hénon-type weight if \(\alpha>0\). For this reason, \(|x|^\alpha\) is usually regarded as a Hardy--Hénon type weight.

In the last decades, quasilinear equations involving the \((p,q)\)-Laplacian operator have been extensively studied. A general model is given by
\begin{equation}\label{problema-inicial}
-\Delta_p u-\Delta_q u
+
V(x)|u|^{p-2}u
+
K(x)|u|^{q-2}u
=
f(x,u),
\quad
\text{in } \mathbb{R}^N,
\end{equation}
where \(V,K:\mathbb{R}^N\to\mathbb{R}\) are potentials and
\(f:\mathbb{R}^N\times\mathbb{R}\to\mathbb{R}\) is a nonlinear term. As mentioned in \cite{Cher-yasov}, this class of problems arises from reaction-diffusion models of the form
\[
u_t
=
\operatorname{div}\left(\mathcal{D}_{p,q}(\nabla u)\nabla u\right)
+
\mathcal{C}(x,u),
\]
where
$
\mathcal{D}_{p,q}(\nabla u)
=
|\nabla u|^{p-2}+|\nabla u|^{q-2}
$
is a nonlinear diffusion coefficient and
\[
\mathcal{C}(x,u)
=
-V(x)|u|^{p-2}u
-
K(x)|u|^{q-2}u
+
f(x,u)
\]
is the reaction term. From this perspective, \(u\) may represent a concentration, while the diffusion coefficient depends nonlinearly on the gradient. Problems of type \eqref{problema-inicial} also appear in models related to nonlinear diffusion, plasma physics, biophysics, chemical reaction design and electromagnetism, see for instance, \cite{Bonheure-Avenia-Pomponio,Bonheure-Iacopetti}.

When the potentials \(V\) and \(K\) are bounded from below by a positive constant, variational methods can usually be developed in nonhomogeneous Sobolev-type spaces, since the terms involving \(V\) and \(K\) provide control of the \(L^p\)- and \(L^q\)-norms of \(u\). In this direction, we refer the reader to
\cite{Ambrosio,Alves-Olimpio-Assuncao,He-Li,Figueiredo,Xie-Wang-Zhang,Wang-Wang-Zhang}
and the references therein. The situation is substantially different in the limiting case
$
V\equiv K\equiv0.
$
In this case, \eqref{problema-inicial} reduces to
\begin{equation}\label{problema-massa-zero}
-\Delta_p u-\Delta_q u
=
f(x,u),
\quad
\text{in } \mathbb{R}^N,
\end{equation}
which is known in the literature as the \textit{``zero-mass case''}. In this situation, the associated energy functional has only gradient terms in its principal part. Thus the usual Sobolev space involving the full norm of \(u\) is no longer the most appropriate framework. Instead, one is naturally led to homogeneous spaces, where the norm is defined only through the gradient. In this direction, see for example,
\cite{Liu-Perera,Baldelli-Brizi-Filippucci-JGA,He-Li,Pomponio-Watanabe} and references therein.

For \(1<\ell<N\), we denote by
\[
\ell_\alpha^*
:=
\frac{\ell(N+\alpha)}{N-\ell}
\]
the weighted critical exponent associated with the Sobolev-type embedding involving the weight \(|x|^\alpha\). In particular, the exponents
\[
p_\alpha^*
=
\frac{p(N+\alpha)}{N-p}
\qquad\text{and}\qquad
q_\alpha^*
=
\frac{q(N+\alpha)}{N-q}
\]
will play an important role in the analysis. 
Since \(1<p<q\), the assumption \(\alpha>-p\) implies, in particular, that
$
p_\alpha^*>p
$
and
$q_\alpha^*>q$. Moreover, under this assumption one has
$
p_\alpha^*<q_\alpha^*.
$
In order to treat the pure-power nonlinearity in \eqref{eq1} variationally and to obtain the usual geometry of the fibering maps, we shall assume that
\[
\max\{q,p_\alpha^*\}<s<q_\alpha^*.
\]
The condition \(s>q\) guarantees the superlinear character of the energy along one-dimensional rays, whereas the condition \(s<q_\alpha^*\) places the nonlinearity in the compact range of the weighted embedding. The following weighted Sobolev embedding provides the compactness required in our variational approach and, moreover, identifies the sharp range of exponents for the corresponding continuous embedding.

\begin{theorem}\label{embeddingRN}
    Assume that $1<p<q<N$ and $\alpha>-p$. Then the embedding \( E_{\mathrm{rad}}(\mathbb{R}^{N}) \hookrightarrow L^s(\mathbb{R}^N, |x|^\alpha)\) is continuous, for any $p^*_\alpha \leq s\leq q^*_\alpha$. Furthermore, this embedding is compact whenever $p^*_\alpha < s < q^*_\alpha$.
\end{theorem}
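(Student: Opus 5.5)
We need to guess the definition of $E_{\mathrm{rad}}$: presumably the radial functions in the completion of $C_c^\infty$ under $\|\nabla u\|_p+\|\nabla u\|_q$, i.e. $E = D^{1,p}\cap D^{1,q}$.

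The plan is to derive pointwise decay estimates for radial functions in $E_{\mathrm{rad}}(\mathbb{R}^N)$ and combine them with the two homogeneous Sobolev-type bounds. First I will prove a Strauss-type radial lemma for each gradient norm separately. For $u\in C_c^\infty$ radial and $1<\ell<N$, write $u(r)=-\int_r^\infty u'(t)\,dt$ and apply Hölder with the weight $t^{N-1}$. This gives
\[
|u(x)|\le C_\ell\,|x|^{-\frac{N-\ell}{\ell}}\,\|\nabla u\|_{\ell},\qquad x\neq 0,
\]
because $\int_r^\infty t^{-(N-1)/(\ell-1)}\,dt$ is finite exactly when $\ell<N$. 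By density this holds on $E_{\mathrm{rad}}$ for $\ell=p$ and for $\ell=q$. Since $\frac{N-p}{p}>\frac{N-q}{q}$, the $p$-bound gives the faster decay at infinity, while the $q$-bound is the weaker singularity at the origin.

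Next I split $\mathbb{R}^N$ into $B_1$ and $B_1^c$. On $B_1^c$ I use the $p$-estimate:
\[
\int_{B_1^c}|x|^\alpha|u|^s\le \sup_{B_1^c}\bigl(|x|^{\alpha}|u|^{s-p^*_\alpha}\bigr)\cdots
\]
Simpler is interpolation. Write $|u|^s=|u|^{s-p^*_\alpha}|u|^{p^*_\alpha}$ and bound $|u|^{s-p^*_\alpha}\le C\|\nabla u\|_p^{s-p^*_\alpha}$ on $B_1^c$, valid since $s\ge p^*_\alpha$. It then remains to control $\int |x|^\alpha|u|^{p^*_\alpha}$. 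For this I invoke the Caffarelli–Kohn–Nirenberg weighted Sobolev inequality $\bigl(\int|x|^\alpha|u|^{\ell^*_\alpha}\bigr)^{1/\ell^*_\alpha}\le C\|\nabla u\|_\ell$, which holds for radial functions for all $\alpha>-\ell$. If I want it self-contained, I can prove it in the radial case via the one-dimensional Hardy/Bliss inequality after the change of variables $r\mapsto r^{(\ell-N)/(\ell-1)}$. On $B_1$ I do the same with the $q$-estimate: write $|u|^s=|u|^{s-q^*_\alpha}|u|^{q^*_\alpha}$ with $s\le q^*_\alpha$. Here the power is negative, so I argue in the opposite direction instead. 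I use $|u|^{q^*_\alpha-s}$ being small where $|u|$ is large; equivalently, I bound $\int_{B_1}|x|^\alpha|u|^s\le \int_{B_1\cap\{|u|\le1\}}|x|^\alpha|u|^{p^*_\alpha}+\int_{\{|u|>1\}}|x|^\alpha|u|^{q^*_\alpha}$. Both terms are controlled by the CKN inequality for $p$ and for $q$ respectively. In fact this level-set splitting $|u|^s\le|u|^{p^*_\alpha}+|u|^{q^*_\alpha}$ works globally and gives continuity directly for all $p^*_\alpha\le s\le q^*_\alpha$. The decay lemma is only needed for compactness.

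For compactness with $p^*_\alpha<s<q^*_\alpha$, take $u_n\rightharpoonup 0$ in $E_{\mathrm{rad}}$, so the norms are bounded by some $M$. I split the integral into three regions.

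On $\{|x|>R\}$ the $p$-radial lemma gives $|u_n|\le CM R^{-(N-p)/p}$. Hence
\[
\int_{|x|>R}|x|^\alpha|u_n|^s\le (CMR^{-(N-p)/p})^{s-p^*_\alpha}\int|x|^\alpha|u_n|^{p^*_\alpha},
\]
which is small uniformly in $n$ as $R\to\infty$.

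On $\{|x|<\rho\}$ I use Hölder between exponents $p^*_\alpha$ and $q^*_\alpha$ with respect to the measure $|x|^\alpha dx$. Since $s<q^*_\alpha$, this produces a factor $\bigl(\int_{B_\rho}|x|^\alpha\bigr)^{\theta}$ with $\theta>0$. That integral tends to zero as $\rho\to0$ because $\alpha>-p>-N$. Alternatively I can use the $q$-radial lemma to get a uniform bound on the small-ball contribution.

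On the annulus $\rho\le|x|\le R$ the weight is bounded. There I use Rellich compactness, since $E\hookrightarrow W^{1,p}_{loc}$ and the radial functions are in fact uniformly bounded there, to get $u_n\to0$ in $L^s$. The main obstacle I expect is making the small-ball estimate uniform near $s$ close to $q^*_\alpha$ and justifying the CKN inequality over the full range $\alpha>-\ell$ in the radial class. I will handle this through the radial one-dimensional reduction.
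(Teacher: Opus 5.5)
Your proposal is correct and follows essentially the paper's proof. Both use the endpoint radial weighted Sobolev (CKN) bounds for $m=p$ and $m=q$, and then the same three-region compactness argument: H\"older against $q^*_\alpha$ on $B_\rho$, giving the factor $\rho^{(N+\alpha)(1-s/q^*_\alpha)}$; the $p$-radial decay on $\mathbb{R}^N\setminus B_R$; and Rellich plus the uniform radial bound on the annulus. The ``obstacle'' you anticipate near $s\approx q^*_\alpha$ therefore does not arise for fixed $s$.

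The differences are minor. You combine the endpoints via $|u|^s\le|u|^{p^*_\alpha}+|u|^{q^*_\alpha}$, which needs a homogeneity rescaling to become a linear bound, whereas the paper uses H\"older interpolation. You would prove the radial CKN inequality through the one-dimensional Bliss inequality, whereas the paper's lemma uses the radial lemma plus Sobolev for $\alpha\ge 0$, and H\"older between Hardy and Sobolev for $-m<\alpha<0$.
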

Here, \(E_{\mathrm{rad}}(\mathbb R^N)\) denotes the homogeneous radial space introduced in Section 2.
As a consequence of Theorem \ref{embeddingRN}, we obtain a weighted Sobolev embedding when \(\Omega\) is either a ball, an annulus, or an exterior domain centered at the origin, see subsection \ref{sub-embedding} for more details.

\begin{remark}[Sharpness of the embedding interval] The range 
$
p_\alpha^*\leq r\leq q_\alpha^*
$
of the continuous embedding in Theorem \ref{embeddingRN} is sharp. Indeed, take
$
u\in C_{0,\mathrm{rad}}^\infty(\mathbb{R}^N)\setminus\{0\}
$
and, for each \(\mu>0\), define
$
u_\mu(x):=u\left({x}/{\mu}\right)$. If the continuous embedding
$
E_{\mathrm{rad}}(\mathbb{R}^N)
\hookrightarrow
L^s(\mathbb{R}^N,|x|^\alpha)
$
holds, then there exists \(C>0\), independent of \(\mu\), such that
\begin{equation}\label{rem1}
\left(
\int_{\mathbb{R}^N}
|x|^\alpha |u_\mu(x)|^s\,\mathrm{d}x
\right)^{1/s}
\leq
C
\left(
\|\nabla u_\mu\|_{L^p(\mathbb{R}^N)}^2
+
\|\nabla u_\mu\|_{L^q(\mathbb{R}^N)}^2
\right)^{1/2}.
\end{equation}
By the change of variables \(y=x/\mu\), we have the following identities:
\begin{equation}\label{rem2}
\int_{\mathbb{R}^N}
|\nabla u_\mu(x)|^p\,\mathrm{d}x
=
\mu^{N-p}
\int_{\mathbb{R}^N}
|\nabla u(y)|^p\,\mathrm{d}y,
\end{equation}
\begin{equation}\label{rem3}
\int_{\mathbb{R}^N}
|\nabla u_\mu(x)|^q\,\mathrm{d}x
=
\mu^{N-q}
\int_{\mathbb{R}^N}
|\nabla u(y)|^q\,\mathrm{d}y,
\end{equation}
\begin{equation}\label{rem4}
\int_{\mathbb{R}^N}
|x|^\alpha |u_\mu(x)|^s\,\mathrm{d}x
=
\mu^{N+\alpha}
\int_{\mathbb{R}^N}
|y|^\alpha |u(y)|^s\,\mathrm{d}y.
\end{equation}
By combining \eqref{rem1}-\eqref{rem4} and using the elementary inequality \((a+b)^{1/2}\leq a^{1/2}+b^{1/2}\), we deduce
\[
\left(
\int_{\mathbb{R}^N}
|y|^\alpha |u(y)|^s\,\mathrm{d}y
\right)^{1/s}
\leq
C
\left(
\mu^{\gamma_p}
\|\nabla u\|_{L^p(\mathbb{R}^N)}
+
\mu^{\gamma_q}
\|\nabla u\|_{L^q(\mathbb{R}^N)}
\right),
\]
where
\[
\gamma_p:=
\frac{N-p}{p}-\frac{N+\alpha}{s} \quad \mbox{and} \quad
\gamma_q:=
\frac{N-q}{q}-\frac{N+\alpha}{s}.
\]
Since \(u\neq0\), the left-hand side is positive and independent of \(\mu\). Hence the right-hand side above cannot tend to \(0\). If
$
s<p_\alpha^*,
$
then
$
\gamma_p<0.
$
Since \(\gamma_q<\gamma_p\), we also have
$
\gamma_q<0.
$
Letting \(\mu\to+\infty\), we obtain
$
\mu^{\gamma_p}\to0
$ and $\mu^{\gamma_q}\to0$,
which leads to a contradiction. On the other hand, if
$
s>q_\alpha^*$,
then
$
\gamma_q>0.
$
Since \(\gamma_p>\gamma_q\), we also have
$
\gamma_p>0.
$
Letting \(\mu\to0^+\), we obtain
$
\mu^{\gamma_p}\to0
$ and $\mu^{\gamma_q}\to0,
$
which is not possible. Therefore, the continuity of the embedding can hold only if
$
p_\alpha^*\leq s\leq q_\alpha^*,
$
and the interval of exponents in Theorem \ref{embeddingRN} is sharp.
\end{remark}

\begin{definition}
We say that \(u\in E(\mathbb{R}^N)\) is a weak solution of \eqref{eq1} if
\[
\int_{\mathbb{R}^N}
|\nabla u|^{p-2}\nabla u\nabla \varphi\,\mathrm{d}x
+
\int_{\mathbb{R}^N}
|\nabla u|^{q-2}\nabla u\nabla \varphi\,\mathrm{d}x
=
\int_{\mathbb{R}^N}
|x|^\alpha |u|^{s-2}u\varphi\,\mathrm{d}x
\]
for every \(\varphi\in E(\mathbb{R}^N)\). A nontrivial weak solution \(u\) of \eqref{eq1} is called a \emph{nodal solution}
if it changes sign in \(\mathbb{R}^N\), that is,
$
u^+\not\equiv0
$
and
$
u^-\not\equiv0,
$
where
$
u^+:=\max\{u,0\},
$
$
u^-:=\max\{-u,0\}.
$
The connected components of the set
$
\{x\in\mathbb{R}^N:\ u(x)\neq0\}
$
are called the \emph{nodal domains} of \(u\). If \(u=u(r)\) is radial, a radius \(r_0>0\) is called a \emph{node} of \(u\)
if
$
u(r_0)=0
$
and \(u\) changes sign across \(r_0\). We say that \(u\) has exactly \(k\)
nodes if there exist
$
0<r_1<\cdots<r_k<\infty
$
such that \(r_1,\ldots,r_k\) are the only radii at which \(u\) changes sign.
In this case, \(u\) has exactly \(k+1\) nodal domains.
\end{definition}

The main result of the paper can be stated as follows.

\begin{theorem}\label{existencianodal}
Assume that $N\geq3$, $1<p<q<N$, $\alpha>-p$, and $\max\{q,p_\alpha^*\}<s<q_\alpha^*$. Then, for every \(k\in\mathbb{N}\), problem \eqref{eq1} admits a radial weak solution with exactly \(k\) nodal zeros. Equivalently, \eqref{eq1} has a radial sign-changing weak solution with exactly \(k+1\) nodal domains.
\end{theorem}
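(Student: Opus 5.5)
We follow the Bartsch--Willem / Cao--Zhu gluing scheme, adapted to the $(p,q)$-structure and the weighted homogeneous space. Fix $k\in\mathbb{N}$ and consider radial partitions
\[
0=r_0<r_1<\cdots<r_k<r_{k+1}=\infty,
\]
with associated regions $\Omega_1=B_{r_1}$, $\Omega_j=\{r_{j-1}<|x|<r_j\}$ for $2\le j\le k$, and $\Omega_{k+1}=\{|x|>r_k\}$. On each $\Omega_j$ we work in the radial homogeneous space $E_{\mathrm{rad}}(\Omega_j)$, the closure of $C^\infty_{0,\mathrm{rad}}(\Omega_j)$, extended by zero. By the consequence of Theorem \ref{embeddingRN} for balls, annuli and exterior domains (subsection \ref{sub-embedding}), $E_{\mathrm{rad}}(\Omega_j)\hookrightarrow L^s(\Omega_j,|x|^\alpha)$ compactly, since $p^*_\alpha<s<q^*_\alpha$. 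The functional is
\[
I(u)=\frac1p\int|\nabla u|^p+\frac1q\int|\nabla u|^q-\frac1s\int|x|^\alpha|u|^s,
\]
which is $C^1$ there.

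\textbf{Step 1: a least-energy solution on each region.} Let $\mathcal N_j$ be the Nehari manifold $\{u\ne0:\langle I'(u),u\rangle=0\}$ in $E_{\mathrm{rad}}(\Omega_j)$ and set $c_j=\inf_{\mathcal N_j}I$. Since $s>q>p$, the fibering map $t\mapsto I(tu)$ has a unique maximum point: $t^{-1}\frac{d}{dt}I(tu)=t^{p-2}a+t^{q-2}b-t^{s-2}c$, and $t^{p-s}a+t^{q-s}b$ is strictly decreasing. We get $I\ge(\frac1q-\frac1s)(\|\nabla u\|_p^p+\|\nabla u\|_q^q)$ on $\mathcal N_j$. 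The embedding yields a positive lower bound on $\mathcal N_j$: with mixed norms one must split into the cases where $\|\nabla u\|_p$ or $\|\nabla u\|_q$ is small, using $s>q$. A minimizing sequence is bounded, and compactness passes it to a weak limit with $\int|x|^\alpha|u|^s>0$. We then project it onto $\mathcal N_j$ and use weak lower semicontinuity to obtain a minimizer $w_j$. Replacing $w_j$ by $|w_j|$, we may take $w_j\ge0$. Since $\mathcal N_j$ is a natural constraint, $w_j$ solves the equation weakly in $\Omega_j$. By regularity (Lieberman, $C^{1,\beta}_{\rm loc}$ away from the origin) and the strong maximum principle for the $(p,q)$-Laplacian, $w_j>0$ in $\Omega_j$.

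\textbf{Step 2: choose the optimal radii.} Define
\[
\Phi(r_1,\dots,r_k)=\sum_{j=1}^{k+1}c_j(r_{j-1},r_j)
\]
and minimize over admissible partitions. We need three facts:
\begin{enumerate}
\item[(a)] each $c_j$ is continuous in its endpoints;
\item[(b)] $c_j\to+\infty$ when an annulus degenerates, i.e.\ $r_j-r_{j-1}\to0$ with $r_{j-1}$ bounded away from $0$ and $\infty$, or when the ratio $r_j/r_{j-1}\to1$;
\item[(c)] the degenerate configurations $r_1\to0$ or $r_k\to\infty$ do not produce a lower value.
\end{enumerate}
For (a), rescale the regions ($u(x)\mapsto u(\lambda x)$), but the $p$ and $q$ terms scale differently, so we use instead a radial diffeomorphism stretching $[r_{j-1},r_j]$ and check that the Nehari projections vary continuously. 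For (b), use a Poincaré-type inequality on thin annuli. For (c), $r_1\to0$ makes the first ball shrink, and by the scaling computed in the sharpness remark, with $s>p^*_\alpha$, the infimum on small balls blows up. Symmetrically, as $r_k\to\infty$, $s<q^*_\alpha$ forces $c_{k+1}\to\infty$ on far exterior domains; also $c_j\to\infty$ for annuli escaping to $0$ or $\infty$. Checking these two scaling blow-ups with the mixed homogeneity is where the hypothesis $\max\{q,p^*_\alpha\}<s<q^*_\alpha$ is used. Hence $\Phi$ is coercive on the open set of partitions and attains its minimum at some $(\bar r_1,\dots,\bar r_k)$.

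\textbf{Step 3: glue and verify (main obstacle).} Set $u=\sum_j(-1)^{j+1}w_j$, which has exactly the nodes $\bar r_j$. Since $u$ solves the equation in each $\Omega_j$, for a radial test function integration by parts gives an interface term at each $\bar r_j$, namely the jump of $\bigl(|w'|^{p-2}+|w'|^{q-2}\bigr)w'$. We must show the flux $|w_j'(\bar r_j)|$ matches $|w_{j+1}'(\bar r_j)|$. The idea is to compute the one-sided derivatives of $c_j$ with respect to the endpoint $r_j$: Hadamard-type formulas, obtained by a domain-variation/Pohozaev argument, should give
\[
\partial_{r_j}c_j=-\Bigl(1-\tfrac1p\Bigr)|w_j'(r_j)|^p-\Bigl(1-\tfrac1q\Bigr)|w_j'(r_j)|^q
\]
times the surface measure, and the opposite-sign analogue for $c_{j+1}$. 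Stationarity of $\Phi$ then forces
\[
g(|w_j'(\bar r_j)|)=g(|w_{j+1}'(\bar r_j)|),\qquad g(t)=\Bigl(1-\tfrac1p\Bigr)t^p+\Bigl(1-\tfrac1q\Bigr)t^q,
\]
and since $g$ is strictly increasing, the absolute values of the derivatives agree. As $u$ alternates sign, the fluxes then coincide, so $u$ is $C^1$ across the nodes and is a weak solution on $\mathbb{R}^N$; density of radial test functions together with the principle of symmetric criticality gives the full weak formulation.

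If differentiability of $c_j$ is problematic (uniqueness of minimizers is unknown), the first thing to try is a comparison argument instead: if the fluxes differed at $\bar r_j$, move $r_j$ slightly and construct explicit competitors by reparametrizing $w_j,w_{j+1}$. This yields a strictly smaller $\Phi$, contradicting minimality. This step is the main difficulty.
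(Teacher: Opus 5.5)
\textbf{Gap in Step 3.} Your Steps 1 and 2 are essentially the paper's argument. Step 3, however, is the one genuinely delicate point of the theorem, and it is not proved. Your main route computes $\partial_{r_j}c_j$ by a Hadamard formula and then uses stationarity of $\Phi$ at the minimizing partition. This presupposes that the levels are differentiable in the radii, and nothing in the construction provides that: the level is an infimum, and least-energy solutions on a piece need not be unique, as you concede. The fallback ``move $r_j$ and reparametrize'' is only named, not carried out. So as written nothing shows that $u'$ has no jump at the nodal radii, and that is exactly what makes the glued function a weak solution on $\mathbb{R}^N$.

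\textbf{How to close it.} The fallback does work, and it needs no differentiability, because only \emph{upper} bounds on the perturbed levels are used.
\begin{itemize}
\item Keep the fixed minimizers $w_j,w_{j+1}$ of the optimal partition. Take $\eta\in C_c^\infty((\bar r_{j-1},\bar r_{j+1}))$ with $\eta\equiv1$ near $\bar r_j$, and transport both pieces by $r\mapsto r+h\eta(r)$.
\item Projecting onto the new Nehari manifolds changes the energy only by $O(h^2)$, since each $w_j$ is the maximum point of its own fiber.
\item Multiply the radial equation by $\eta w_j'$; this reduces the first variation to a boundary term at $\bar r_j$. It is legitimate because $r^{N-1}(|w_j'|^{p-2}w_j'+|w_j'|^{q-2}w_j')$ is $C^1$ up to $\bar r_j$, which also gives existence of the one-sided derivatives $u'_\pm$ there.
\end{itemize}
With your $g$, this yields, for \emph{both} signs of $h$,
\[
\Phi(\ldots,\bar r_j+h,\ldots)\le\Phi(\bar r_1,\ldots,\bar r_k)+h\,\omega_N\bar r_j^{N-1}\left(g(|u'_+|)-g(|u'_-|)\right)+o(h).
\]
Choosing the sign of $h$ contradicts minimality unless $|u'_+|=|u'_-|$. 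Since $u'_\pm$ have the same sign (the pieces alternate), this is the matching condition.

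\textbf{Comparison with the paper.} The paper reaches the same contradiction without any Pohozaev-type computation. It replaces $u$ on $(\tau-\delta,\tau+\delta)$ by the linear interpolant between $u(\tau\pm\delta)$, whose zero $\tau_0$ becomes the new node. It then projects the two halves onto $\mathcal N_{\Omega(\rho,\tau_0)}$ and $\mathcal N_{\Omega(\tau_0,\sigma)}$. Whenever $u'_-\neq u'_+$, strict convexity of $t\mapsto t^m$, $m\in\{p,q\}$, gives an energy drop $-\xi_0\delta+o(\delta)$.

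\textbf{A correction in Step 2.} You have the hypotheses reversed. Levels on shrinking balls blow up because $s<q^*_\alpha$, via H\"older against the $q^*_\alpha$-endpoint of Lemma \ref{lemma_important}. Levels on $\{|x|>R\}$, $R\to\infty$, blow up because $s>p^*_\alpha$, via the decay $|u(x)|\le C|x|^{-(N-p)/p}\|\nabla u\|_{L^p}$. Your conclusions there are nevertheless correct.
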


The strategy follows the method used by Bartsch and Willem \cite{Bartsch-Willem} (see also \cite{Cao-Zhu,Fan-Zhao, Nehari}) for the construction of nodal radial solutions. First, on each radial subdomain we obtain one-sign solutions by minimizing the energy on a suitable Nehari manifold. Then we associate to each radial partition a sum of least energy levels, one for each nodal region, and minimize this sum over the class of admissible partitions. The minimizer determines the nodal radii, while the corresponding one-sign solutions are glued with alternating signs. A delicate point in this construction is to prove that the resulting function is not only continuous, but also satisfies the correct matching condition at the nodal radii. In other words, one has to show that no jump of the radial derivative occurs at the interfaces between two consecutive nodal regions. This is obtained through a local perturbation argument around each nodal radius, combined with a sharp energy comparison. Once this matching property is established, the glued function becomes a radial weak solution of the original equation.

\begin{remark}
Although the main purpose of this paper is the construction of nodal radial solutions, the whole-space embedding result (Theorem \ref{embeddingRN}) also yields solutions of \eqref{eq1} with definite sign. Indeed, under the same assumptions on the exponent \(s\), the functional
\[
I_{\mathbb{R}^N}(u)
=
\frac{1}{p}\int_{\mathbb{R}^N}|\nabla u|^p\,\mathrm{d}x
+
\frac{1}{q}\int_{\mathbb{R}^N}|\nabla u|^q\,\mathrm{d}x
-
\frac{1}{s}\int_{\mathbb{R}^N}|x|^\alpha |u|^s\,\mathrm{d}x
\]
is well defined and of class \(C^1\) on \(E_{\mathrm{rad}}(\mathbb{R}^N)\). Moreover, the compact weighted embedding in the radial setting allows one to apply the usual minimization argument on the Nehari manifold to obtain a nontrivial radial critical point of \(I_{\mathbb{R}^N}\). Then we use a version of the principle of symmetric criticality (Proposition \ref{palais}) to conclude that this critical point is a weak solution of \eqref{eq1}. Since
$
I_{\mathbb{R}^N}(|u|)=I_{\mathbb{R}^N}(u),
$
the minimizer may be chosen nonnegative. Therefore, by the strong maximum principle, it is in fact positive in \(\mathbb{R}^N\). Finally, because the equation is odd with respect to \(u\), the function \(-u\) is also a weak solution. Therefore, the same variational framework provides both a positive and a negative radial weak solution of \eqref{eq1}.
\end{remark}

\begin{remark}
A similar approach can be applied to problem \eqref{eq1} with a general nonlinear term \(f(x,u)\), namely
\[
-\Delta_p u-\Delta_q u
=
|x|^\alpha f(x,u),
\quad
\text{in } \mathbb{R}^N,
\]
provided that \(f\) satisfies the usual Bartsch--Willem type assumptions, see \cite{Bartsch-Willem}. These hypotheses guarantee the variational structure, the compactness needed in the radial setting, the Nehari-type geometry on each radial subdomain, and the oddness required to construct positive and negative pieces. Therefore, the radial partition method used here also yields nodal radial solutions for such more general nonlinearities.
\end{remark}

\vspace{0,2cm}

\noindent \textbf{Outline.}
The remainder of the paper is organized as follows. In Section \ref{sec2}, we introduce the variational framework associated with the problem. We define the radial spaces used throughout the paper, prove the weighted Sobolev embeddings needed for the analysis, and present the weak formulation of the equation. Section \ref{sec3} is devoted to the construction of nodal radial solutions. More precisely, we first study positive and negative solutions on radial subdomains. Then we analyze qualitative properties of the associated critical values and use them to construct nodal solutions by minimizing suitable sums of critical levels over radial partitions. Finally, we prove Theorem \ref{existencianodal} by showing that the constructed nodal profile satisfies the required matching conditions at the nodal radii and hence gives a genuine weak solution of the original problem.

\section{The variational framework}\label{sec2}

This section is devoted to introducing the functional setting to the variational treatment of problem \eqref{eq1}. 

\subsection{The function spaces}

Since the equation is posed in the zero-mass case, no lower-order potential term is available to control the \(L^p\)- or \(L^q\)-norm of the solution. For this reason, we consider the space
\[
E(\mathbb{R}^{N})
:=
\overline{C_0^\infty(\mathbb{R}^N)}^{\|\cdot\|_{p,q}},
\]
with respect to the norm
\[
\|u\|_{p,q}
:=
\left(
\|\nabla u\|_{L^p(\mathbb{R}^N)}^2
+
\|\nabla u\|_{L^q(\mathbb{R}^N)}^2
\right)^{1/2}.
\]
We also introduce the radial homogeneous space
\[
E_{\mathrm{rad}}(\mathbb{R}^{N})
:=
\overline{C_{0,\mathrm{rad}}^\infty(\mathbb{R}^N)}^{\|\cdot\|_{p,q}}.
\]
Throughout this work, we also use the corresponding radial spaces on radial domains. More precisely, if
\(\Omega\subset\mathbb{R}^N\) is a ball, an annulus or an exterior domain centered at the origin, we define
\[
E_{\mathrm{rad}}(\Omega)
:=
\overline{C_{0,\mathrm{rad}}^\infty(\Omega)}^{\|\cdot\|_{p,q}},
\]
where
\[
\|u\|_{p,q,\Omega}
:=
\left(
\|\nabla u\|_{L^p(\Omega)}^2
+
\|\nabla u\|_{L^q(\Omega)}^2
\right)^{1/2}.
\]
When \(\Omega\) is an exterior domain of the form
\[
\Omega(R,\infty)
:=
\{x\in\mathbb{R}^N:\ |x|>R\}=\mathbb{R}^N\setminus B_R,
\]
the same definition gives the appropriate homogeneous finite-energy space. In this case, the elements of \(E_{\mathrm{rad}}(\Omega(R,\infty))\) have zero trace on the inner boundary \(\partial B_R\), while no boundary condition is imposed at infinity.

Henceforth, for $\Omega\subset\mathbb{R}^{N}$ and each $1\leq s<\infty$, the weighted Lebesgue space is defined by
$$
L^s(\Omega,|x|^{\alpha}):=\left\lbrace u:\Omega \to \mathbb{R} \ \mbox{measurable}  : \int_{\Omega}|x|^{\alpha}|u|^s\dx<\infty \right\rbrace, 
$$
equipped with the norm
$$
\|u\|_{L^s(\Omega,|x|^{\alpha})}:=\left(\int_{\Omega}|x|^{\alpha}|u|^s\dx\right)^{1/s}.
$$

\subsection{Weighted Sobolev embeddings for radial functions}\label{sub-embedding}

Inspired by \cite{Figueiredo-Moreira-Miyagaki,Su-Wang-Willem1,Su-Wang-Willem2}, we establish continuous and compact weighted Sobolev embeddings. Before proving the embedding for \(E_{\mathrm{rad}}(\mathbb{R}^N)\), we establish an auxiliary endpoint estimate in Sobolev space \(D_{\mathrm{rad}}^{1,m}(\mathbb{R}^N)\), with \(m\in\{p,q\}\). For this purpose, we define
\[
m^*:=\frac{mN}{N-m} \quad \mbox{and}
\quad
m_\alpha^*:=\frac{m(N+\alpha)}{N-m}.
\]
Since \(\alpha>-p\) and \(p<q<N\), we have in particular that \(m_\alpha^*>0\) for both \(m=p\) and \(m=q\).

The following result is a version of the radial
Caffarelli-Kohn-Nirenberg inequality established in \cite{Arka-Hoai},
for which we provide a more elementary proof.

\begin{lemma}\label{lemma_important}
    Assume that $1<p<q<N$, $\alpha>-p$ and $m\in\{p,q\}$. Then,
    \begin{equation}\label{important}
\left(
\int_{\mathbb{R}^N}|x|^\alpha |u|^{m_\alpha^*}\,\mathrm{d}x
\right)^{1/m_\alpha^*}
\leq
C\|\nabla u\|_{L^m(\mathbb{R}^N)}, \quad \forall \, u\in D_{\mathrm{rad}}^{1,m}(\mathbb{R}^N).
\end{equation} 
\end{lemma}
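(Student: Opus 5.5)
We prove \eqref{important} for \(u\in C_{0,\mathrm{rad}}^\infty(\mathbb{R}^N)\) and then pass to \(D_{\mathrm{rad}}^{1,m}(\mathbb{R}^N)\) by density and Fatou's lemma. The argument is elementary: a pointwise radial decay estimate (Strauss/Ni type) is combined with the unweighted Sobolev inequality through an interpolation. The weight \(|x|^\alpha\) is absorbed by the pointwise bound when \(\alpha\ge 0\). When \(-m<\alpha<0\) it is handled by a Hardy-type argument.

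\emph{Step 1 (radial pointwise bound).} For radial \(u\in C_{0,\mathrm{rad}}^\infty\) write \(u(r)=-\int_r^\infty u'(t)\,dt\). By Hölder with the weight \(t^{N-1}\),
\[
|u(r)|\le \Big(\int_r^\infty |u'|^m t^{N-1}dt\Big)^{1/m}\Big(\int_r^\infty t^{-\frac{N-1}{m-1}}dt\Big)^{\frac{m-1}{m}}\le C\,r^{-\frac{N-m}{m}}\|\nabla u\|_{L^m},
\]
since \(N>m\) makes the second integral converge.

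\emph{Step 2 (case \(\alpha\ge0\)).} Note that \(m_\alpha^*-m^*=\frac{m\alpha}{N-m}\). We split
\[
|x|^\alpha|u|^{m_\alpha^*}=\big(|x|^{\frac{N-m}{m}}|u|\big)^{\frac{m\alpha}{N-m}}|u|^{m^*}.
\]
The first factor has exponent \(\frac{N-m}{m}\cdot\frac{m\alpha}{N-m}=\alpha\) on \(|x|\), and by Step 1 it is bounded by \(C\|\nabla u\|_m^{m\alpha/(N-m)}\). The classical Sobolev inequality gives \(\int|u|^{m^*}\le C\|\nabla u\|_m^{m^*}\). Multiplying, we get \(\int |x|^\alpha|u|^{m_\alpha^*}\le C\|\nabla u\|_m^{m_\alpha^*}\), which is exactly \eqref{important}.

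\emph{Step 3 (case \(-m<\alpha<0\)).} Here the weight is singular at the origin, so we cannot simply use the pointwise bound there. A negative power of \(|u|\) cannot be used, so we instead interpolate between Sobolev and Hardy. The Hardy inequality \(\int |x|^{-m}|u|^m\le C\|\nabla u\|_m^m\) holds since \(m<N\). Note that \(m_\alpha^*\) lies between \(m\) and \(m^*\) because \(-m<\alpha<0\). Choose \(\theta\in(0,1)\) with \(m_\alpha^*=\theta m+(1-\theta)m^*\). Writing \(|x|^\alpha|u|^{m_\alpha^*}=(|x|^{-m}|u|^m)^\theta(|u|^{m^*})^{1-\theta}\) requires \(\alpha=-m\theta\), i.e. \(\theta=-\alpha/m\). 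A direct check confirms that this \(\theta\) also satisfies
\[
\theta m+(1-\theta)\tfrac{mN}{N-m}=\tfrac{m(N+\alpha)}{N-m}.
\]
Then Hölder with exponents \(1/\theta\) and \(1/(1-\theta)\) gives
\[
\int|x|^\alpha|u|^{m_\alpha^*}\le\Big(\int|x|^{-m}|u|^m\Big)^\theta\Big(\int|u|^{m^*}\Big)^{1-\theta}\le C\|\nabla u\|_m^{m\theta+m^*(1-\theta)}=C\|\nabla u\|_m^{m_\alpha^*}.
\]
Radial symmetry is not needed in this case.

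\emph{Main obstacle.} The only delicate points are checking the exponent bookkeeping in each regime and the density passage. For the density step, take \(u_n\to u\) in \(D^{1,m}\) with \(u_n\) radial and smooth. Up to a subsequence \(u_n\to u\) a.e. (via the Sobolev \(L^{m^*}\) convergence), and Fatou's lemma transfers the inequality to \(u\). Step 1 is the ingredient that genuinely uses radiality, and it is needed only for \(\alpha>0\), where the growth of the weight at infinity must be compensated by the decay of \(u\).
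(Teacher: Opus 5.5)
Your proposal is correct and follows essentially the same route as the paper: for $\alpha\ge 0$ you combine the radial decay bound $|u(x)|\le C|x|^{-\frac{N-m}{m}}\|\nabla u\|_{L^m}$ with the Sobolev inequality, and for $-m<\alpha<0$ you use H\"older interpolation between Hardy and Sobolev with $\theta=-\alpha/m$. The differences are minor additions. You prove the Strauss-type radial bound directly instead of citing it, and you spell out the density-plus-Fatou passage from $C_{0,\mathrm{rad}}^\infty$ to $D^{1,m}_{\mathrm{rad}}$, which the paper leaves implicit.
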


\begin{proof}
    Indeed, if \(\alpha=0\), then \eqref{important} is exactly the usual Sobolev inequality
$
D^{1,m}(\mathbb{R}^N)\hookrightarrow L^{m^*}(\mathbb{R}^N).
$
Now, assume that \(\alpha>0\). Then \(m_\alpha^*>m^*\). In view of radial lemma (see \cite[Lemma 1]{Su-Wang-Willem2}), there exists \(C_1>0\) such that
\begin{equation}\label{radial}
|u(x)|
\leq
C_1|x|^{-\frac{N-m}{m}}\|\nabla u\|_{L^m(\mathbb{R}^N)},
\qquad x\neq0.
\end{equation} 
It follows from \eqref{radial} and Sobolev embedding that
\[
\begin{aligned}
\int_{\mathbb{R}^N}|x|^\alpha |u|^{m_\alpha^*}\,\mathrm{d}x
&=
\int_{\mathbb{R}^N}
|x|^\alpha |u|^{m_\alpha^*-m^*}|u|^{m^*}\,\mathrm{d}x
\\
&\leq
C_1\|\nabla u\|_{L^m(\mathbb{R}^N)}^{m_\alpha^*-m^*}
\int_{\mathbb{R}^N}
|x|^{\alpha-\frac{N-m}{m}(m_\alpha^*-m^*)}
|u|^{m^*}\,\mathrm{d}x\\
& =
C_1\|\nabla u\|_{L^m(\mathbb{R}^N)}^{m_\alpha^*-m^*}
\int_{\mathbb{R}^N}|u|^{m^*}\,\mathrm{d}x\leq C_2\|\nabla u\|_{L^m(\mathbb{R}^N)}^{m_\alpha^*},
\end{aligned}
\]
where we used that \(\alpha-(N-m)(m_\alpha^*-m^*)/m=0\).

Now assume that \(-m<\alpha<0\). Set $\theta:=-\alpha/m\in(0,1)$. A direct computation gives
$$
m_\alpha^*
=
\theta m+(1-\theta)m^*.
$$
Thus, it follows from H\"older's inequality that
\[
\begin{aligned}
\int_{\mathbb{R}^N}|x|^\alpha |u|^{m_\alpha^*}\,\mathrm{d}x & = \int_{\mathbb{R}^N}\left(\frac{|u|^m}{|x|^m}\right)^\theta
\left(|u|^{m^*}\right)^{1-\theta}\,\mathrm{d}x\\
& \leq \left(
\int_{\mathbb{R}^N}\frac{|u|^m}{|x|^m}\,\mathrm{d}x
\right)^\theta
\left(
\int_{\mathbb{R}^N}|u|^{m^*}\,\mathrm{d}x
\right)^{1-\theta}.
\end{aligned}
\]
Using Hardy's inequality,
\[
\int_{\mathbb{R}^N}\frac{|u|^m}{|x|^m}\,\mathrm{d}x
\leq
C_3\int_{\mathbb{R}^N}|\nabla u|^m\,\mathrm{d}x,
\]
and the Sobolev inequality,
\[
\left(
\int_{\mathbb{R}^N}|u|^{m^*}\,\mathrm{d}x
\right)^{m/m^*}
\leq
C_4\int_{\mathbb{R}^N}|\nabla u|^m\,\mathrm{d}x,
\]
we conclude that
\[
\int_{\mathbb{R}^N}|x|^\alpha |u|^{m_\alpha^*}\,\mathrm{d}x
\leq
C_5\|\nabla u\|_{L^m(\mathbb{R}^N)}^{m\theta+m^*(1-\theta)}
\leq
C_5\|u\|_{m}^{m_\alpha^*}.
\]
Therefore, \eqref{important} holds for every \(\alpha>-m\). This finishes the proof.
\end{proof}

Now, we are able to prove the weighted Sobolev embedding in $\mathbb{R}^{N}$ stated in Theorem \ref{embeddingRN}.


\begin{proof}[\textbf{Proof of Theorem \ref{embeddingRN}}]
By applying \eqref{important} with \(m=p\) and \(m=q\), we have
\[
\|u\|_{L^{p_\alpha^*}(\mathbb{R}^N,|x|^\alpha)}
\leq
C_1\|\nabla u\|_{L^p(\mathbb{R}^N)}
\quad 
\mbox{and}
\quad 
\|u\|_{L^{q_\alpha^*}(\mathbb{R}^N,|x|^\alpha)}
\leq
C_1\|\nabla u\|_{L^q(\mathbb{R}^N)}.
\]
If \(s=p_\alpha^*\) or \(s=q_\alpha^*\) then the result follows directly from the estimates above. Case \(p_\alpha^*<s<q_\alpha^*\), we can use the interpolation inequality to obtain
\[
\begin{aligned} 
\|u\|_{L^s(\mathbb{R}^N,|x|^\alpha)}
& \leq
\|u\|_{L^{p_\alpha^*}(\mathbb{R}^N,|x|^\alpha)}^\theta
\|u\|_{L^{q_\alpha^*}(\mathbb{R}^N,|x|^\alpha)}^{1-\theta}\leq 
C_2
\|\nabla u\|_{L^p(\mathbb{R}^N)}^{\theta}
\|\nabla u\|_{L^q(\mathbb{R}^N)}^{1-\theta}\leq
C_2\|u\|_{p,q},
\end{aligned}
\]
for some \(\theta\in[0,1]\). Therefore,
$
E_{\mathrm{rad}}(\mathbb{R}^N)
\hookrightarrow
L^s(\mathbb{R}^N,|x|^\alpha)
$
continuously for every \(p_\alpha^*\leq s\leq q_\alpha^*\).

We now prove compactness.  Let \((u_n)_n\subset E_{\mathrm{rad}}(\mathbb{R}^N)\) be such that
$
u_n\rightharpoonup 0
$
weakly in $E_{\mathrm{rad}}(\mathbb{R}^N)$. Then \((u_n)_n\) is bounded in \(E_{\mathrm{rad}}(\mathbb{R}^N)\). Denote
$
M:=\sup_n\|u_n\|_{p,q}<\infty
$
and fix $s\in (p_\alpha^*,q_\alpha^*)$. For \(0<\rho<R\), we split \(\mathbb{R}^N\) into three regions:
\[
\mathbb{R}^N
=
B_\rho
\cup
(B_R\setminus B_\rho)
\cup
(\mathbb{R}^N\setminus B_R).
\]

First, we estimate the integral over the ball $B_\rho$. Since \(s<q_\alpha^*\), H\"older's inequality gives
\begin{equation}\label{B1}
\int_{B_\rho}|x|^\alpha |u_n|^s\,\mathrm{d}x
\leq
\left(
\int_{B_\rho}|x|^\alpha |u_n|^{q_\alpha^*}\,\mathrm{d}x
\right)^{s/q_\alpha^*}
\left(
\int_{B_\rho}|x|^\alpha\,\mathrm{d}x
\right)^{1-s/q_\alpha^*}.
\end{equation}
It follows from \eqref{important} with \(m=q\) that
\begin{equation}\label{B2}
\int_{B_\rho}|x|^\alpha |u_n|^{q_\alpha^*}\,\mathrm{d}x
\leq
C_3\|\nabla u_n\|_{L^q(\mathbb{R}^N)}^{q_\alpha^*}
\leq
C_3M^{q_\alpha^*}.
\end{equation}
Moreover, since \(\alpha>-p>-N\), we have
\begin{equation}
\int_{B_\rho}|x|^\alpha\,\mathrm{d}x
=
C_4\rho^{N+\alpha}.
\end{equation}
Thus, combining \eqref{B1}--\eqref{B3} we obtain
\begin{equation*}
\int_{B_\rho}|x|^\alpha |u_n|^s\,\mathrm{d}x
\leq
C_5M^s
\rho^{(N+\alpha)\left(1-\frac{s}{q_\alpha^*}\right)}.
\end{equation*}
Since \(s<q_\alpha^*\), given \(\varepsilon>0\), we can choose \(\rho>0\) small such that
\begin{equation}\label{B3}
    \int_{B_\rho}|x|^\alpha |u_n|^s\,\mathrm{d}x<\varepsilon.
\end{equation}

Next, we estimate over $\mathbb{R}^N\setminus B_R$. Since \(s>p_\alpha^*\), by using \eqref{radial} with \(m=p\), we deduce
\[
\begin{aligned}
\int_{\mathbb{R}^N\setminus B_R}|x|^\alpha |u_n|^s\,\mathrm{d}x
&=
\int_{\mathbb{R}^N\setminus B_R}
|x|^\alpha |u_n|^{s-p_\alpha^*}|u_n|^{p_\alpha^*}\,\mathrm{d}x
\\
&\leq
C_6M^{s-p_\alpha^*}
\int_{\mathbb{R}^N\setminus B_R}
|x|^{\alpha-(s-p_\alpha^*)\frac{N-p}{p}}
|u_n|^{p_\alpha^*}\,\mathrm{d}x.
\end{aligned}
\]
Set
$
\delta:=
(s-p_\alpha^*)(N-p)/p>0.
$
Then, for \(|x|\geq R\), we have
$
|x|^{\alpha-\delta}
\leq
R^{-\delta}|x|^\alpha.
$
Thus,
\[
\int_{\mathbb{R}^N\setminus B_R}|x|^\alpha |u_n|^s\,\mathrm{d}x
\leq
C_6M^{s-p_\alpha^*}R^{-\delta}
\int_{\mathbb{R}^N}
|x|^\alpha |u_n|^{p_\alpha^*}\,\mathrm{d}x.
\]
Using \eqref{important} with \(m=p\), we obtain
\begin{equation*}
\int_{\mathbb{R}^N\setminus B_R}|x|^\alpha |u_n|^s\,\mathrm{d}x
\leq
C_6M^s R^{-\delta}.
\end{equation*}
Since \(\delta>0\), for \(R>0\) sufficiently large, we can conclude that
\begin{equation}\label{RnBfinal}
    \int_{\mathbb{R}^N\setminus B_R}|x|^\alpha |u_n|^s\,\mathrm{d}x<\varepsilon.
\end{equation}

It remains to prove the convergence on the annulus
$
A_{\rho,R}:=B_R\setminus {B_\rho},
$ for $0<\rho<R$. Since \(|x|^\alpha\) is bounded on \(A_{\rho,R}\), it is enough to prove that
$
u_n\to0
$ strongly in $L^s(A_{\rho,R})$. We claim that \((u_n)_n\) is bounded in \(W^{1,q}(A_{\rho,R})\). In fact, by the Sobolev embedding
\(
D^{1,q}(\mathbb{R}^N)\hookrightarrow L^{q^*}(\mathbb{R}^N)
\)
and Hölder's inequality, since \(|A_{\rho,R}|<\infty\), we also have
\[
\|u_n\|_{L^q(A_{\rho,R})}\leq C_7M.
\]
By the compact embedding
\(
W^{1,q}(A_{\rho,R})\hookrightarrow L^q(A_{\rho,R}),
\)
up to a subsequence,
\(
u_n\to 0
\)
strongly in \(L^q(A_{\rho,R})\). Moreover, by \eqref{radial} and \(|x|\geq\rho\) on \(A_{\rho,R}\),
\[
\|u_n\|_{L^\infty(A_{\rho,R})}
\leq C_8\rho^{-\frac{N-q}{q}}\|u_n\|_{p,q}
\leq C_\rho M.
\]
Thus, since \(s> q\),
\[
\int_{A_{\rho,R}}|u_n|^s\,\mathrm{d}x
\leq
\|u_n\|_{L^\infty(A_{\rho,R})}^{s-q}
\int_{A_{\rho,R}}|u_n|^q\,\mathrm{d}x
\leq
(C_\rho M)^{s-q}\|u_n\|_{L^q(A_{\rho,R})}^q\to0,
\]
as \(n\to\infty\). Finally, since \(|x|^\alpha\) is bounded on \(A_{\rho,R}\), we conclude that
\begin{equation}\label{Afinal}
\int_{A_{\rho,R}}|x|^\alpha |u_n|^s\,\mathrm{d}x
\leq
C_{\rho,R,\alpha}
\int_{A_{\rho,R}}|u_n|^s\,\mathrm{d}x
\to0,
\end{equation}
as \(n\to\infty\). This combined with \eqref{B3} and \eqref{RnBfinal} implies that
\[
\int_{\mathbb{R}^N}|x|^\alpha |u_n|^s\,\mathrm{d}x
\to0, \quad \mbox{as } n\rightarrow\infty.
\]
This proves compactness and finishes the proof.
\end{proof}

\begin{corollary}\label{embedding-omega}
Let \(0<r<R\) be arbitrary, then the embedding $
E_{\mathrm{rad}}(\Omega)\hookrightarrow L^s(\Omega,|x|^\alpha)
$
is continuous for every
$
p_\alpha^*\leq s\leq q_\alpha^*,
$
in the following cases: \(\Omega=B_r\), \(\Omega=B_R\setminus B_r\), or \(\Omega=\mathbb{R}^N \setminus B_R\). Moreover, this embedding is compact whenever
$
p_\alpha^*<s<q_\alpha^*.
$
\end{corollary}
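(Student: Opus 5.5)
The plan is to reduce Corollary \ref{embedding-omega} to Theorem \ref{embeddingRN} by extension by zero. Let \(\Omega\) be one of the three radial domains. For \(u\in C_{0,\mathrm{rad}}^\infty(\Omega)\), define \(\tilde u\) as \(u\) on \(\Omega\) and \(0\) on \(\mathbb{R}^N\setminus\Omega\). Since \(u\) has compact support inside \(\Omega\), we have \(\tilde u\in C_{0,\mathrm{rad}}^\infty(\mathbb{R}^N)\). Moreover, \(\|\tilde u\|_{p,q}=\|u\|_{p,q,\Omega}\), because the gradient vanishes outside \(\Omega\).

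The map \(u\mapsto\tilde u\) is therefore a linear isometry from \(\bigl(C_{0,\mathrm{rad}}^\infty(\Omega),\|\cdot\|_{p,q,\Omega}\bigr)\) into \(E_{\mathrm{rad}}(\mathbb{R}^N)\). It extends by density to a linear isometry \(T:E_{\mathrm{rad}}(\Omega)\to E_{\mathrm{rad}}(\mathbb{R}^N)\). The point to check is that \(Tu\) restricted to \(\Omega\) equals \(u\) and \(Tu=0\) a.e. outside \(\Omega\). To see this, take \(u_n\to u\) in \(E_{\mathrm{rad}}(\Omega)\) with smooth \(u_n\). The \(\tilde u_n\) are Cauchy in \(E_{\mathrm{rad}}(\mathbb{R}^N)\). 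By the continuous embedding of Theorem \ref{embeddingRN}, for instance with \(s=q^*_\alpha\), they are also Cauchy in \(L^{s}(\mathbb{R}^N,|x|^\alpha)\). Passing to a subsequence converging a.e., the limit vanishes a.e. off \(\Omega\) and coincides with the limit of \(u_n\) in \(L^s(\Omega,|x|^\alpha)\).

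For continuity, take \(p_\alpha^*\le s\le q_\alpha^*\) and \(u\in E_{\mathrm{rad}}(\Omega)\). Theorem \ref{embeddingRN} gives
\[
\|u\|_{L^s(\Omega,|x|^\alpha)}=\|Tu\|_{L^s(\mathbb{R}^N,|x|^\alpha)}\le C\|Tu\|_{p,q}=C\|u\|_{p,q,\Omega}.
\]

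For compactness, take \(p_\alpha^*<s<q_\alpha^*\) and let \(u_n\rightharpoonup u\) in \(E_{\mathrm{rad}}(\Omega)\). The map \(T\) is bounded and linear, hence weak-to-weak continuous, so \(Tu_n\rightharpoonup Tu\) in \(E_{\mathrm{rad}}(\mathbb{R}^N)\). The compactness part of Theorem \ref{embeddingRN} then gives \(Tu_n\to Tu\) in \(L^s(\mathbb{R}^N,|x|^\alpha)\). This is exactly \(u_n\to u\) in \(L^s(\Omega,|x|^\alpha)\). Equivalently, bounded sets are mapped to relatively compact sets.

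The only mildly delicate step is the identification of the abstract completion \(E_{\mathrm{rad}}(\Omega)\) with a space of functions. In the exterior domain case, elements are defined only through the homogeneous norm, so a priori they are not functions. I would handle this as above, using the embedding of \(E_{\mathrm{rad}}(\mathbb{R}^N)\) into a weighted Lebesgue space to realize elements as measurable functions. Everything else is a direct transfer of Theorem \ref{embeddingRN}.
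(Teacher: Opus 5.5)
Your proposal is correct and follows the paper's route: extend by zero, note that this gives a norm-preserving map into \(E_{\mathrm{rad}}(\mathbb{R}^N)\), and transfer both continuity and compactness from Theorem \ref{embeddingRN}. The extra care you take, realizing elements of the completion as functions through the weighted \(L^{q_\alpha^*}\) bound and using weak-to-weak continuity of the bounded operator \(T\), makes explicit what the paper's one-line argument leaves implicit.
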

\begin{proof}
The result follows immediately by extending each function \(u\in E_{\mathrm{rad}}(\Omega)\) by zero outside \(\Omega\), observing that the extension belongs to \(E_{\mathrm{rad}}(\mathbb{R}^N)\) with the same \(\|\cdot\|_{p,q}\)-norm, and then applying Theorem \ref{embeddingRN}.
\end{proof}

\subsection{The weak formulation}

In this subsection we introduce the variational setting for the construction of nodal weak solutions of problem \eqref{eq1}.

Based on \cite{Bartsch-Willem} (see also \cite{Cao-Zhu,Fan-Zhao,Nehari}), our approach is to decompose $\mathbb{R}^{N}$ in radial domains and work on auxiliary problems. Precisely, for fixed $0 \leq r_{i} < r_{i + 1}\leq \infty$, we define the domain 
\begin{equation*}
    \Omega(r_{i},r_{i+1}) := \{x \in \mathbb{R}^{N} : r_{i} < \vert x \vert < r_{i+1}\}.
\end{equation*} 
Then, $\Omega(0,r_{i+1})$ is a ball (possibly $\mathbb{R}^{N})$, $\Omega(r_{i},r_{i+1})$ is an annulus (if $0<r_{i}<r_{i+1}<\infty$) and $\Omega(r_{i+1},\infty)$ is the exterior of a ball (if $r_{i+1}>0$). For simplicity, whenever the context is clear, we shall use the simplified notation \(\Omega\). We study the existence of positive and negative solutions for the Dirichlet problem
\begin{equation} \label{Sx}\tag{$\mathcal{P}_{\Omega}$}
\begin{cases}
- \Delta_pu - \Delta_qu =   |x|^\alpha |u|^{s-2}u, & \text{in } \Omega, \\
u  =  0, & \text{on } \partial \Omega.
\end{cases}
\end{equation}
The energy functional $I_{\Omega}:E_{\mathrm{rad}}(\Omega)\rightarrow\mathbb{R}$ associated with \eqref{Sx} is given by
\[ 
 I_\Omega(u)=\frac{1}{p}\|\nabla u\|_{L^p(\Omega)}^{p}+\frac{1}{q}\|\nabla u\|_{L^q(\Omega)}^{q}-\frac{1}{s}\int_{\Omega} |x|^{\alpha}|u|^{s}\,\mathrm{d}x. 
\]
In view of Corollary \ref{embedding-omega}, the functional \(I_\Omega\) is well defined and of class \(C^1\) on \(E_{\mathrm{rad}}(\Omega)\).
Furthermore, for every \(u,v\in E_{\mathrm{rad}}(\Omega)\), its derivative is given by
\begin{equation}\label{derivada-Gateaux}
    I_\Omega'(u)v  = \int_{\Omega}|\nabla u|^{p-2}\nabla u\nabla v\,\mathrm{d}x+\int_{\Omega}|\nabla u|^{q-2}\nabla u\nabla v\,\mathrm{d}x-\int_{\Omega}|x|^{\alpha}|u|^{s-2}u v\,\mathrm{d}x.
\end{equation}
We introduce the Nehari set
\begin{equation*}
    \mathcal{N}_\Omega = \{ u \in E_{\mathrm{rad}}(\Omega) \setminus \{0\} : I_\Omega'(u)u = 0 \},
\end{equation*}
and shall prove that the Nehari level
\[
c_{\mathcal{N}_\Omega}
:=
\inf_{u\in\mathcal{N}_\Omega} I_\Omega(u)
\]
is a critical value of $I_{\Omega}$.

\section{Nodal solutions via the Bartsch--Willem method}\label{sec3}

In this section, we adapt the method introduced by Bartsch and Willem \cite{Bartsch-Willem} in order to obtain nodal solutions for problem \eqref{eq1}.

\subsection{Existence of weak solutions}

In this subsection, we establish the existence of a nontrivial weak
solution of \eqref{Sx} by minimizing the energy functional
\(I_\Omega\) on the Nehari manifold \(\mathcal{N}_\Omega\).

We now prove that no sequence in \(\mathcal{N}_\Omega\) can converge
strongly to zero in \(E_{\mathrm{rad}}(\Omega)\).

\begin{lemma}\label{bounded-away-zero}
There exists $\beta>0$ such that
$
\|u\|_{p,q,\Omega}\geq \beta,$ for all $u\in \mathcal {N}_\Omega.
$
\end{lemma}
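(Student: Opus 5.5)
Take $u\in\mathcal N_\Omega$ and write $a:=\|\nabla u\|_{L^p(\Omega)}$, $b:=\|\nabla u\|_{L^q(\Omega)}$, so that $\|u\|_{p,q,\Omega}^2=a^2+b^2$. The Nehari identity $I_\Omega'(u)u=0$ gives
\[
a^p+b^q=\int_\Omega |x|^\alpha|u|^s\,\mathrm{d}x .
\]
The plan is to bound the right-hand side from above by a power of $a$ and $b$ that is strictly larger than the powers on the left. This will force $a$ and $b$ to stay away from zero, and the argument argues by contradiction.

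The key tool is the interpolation estimate from the proof of Theorem \ref{embeddingRN}, applied to the zero extension of $u$ as in Corollary \ref{embedding-omega}. Writing $1/s=\theta/p_\alpha^*+(1-\theta)/q_\alpha^*$ with $\theta\in(0,1)$ (possible since $p_\alpha^*<s<q_\alpha^*$), it gives
\[
\int_\Omega|x|^\alpha|u|^s\,\mathrm{d}x\le C\,a^{s\theta}b^{s(1-\theta)} .
\]
Alternatively one can use $\le C(a^2+b^2)^{s/2}$, but the split form gives better control in the contradiction step.

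Now suppose there is a sequence $u_n\in\mathcal N_\Omega$ with $\|u_n\|_{p,q,\Omega}\to0$, so $a_n,b_n\to0$. Since $a_n^p+b_n^q\ge \max\{a_n^p,b_n^q\}$, we split into two cases.

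\emph{Case $a_n^p\ge b_n^q$.} Then $b_n\le a_n^{p/q}$, and the estimate gives
\[
a_n^p\le C\,a_n^{s\theta+s(1-\theta)p/q}.
\]
The exponent satisfies $s\theta+s(1-\theta)p/q>p$ because $s>q>p$: indeed, $s\theta>p\theta$ and $s(1-\theta)p/q>p(1-\theta)$. Since $a_n>0$ (as $u_n\neq0$), dividing yields $1\le C a_n^{\varepsilon}$ with $\varepsilon>0$, which contradicts $a_n\to0$.

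\emph{Case $b_n^q\ge a_n^p$.} Then $a_n\le b_n^{q/p}$, and the estimate gives
\[
b_n^q\le C\,b_n^{s\theta q/p+s(1-\theta)} .
\]
The exponent exceeds $q$, since $s\theta q/p> s\theta>q\theta$ and $s(1-\theta)>q(1-\theta)$. This again contradicts $b_n\to 0$.

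Hence there is $\beta>0$ with $\|u\|_{p,q,\Omega}\ge\beta$ on $\mathcal N_\Omega$. The main point to check is that the constant $C$ in the interpolation estimate does not depend on $u$, and that the superlinearity condition $s>q$ (together with $s>p_\alpha^*$) is exactly what makes both exponents strictly larger than $p$ and $q$ respectively. Once that is verified, the rest is routine.
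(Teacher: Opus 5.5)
Your proof is correct and is essentially the paper's argument: both combine the Nehari identity with the weighted embedding of Corollary~\ref{embedding-omega}, and both use $s>q>p$ to rule out a sequence $u_n\in\mathcal N_\Omega$ with $\|u_n\|_{p,q,\Omega}\to0$. The paper uses your ``alternative'' sum bound $\int_\Omega|x|^\alpha|u|^s\,\mathrm{d}x\le C(\|\nabla u\|_{L^p(\Omega)}^s+\|\nabla u\|_{L^q(\Omega)}^s)$ and divides by $\|\nabla u\|_{L^p(\Omega)}^p+\|\nabla u\|_{L^q(\Omega)}^q$ to reach $1\le C(\|\nabla u_n\|_{L^p(\Omega)}^{s-p}+\|\nabla u_n\|_{L^q(\Omega)}^{s-q})$, which replaces your two-case split; note also that the exponent comparison needs only $s>q$, while $s>p_\alpha^*$ serves only to make the interpolation (embedding) step available.
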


\begin{proof}
If $u\in \mathcal {N}_\Omega$, then it follows from Corollary \ref{embedding-omega} that
\begin{equation}\label{n1}
\|\nabla u\|_{L^p(\Omega)}^{p}
+
\|\nabla u\|_{L^q(\Omega)}^{q}
=
\int_{\Omega}|x|^\alpha |u|^s\dx\leq C_1\|u\|_{p,q,\Omega}^s\leq C_2\left(\|\nabla u\|_{L^p(\Omega)}^{s} + \|\nabla u\|_{L^q(\Omega)}^{s}\right).
\end{equation}
Suppose, by contradiction, that there exists a sequence \((u_n)_n\subset\mathcal{N}_\Omega\) such that
$
\|u_n\|_{p,q,\Omega}\to 0
$
as $n\rightarrow\infty$. From \eqref{n1},
\[
1\leq C_2\left(\|\nabla u_n\|_{L^p(\Omega)}^{s-p} + \|\nabla u_n\|_{L^q(\Omega)}^{s-q}\right).
\]
Since $
\|u_n\|_{p,q,\Omega}\to 0,
$
as $n\rightarrow\infty$, then \(\|\nabla u_n\|_{L^p(\Omega)},\|\nabla u_n\|_{L^q(\Omega)}\to0\),
as $n\rightarrow\infty$. Consequently \(1\leq0\), because \(p<q<s\), which leads us to a contradiction. This proves the lemma.
\end{proof}

In the next result, we prove that \(\mathcal{N}_\Omega\) is nonempty
by showing that every nonzero element of \(E_{\mathrm{rad}}(\Omega)\)
can be projected onto \(\mathcal{N}_\Omega\).

\begin{lemma}\label{lemaproj} For each $u \in E_{\mathrm{rad}}(\Omega) \setminus \{0\},$ there exists a unique $t_u >0$ such that $t_uu \in \mathcal{N}_\Omega$ and
\[
I_\Omega(t_uu) = \max_{t\geq 0} I_\Omega(tu).
\]
Furthermore, if \(I'_\Omega(u)u\leq 0\), then \(0<t_u\leq 1\).
\end{lemma}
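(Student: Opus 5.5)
Fix $u\in E_{\mathrm{rad}}(\Omega)\setminus\{0\}$ and study the fibering map $h(t):=I_\Omega(tu)$ for $t\ge0$. Write $a:=\|\nabla u\|_{L^p(\Omega)}^p$, $b:=\|\nabla u\|_{L^q(\Omega)}^q$ and $c:=\int_\Omega|x|^\alpha|u|^s\dx$. Then
\[
h(t)=\frac{a}{p}t^p+\frac{b}{q}t^q-\frac{c}{s}t^s .
\]
Since $u\neq0$, we have $a>0$ and $b>0$. We also need $c>0$: $u$ is a nonzero function (the norm is a genuine norm on the completion), so $|u|>0$ on a set of positive measure, and $|x|^\alpha>0$ almost everywhere. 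Finiteness of $c$ comes from Corollary \ref{embedding-omega}.

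The key observation is that, for $t>0$,
\[
h'(t)=t^{-1}I'_\Omega(tu)(tu)=t^{s-1}\varphi(t),\qquad \varphi(t):=a\,t^{p-s}+b\,t^{q-s}-c .
\]
Hence $tu\in\mathcal N_\Omega$ with $t>0$ exactly when $\varphi(t)=0$. Because $p<q<s$, both exponents $p-s$ and $q-s$ are negative. So $\varphi$ is continuous and strictly decreasing on $(0,\infty)$, with $\varphi(t)\to+\infty$ as $t\to0^+$ and $\varphi(t)\to -c<0$ as $t\to\infty$. The intermediate value theorem and strict monotonicity then give a unique zero $t_u>0$. This proves existence and uniqueness of $t_u$ with $t_uu\in\mathcal N_\Omega$.

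For the maximality, note that $h'>0$ on $(0,t_u)$ and $h'<0$ on $(t_u,\infty)$, since $h'$ has the sign of $\varphi$. So $h$ increases on $[0,t_u]$ and decreases afterwards, and therefore $h(t_u)=\max_{t\ge0}h(t)$. Finally, if $I'_\Omega(u)u\le0$, then $\varphi(1)=a+b-c=I'_\Omega(u)u\le 0=\varphi(t_u)$. Since $\varphi$ is strictly decreasing, this forces $t_u\le1$.

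No step is a real obstacle. The only point needing care is justifying $c>0$ and $c<\infty$; this follows from the embedding and from $u\not\equiv0$ almost everywhere. The whole argument reduces to the monotonicity of $\varphi$, which relies only on $s>q>p$.
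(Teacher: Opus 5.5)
Your proof is correct and follows essentially the paper's argument. The paper likewise reduces everything to the strict monotonicity of a rescaled derivative of the fibering map (it uses $h_u'(t)/t^{q-1}$ where you use $h'(t)/t^{s-1}$), then reads off maximality and $t_u\le 1$ from the sign pattern of $h_u'$. Your normalization gives existence and uniqueness from a single function, and your explicit check that $c>0$ fills in a point the paper uses tacitly.
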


\begin{proof}
    Let \(u\in E_{\mathrm{rad}}(\Omega)\setminus\{0\}\) be fixed and define $h_{u}(t) = I_\Omega(tu)$ in $[0,\infty)$. By definition, $tu \in \mathcal{N}_\Omega$ if and only if $h_{u}'(t) = 0$. From \eqref{derivada-Gateaux},
\begin{equation} \label{eq:6art}
        h'_{u}(t) = I_\Omega'(tu)u = t^{p-1}\int_{\Omega}|\nabla u|^{p}\,\mathrm{d}x+t^{q-1}\int_{\Omega}|\nabla u|^{q}\,\mathrm{d}x-t^{s-1}\int_{\Omega}|x|^{\alpha}|u|^{s}\,\mathrm{d}x.
    \end{equation}
Note that $h'_{u}(0) = 0,$ $h'_{u}(t) > 0,$ for small $t > 0$ and $h'_{u}(t) < 0,$ for $t$ large. Therefore, there exists at least one \(t_u>0\) such that $h_u'(t_u)=0$.

To prove the uniqueness, observe that \eqref{eq:6art} yields
\[
\frac{h'_{u}(t)}{t^{q-1}}=t^{p-q}\int_{\Omega}|\nabla u|^{p}\,\mathrm{d}x+\int_{\Omega}|\nabla u|^{q}\,\mathrm{d}x-t^{s-q}\int_{\Omega}|x|^{\alpha}|u|^{s}\,\mathrm{d}x.
\]
    Since $p<q<s$, we conclude that the map $t\mapsto h'_{u}(t)/t^{q-1}$ is decreasing. Now, suppose that there exists $0<t_1<t_2$, such that $h_{u}'(t_1)=h_{u}'(t_2)=0$. Thus,  
$$
0=\frac{h'_{u}(t_2)}{t_2^{q-1}}<\frac{h'_{u}(t_1)}{t_{1}^{q-1}} = 0,
$$ 
which is a contradiction. Therefore, $\max_{t\geq0} h_{u}(t)$ is achieved at a unique $t = t_u>0$ and $t_u u \in \mathcal{N}_\Omega,$ because $h'_{u}(t_u) = 0.$

Finally, using that $h_{u}' > 0$ near the origin and it has a unique critical point, one concludes that $h_{u}' > 0$ in $(0,t_{u})$ and
$h_{u}' < 0$ in $(t_{u},\infty)$. Consequently, 
$t_{u}\in(0,1]$, whenever
$h_{u}'(1)=I_{\Omega}'(u)u\leq 0$, completing the proof.
\end{proof}

By Lemma \ref{lemaproj}, we can conclude that \(\mathcal{N}_\Omega \neq \emptyset\). Moreover, given \(u\in\mathcal{N}_\Omega\), one has
\begin{equation}\label{limitada}
    I_\Omega(u) = I_\Omega(u) - \frac{1}{s}I'_\Omega(u)u = \left(\frac{1}{p}-\frac{1}{s}\right)\int_\Omega|\nabla u|^p\dx + \left(\frac{1}{q}-\frac{1}{s}\right)\int_\Omega|\nabla u|^q\dx>0,
\end{equation}
where we used \(p<q<s\). This implies that the value
\begin{equation*}
    c_{\mathcal{N}_\Omega} = c_{\mathcal{N}_\Omega}(r_{i},r_{i+1}) := \inf_{u\in \mathcal{N}_\Omega}I_\Omega(u)
\end{equation*}
belongs to \([0,\infty)\).

We next prove that every minimizer of \(I_\Omega\) on the Nehari
manifold \(\mathcal{N}_\Omega\) is a critical point of \(I_\Omega\).

\begin{lemma}
    It holds that $\mathcal{N}_{\Omega}$ is a $C^{1}$-manifold. Moreover, if $v\in\mathcal{N}_\Omega$ is such that
\[
I_{\Omega}(v)
=
\inf_{u\in\mathcal{N}_{\Omega}} I_{\Omega}(u),
\]
then $I_{\Omega}'(v)=0$.
\end{lemma}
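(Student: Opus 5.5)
We write $\mathcal{N}_\Omega=\{u\neq0: G(u)=0\}$ with $G(u):=I_\Omega'(u)u$, and prove two things: $0$ is a regular value of $G$ on $\mathcal{N}_\Omega$, and a Lagrange multiplier argument forces the multiplier to vanish. Since $I_\Omega\in C^1$ and the map $u\mapsto\int_\Omega|\nabla u|^m\dx$ is $C^1$ on $E_{\mathrm{rad}}(\Omega)$ for $m\in\{p,q\}$, we have
\[
G(u)=\|\nabla u\|_{L^p(\Omega)}^p+\|\nabla u\|_{L^q(\Omega)}^q-\int_\Omega|x|^\alpha|u|^s\dx .
\]
The last term is $C^1$ by Corollary \ref{embedding-omega}, so $G$ is of class $C^1$. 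Its derivative is
\[
G'(u)v=p\int_\Omega|\nabla u|^{p-2}\nabla u\nabla v\dx+q\int_\Omega|\nabla u|^{q-2}\nabla u\nabla v\dx-s\int_\Omega|x|^\alpha|u|^{s-2}uv\dx .
\]

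\emph{The derivative of $G$ is nonzero on $\mathcal{N}_\Omega$.} Take $u\in\mathcal{N}_\Omega$ and test with $v=u$. Using $\int_\Omega|x|^\alpha|u|^s\dx=\|\nabla u\|_p^p+\|\nabla u\|_q^q$, we get
\[
G'(u)u=(p-s)\|\nabla u\|_{L^p(\Omega)}^p+(q-s)\|\nabla u\|_{L^q(\Omega)}^q .
\]
Both coefficients are negative because $p<q<s$. By Lemma \ref{bounded-away-zero}, $u\neq0$, so at least one gradient norm is positive. Hence $G'(u)u<0$, and in particular $G'(u)\neq0$. By the implicit function theorem (regular value theorem in Banach spaces), $\mathcal{N}_\Omega=G^{-1}(0)\setminus\{0\}$ is a $C^1$ submanifold of codimension one. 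It is relatively open in $G^{-1}(0)$ and closed away from $0$ by Lemma \ref{bounded-away-zero}.

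\emph{Minimizers are critical points.} Let $v$ be a minimizer of $I_\Omega$ on $\mathcal{N}_\Omega$. The Lagrange multiplier rule gives $\lambda\in\mathbb{R}$ with $I_\Omega'(v)=\lambda G'(v)$. Testing with $v$ gives $0=I_\Omega'(v)v=\lambda G'(v)v$. Since $G'(v)v<0$, this forces $\lambda=0$, so $I_\Omega'(v)=0$.

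The argument rests on the computation $G'(u)u<0$. The only technical point is verifying the $C^1$ regularity of $G$, in particular of the weighted term $\int_\Omega|x|^\alpha|u|^s\dx$. For this we use the continuous embedding of Corollary \ref{embedding-omega} with $p_\alpha^*<s<q_\alpha^*$ together with the standard Nemytskii-operator argument.
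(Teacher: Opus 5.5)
Your proposal is correct and follows the paper's proof essentially verbatim. You compute $G'(u)u=(p-s)\|\nabla u\|_{L^p(\Omega)}^p+(q-s)\|\nabla u\|_{L^q(\Omega)}^q<0$ on $\mathcal{N}_\Omega$ to get the $C^1$-manifold structure, then kill the Lagrange multiplier by testing with $v$. Your explicit removal of $u=0$ (where $G'(0)=0$, so $0$ is not literally a regular value of $G$ on the whole space) is slightly more precise than the paper's phrasing. Invoking Lemma \ref{bounded-away-zero} merely to get $u\neq 0$ is unnecessary, since that holds by the definition of $\mathcal{N}_\Omega$.
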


\begin{proof}
    Set
\(
J_{\Omega}(u):=I_{\Omega}'(u)u
\) and see that \(\mathcal{N}_{\Omega}=J_{\Omega}^{-1}(0)\). Note that
\[
J_\Omega(u)
=
\int_\Omega |\nabla u|^p\dx
+
\int_\Omega |\nabla u|^q\dx
-
\int_\Omega |x|^\alpha |u|^s\dx.
\]
Moreover,
\[
J_\Omega'(u)v
=
p\int_\Omega |\nabla u|^{p-2}\nabla u\nabla v\dx+
q\int_\Omega |\nabla u|^{q-2}\nabla u\nabla v\dx-
s\int_\Omega |x|^\alpha |u|^{s-2}uv\dx.
\]
In particular,
\[
J_\Omega'(u)u
=
p\int_\Omega |\nabla u|^p\dx
+
q\int_\Omega |\nabla u|^q\dx
-
s\int_\Omega |x|^\alpha |u|^s\dx.
\]
Given \(u\in\mathcal N_\Omega\), we have that
\[
J_\Omega'(u)u
=
(p-s)\int_\Omega |\nabla u|^p\dx
+
(q-s)\int_\Omega |\nabla u|^q\dx<0,
\]
because $p<q<s$. Therefore, \(0\) is a regular value of \(J_{\Omega}\) and hence
\(\mathcal{N}_{\Omega}\) is a \(C^{1}\)-manifold.

To prove the second part of result, suppose that 
\(v\in\mathcal{N}_{\Omega}\) is such that
\[
I_{\Omega}(v)
=
\inf_{u\in\mathcal{N}_{\Omega}} I_{\Omega}(u).
\]
Thus, applying the Lagrange multipliers theorem, there exists \(\mu\in\mathbb{R}\) such that
\[
I_{\Omega}'(v)=\mu J_{\Omega}'(v),
\]
Hence,
\[
0
=
I_{\Omega}'(v)v
=
\mu J_{\Omega}'(v)v,
\]
which implies that \(\mu=0\), because \(J_\Omega'(v)v<0\). As consequence, 
\(
I_{\Omega}'(v)=0,
\)
which finishes the proof.
\end{proof}

Finally, we show that the infimum of \(I_\Omega\) over
\(\mathcal{N}_\Omega\) is attained.

\begin{lemma}
    There exists \(u_0 \in \mathcal{N}_\Omega\) such that 
    \[
I_{\Omega}(u_0)
=c_{\mathcal{N}_\Omega}=
\inf_{u\in\mathcal{N}_{\Omega}} I_{\Omega}(u).
\]
\end{lemma}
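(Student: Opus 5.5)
Take a minimizing sequence \((u_n)_n\subset\mathcal N_\Omega\) with \(I_\Omega(u_n)\to c_{\mathcal N_\Omega}\), and pass to a weak limit using the compactness of Corollary \ref{embedding-omega}. The limit will be shown to be nontrivial, projected back onto \(\mathcal N_\Omega\) via Lemma \ref{lemaproj}, and shown to realize the infimum by weak lower semicontinuity.

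\emph{Step 1: boundedness.} By \eqref{limitada},
\[
I_\Omega(u_n)=\Big(\tfrac1p-\tfrac1s\Big)\|\nabla u_n\|_{L^p(\Omega)}^p+\Big(\tfrac1q-\tfrac1s\Big)\|\nabla u_n\|_{L^q(\Omega)}^q .
\]
Since \(I_\Omega(u_n)\) is bounded, both gradient norms are bounded, so \((u_n)_n\) is bounded in \(E_{\mathrm{rad}}(\Omega)\). This space is reflexive, being a closed subspace of a space isometric to a closed subspace of \(L^p\times L^q\) via \(u\mapsto(\nabla u,\nabla u)\). Hence, up to a subsequence, \(u_n\rightharpoonup u_0\) in \(E_{\mathrm{rad}}(\Omega)\). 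Since \(p_\alpha^*<s<q_\alpha^*\), Corollary \ref{embedding-omega} gives \(\int_\Omega|x|^\alpha|u_n|^s\to\int_\Omega|x|^\alpha|u_0|^s\).

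\emph{Step 2: \(u_0\neq0\).} This is the main point. From the Nehari identity and Lemma \ref{bounded-away-zero},
\[
\int_\Omega|x|^\alpha|u_n|^s\dx=\|\nabla u_n\|_{L^p(\Omega)}^p+\|\nabla u_n\|_{L^q(\Omega)}^q\ge \min\{\cdot\}>0 .
\]
The right-hand side is bounded below by a positive constant depending only on \(\beta\): if \(\|u_n\|_{p,q,\Omega}\ge\beta\), then one of the two gradient norms is at least \(\beta/\sqrt2\). Passing to the limit gives \(\int_\Omega|x|^\alpha|u_0|^s>0\), so \(u_0\neq0\).

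\emph{Step 3: projection and minimality.} By weak lower semicontinuity of the convex norms \(\|\nabla\cdot\|_{L^p}\) and \(\|\nabla\cdot\|_{L^q}\), together with the strong convergence of the weighted \(L^s\) term,
\[
I_\Omega'(u_0)u_0\le\liminf_n I_\Omega'(u_n)u_n=0 .
\]
Lemma \ref{lemaproj} then provides \(t_0:=t_{u_0}\in(0,1]\) with \(t_0u_0\in\mathcal N_\Omega\). Set \(g(t)=(\tfrac1p-\tfrac1s)t^p a+(\tfrac1q-\tfrac1s)t^q b\), where \(a,b\) are the \(p\)- and \(q\)-powers of the gradient norms of \(u_0\). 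Using \eqref{limitada} for \(t_0u_0\), monotonicity of \(g\) in \(t\), and lower semicontinuity again,
\[
c_{\mathcal N_\Omega}\le I_\Omega(t_0u_0)=g(t_0)\le g(1)\le\liminf_n\Big[\Big(\tfrac1p-\tfrac1s\Big)\|\nabla u_n\|_{L^p(\Omega)}^p+\Big(\tfrac1q-\tfrac1s\Big)\|\nabla u_n\|_{L^q(\Omega)}^q\Big]=c_{\mathcal N_\Omega}.
\]
Therefore \(t_0u_0\in\mathcal N_\Omega\) attains \(c_{\mathcal N_\Omega}\), and we rename it \(u_0\). (The chain of equalities also forces \(t_0=1\) and strong convergence, but this is not needed.)

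The only delicate issue is Step 2, the non-vanishing of the weak limit. It is resolved by combining Lemma \ref{bounded-away-zero} with the compactness available in the strict range \(p_\alpha^*<s<q_\alpha^*\). One must also make sure that the exterior-domain case \(\Omega(R,\infty)\) is covered, and it is, since Corollary \ref{embedding-omega} includes it.
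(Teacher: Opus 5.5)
Your proposal is correct and follows the paper's proof essentially step by step: boundedness from \eqref{limitada}, non-vanishing of the weak limit via the compact embedding plus Lemma \ref{bounded-away-zero} (you argue directly where the paper argues by contradiction), then projection with $t_{u_0}\in(0,1]$ followed by weak lower semicontinuity. Your use of the increasing function $g$ spells out the inequality $I_\Omega(t_{u_0}u_0)\le\liminf_n I_\Omega(u_n)$, which the paper states more tersely.
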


\begin{proof} Let $(u_n)_n \subset \mathcal{N}_\Omega$ be a minimizing sequence for
$c_{\mathcal{N}_\Omega}$. The estimate \eqref{limitada} yields
\[
c_{\mathcal{N}_\Omega}+o_n(1) \geq C_1\left(\int_\Omega|\nabla u_n|^p\dx+\int_\Omega|\nabla u_n|^q\dx\right),
\]
implying that the sequence \((u_n)_n\) is bounded in \(E_{\mathrm{rad}}(\Omega)\). So, up to a subsequence, $u_n \rightharpoonup u_0$
weakly in $E_{\mathrm{rad}}(\Omega)$.

We claim that \(u_0\neq 0\). Otherwise, from Corollary \ref{embedding-omega} it follows that
\[
0\leq\int_{\Omega} |\nabla u_n|^p\dx\leq\int_{\Omega} |\nabla u_n|^p\dx
+ \int_{\Omega} |\nabla u_n|^q\dx = \int_\Omega|x|^\alpha|u_n|^s\dx=o_n(1).
\]
Similarly,
\[
\int_{\Omega} |\nabla u_n|^q\dx=o_n(1),
\]
and hence \(\|u_n\|_{p,q,\Omega}=o_n(1)\). From Lemma \ref{bounded-away-zero},
\[
0<\beta\leq \|u_n\|_{p,q,\Omega}=o_n(1),
\]
which is a contradiction, and hence \(u_0\neq 0\).

From Lemma \ref{lemaproj}, there exists \(t_{u_0}>0\) such that \(t_{u_0}u_0\in\mathcal{N}_\Omega\). Since the function
$s\mapsto s^\theta$ is convex for $\theta>1$, we have that
\begin{equation}\label{lim-infimo}
    \|\nabla u_0\|_{L^\theta(\Omega)}^\theta \leq \liminf_{n\to\infty}\|\nabla u_n\|_{L^\theta(\Omega)}^\theta,
\end{equation}
for $\theta\in\{p,q\}$. This, together with Corollary \ref{embedding-omega} implies that 
\[
I_\Omega'(u_0)u_0
\leq
\liminf_{n\to\infty} I_\Omega'(u_n)u_n
=0.
\]
So, from Lemma \ref{lemaproj} we get \(0<t_{u_0}\leq 1\). This condition in \(t_{u_0}\), \eqref{limitada}, and \eqref{lim-infimo} infer that
\[
c_{\mathcal{N}_\Omega} \leq I_\Omega(t_{u_0}u_0) \leq \liminf_{n\to\infty} I_\Omega(u_n)=c_{\mathcal{N}_\Omega}.
\]
As consequence, \(t_{u_0}=1\), because if \(t_{u_0}<1\), we would have to \(c_{\mathcal{N}_\Omega}<c_{\mathcal{N}_\Omega}\), resulting in a contradiction. Thus,
\[
I_\Omega(u_0)=I_\Omega(t_{u_0}u_0)= c_{\mathcal{N}_\Omega}.
\]
The proof is completed.
\end{proof}

The following result is essential for the subsequent proof of a version
of the principle of symmetric criticality adapted to our setting.

\begin{lemma}\label{continuidade-termo-henon}
Assume that
$
1<p<q<N,$ $\alpha>-p$,
and
$
p_\alpha^*<s<q_\alpha^*.
$
Let \(\Omega\) be either \(\mathbb{R}^N\), a ball, an annulus, or an exterior domain centered at the origin. If \(u\in E_{\mathrm{rad}}(\Omega)\), then there exists a constant \(D=D(u)>0\) such that
\[
\left|
\int_\Omega |x|^\alpha |u|^{s-2}u w\,\mathrm{d}x
\right|
\leq
D\|w\|_{p,q,\Omega},\quad\forall \, w\in E(\Omega).
\]    
\end{lemma}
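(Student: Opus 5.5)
The point of the statement is that $u$ is radial while the test function $w\in E(\Omega)$ need not be. So we cannot use Theorem \ref{embeddingRN} or Corollary \ref{embedding-omega} for $w$. The plan is to let $u$ carry all the weighted integrability, through the pointwise radial decay estimate \eqref{radial}, and to control $w$ only by Hardy's inequality, which holds without symmetry. Extending $u$ and $w$ by zero outside $\Omega$, which preserves the norms, we may assume $\Omega=\mathbb{R}^N$. The idea is to split
\[
|x|^\alpha |u|^{s-1}|w| = \bigl(|x|^{\alpha+1}|u|^{s-1}\bigr)\,\frac{|w|}{|x|}
\]
and to use Hardy's inequality $\|w/|x|\|_{L^m(\mathbb{R}^N)}\le C\|\nabla w\|_{L^m(\mathbb{R}^N)}$, valid for all $w\in D^{1,m}(\mathbb{R}^N)$ with $m<N$. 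We take $m=q$ on $B_1$ and $m=p$ on $\mathbb{R}^N\setminus B_1$.

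\textbf{Estimate on $B_1$.} By H\"older's inequality,
\[
\int_{B_1}|x|^\alpha|u|^{s-1}|w|\,\mathrm{d}x \le \Bigl(\int_{B_1}|x|^{(\alpha+1)q'}|u|^{(s-1)q'}\,\mathrm{d}x\Bigr)^{1/q'} C\|\nabla w\|_{L^q}.
\]
By \eqref{radial} with $m=q$, we have $|u(x)|\le C\|u\|_{p,q}|x|^{-(N-q)/q}$. So the first factor is bounded by $C(u)$ times $\bigl(\int_{B_1}|x|^{\kappa_q q'}\,\mathrm{d}x\bigr)^{1/q'}$, where
\[
\kappa_q:=\alpha+1-\frac{(N-q)(s-1)}{q}.
\]
This integral is finite if and only if $\kappa_q q'>-N$, that is, $\kappa_q>-N+N/q$. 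Using $s<q_\alpha^*$ gives $(N-q)(s-1)/q<\alpha+N-(N-q)/q$. Hence $\kappa_q>1-N+(N-q)/q=-N+N/q$, which is exactly the required inequality.

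\textbf{Estimate on $\mathbb{R}^N\setminus B_1$.} Here we use \eqref{radial} with $m=p$ and Hardy with $m=p$. The integrability condition at infinity becomes $\kappa_p p'<-N$, where
\[
\kappa_p:=\alpha+1-\frac{(N-p)(s-1)}{p}.
\]
This condition is equivalent to $(N-p)(s-1)/p>\alpha+N-(N-p)/p$, which is precisely $s>p_\alpha^*$.

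\textbf{Conclusion.} Adding the two regions gives
\[
\Bigl|\int_\Omega |x|^\alpha|u|^{s-2}uw\,\mathrm{d}x\Bigr|\le D\bigl(\|\nabla w\|_{L^p}+\|\nabla w\|_{L^q}\bigr)\le \sqrt2\,D\|w\|_{p,q,\Omega},
\]
with $D$ depending on $u$ only through $\|u\|_{p,q}^{s-1}$ and on $N,p,q,\alpha,s$.

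The step I expect to be the main obstacle is choosing a splitting of the weight that makes both integrability conditions hold exactly in the range $p_\alpha^*<s<q_\alpha^*$. The Hardy-based choice above does this, with strict inequalities at both ends. A secondary point needs checking: \eqref{radial} applies to $u\in E_{\mathrm{rad}}$ with both exponents $p$ and $q$. This holds because $E_{\mathrm{rad}}(\mathbb{R}^N)\subset D^{1,p}_{\mathrm{rad}}\cap D^{1,q}_{\mathrm{rad}}$, by density of $C^\infty_{0,\mathrm{rad}}$.
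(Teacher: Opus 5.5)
Your proposal is correct and follows essentially the same route as the paper. The paper also writes $|x|^\alpha|u|^{s-1}|w| = |x|^{\alpha+1}|u|^{s-1}\cdot |w|/|x|$ and applies the radial decay estimate \eqref{radial} with $m=q$ on $B_1$ and $m=p$ on $\mathbb{R}^N\setminus B_1$, then H\"older and Hardy, with the resulting integrability conditions equivalent to $s<q_\alpha^*$ and $s>p_\alpha^*$ respectively, and finally extends by zero to handle the other domains. The only difference is that the paper first works on $B_1\setminus B_\varepsilon$ and lets $\varepsilon\to0^+$, a step your direct estimate on $B_1$ shows is unnecessary.
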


\begin{proof}
    We first prove the result for \(\Omega=\mathbb{R}^N\). Given \(u\in E_{\mathrm{rad}}(\mathbb{R}^N)\) and \(w\in E(\mathbb{R}^N)\), we write
    \begin{equation}\label{integral-soma}
\int_{\mathbb{R}^N}|x|^\alpha |u|^{s-1}|w|\,\mathrm{d}x
=
\int_{B_1}|x|^\alpha |u|^{s-1}|w|\,\mathrm{d}x
+
\int_{\mathbb{R}^N\setminus B_1}|x|^\alpha |u|^{s-1}|w|\,\mathrm{d}x.        
    \end{equation}
Since \(E(\mathbb{R}^N)\subset D^{1,m}(\mathbb{R}^N)\), for \(m\in\{p,q\}\), we can apply \eqref{radial} (replacing \(p\) by \(q\)), to get
    \[
\int_{B_1\setminus B_\varepsilon}|x|^\alpha |u|^{s-1}|w|\,\mathrm{d}x \leq C_1\|u\|_{p,q}^{s-1}\int_{B_1}|x|^{\alpha+1-(s-1)(N-q)/q}\frac{|w|}{|x|}\dx,
\]
where \(0<\varepsilon<1\). Next, we use H\"older's inequality together with the following Hardy's inequality
\[
\left(
\int_{\mathbb{R}^N}
\frac{|w|^q}{|x|^q}\,\mathrm{d}x
\right)^{1/q}
\leq
C_2
\|\nabla w\|_{L^q(\mathbb{R}^N)}
\]
to obtain
\[
\int_{B_1\setminus B_\varepsilon}|x|^\alpha |u|^{s-1}|w|\,\mathrm{d}x \leq C_3\|u\|_{p,q}^{s-1}\left(\int_{B_1}|x|^\xi \dx\right)^{(q-1)/q}\|w\|_{p,q},
\]
where
\[
\xi:=\frac{q}{q-1}\left(\alpha+1-(s-1)\frac{(N-q)}{q}\right).
\]
Noting that
\[
\begin{aligned}
    \xi+N>0 \quad &\Longleftrightarrow \quad q\alpha + q -(s-1)(N-q)+N(q-1)>0
    \\
    & \Longleftrightarrow \quad q(N+\alpha) -(N-q) >(s-1)(N-q)
    \\
    & \Longleftrightarrow \quad q_\alpha^* -1 >s-1,
\end{aligned}
\]
it holds that \(|x|^\xi\in L^1(B_1)\). So,
\[
\int_{B_1\setminus B_\varepsilon}|x|^\alpha |u|^{s-1}|w|\,\mathrm{d}x \leq C_4\|w\|_{p,q},
\]
for some \(C_4=C_4(u)>0\). Taking \(\varepsilon\to 0^+\), one concludes that
\begin{equation}\label{integral-B1}
    \int_{B_1}|x|^\alpha |u|^{s-1}|w|\,\mathrm{d}x \leq C_4\|w\|_{p,q}.
\end{equation}

Now we estimate the integral over \(\mathbb{R}^N\setminus B_1\). Again by \eqref{radial},
\[
\int_{\mathbb{R}^N\setminus B_1}
|x|^\alpha |u|^{s-1}|w|\,\mathrm{d}x \leq C_5\|u\|_{p,q}^{s-1}\int_{\mathbb{R}^N\setminus B_1}
|x|^{\alpha+1-(s-1)(N-p)/p}\frac{|w|}{|x|}\dx.
\]
Following the same argument as above and using that \(s>p_\alpha^*\), we obtain
\[
\int_{\mathbb{R}^N\setminus B_1}
|x|^\alpha |u|^{s-1}|w|\,\mathrm{d}x \leq C_6\|w\|_{p,q},
\]
for some \(C_6=C_6(u)>0\). Combining this inequality with \eqref{integral-soma} and \eqref{integral-B1}, we have that
\[
\left|
\int_{\mathbb{R}^N} |x|^\alpha |u|^{s-2}u w\,\mathrm{d}x
\right|
\leq
(C_4+C_6)\|w\|_{p,q},\quad\forall \, w\in E(\mathbb{R}^N).
\]

Finally, if \(\Omega\) is a ball, an annulus or an exterior domain centered at the origin, we extend \(u\) and \(w\) by zero outside \(\Omega\) and the proof is finalized.
\end{proof}

Since we seek radial solutions, we look for critical points of
\(I_\Omega\) in the radial space \(E_{\mathrm{rad}}(\Omega)\).
To ensure that these critical points are weak solutions of
\eqref{Sx}, we establish the following version of the
principle of symmetric criticality, based on \cite{Palais}.

\begin{proposition}\label{palais}
    Let \(\Omega\) be either \(\mathbb{R}^N\), a ball, an annulus, or an exterior domain centered at the origin. If \(u\in E_{\mathrm{rad}}(\Omega)\) is a critical point of \(I_\Omega\), then \(u\) is a weak solution of \eqref{Sx}.
\end{proposition}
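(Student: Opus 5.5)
The plan is to follow Palais' principle of symmetric criticality for a linear isometric group action. Let $G=O(N)$ act on $E(\Omega)$ by $(g\cdot w)(x):=w(g^{-1}x)$. Since $\Omega$ is invariant under orthogonal maps and $|\nabla(g\cdot w)(x)|=|\nabla w(g^{-1}x)|$, this action preserves $\|\cdot\|_{p,q,\Omega}$. The first fact to record is that $E_{\mathrm{rad}}(\Omega)$ is exactly the fixed point set $\mathrm{Fix}(G)$. Indeed, $C^\infty_{0,\mathrm{rad}}(\Omega)$ is fixed, and the fixed set is closed. Conversely, if $w$ is fixed, then averaging a smooth approximating sequence over $G$ with Haar measure produces radial smooth approximants, because averaging is a norm-nonexpansive projection.

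Next, fix a critical point $u\in E_{\mathrm{rad}}(\Omega)$ and consider the linear map
\[
T(w):=\int_\Omega |\nabla u|^{p-2}\nabla u\nabla w\dx+\int_\Omega |\nabla u|^{q-2}\nabla u\nabla w\dx-\int_\Omega |x|^\alpha|u|^{s-2}uw\dx,\qquad w\in E(\Omega).
\]
The first step is to show that $T$ is a bounded linear functional on $E(\Omega)$. The gradient terms are controlled by Hölder's inequality through $\|\nabla u\|_{L^p}^{p-1}\|\nabla w\|_{L^p}+\|\nabla u\|_{L^q}^{q-1}\|\nabla w\|_{L^q}$. The weighted term is exactly what Lemma \ref{continuidade-termo-henon} controls for non-radial $w$. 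This is the main obstacle, since Corollary \ref{embedding-omega} only applies to radial functions, and the lemma has already resolved it.

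The second step is invariance: $T(g\cdot w)=T(w)$ for all $g\in G$. To see this, change variables $x=gy$, use that $u$ and $|x|^\alpha$ are radial, and note that $\nabla u(gy)=g\nabla u(y)$, so the inner products are preserved. Now let $P:E(\Omega)\to E_{\mathrm{rad}}(\Omega)$ be the averaging map $Pw:=\int_G g\cdot w\,\mathrm{d}g$, taken as a Bochner integral. By linearity, continuity and invariance, $T(Pw)=T(w)$. Since $Pw$ is radial and $u$ is a critical point, $T(Pw)=I_\Omega'(u)(Pw)=0$. Hence $T(w)=0$ for every $w\in E(\Omega)$, which is the weak formulation of \eqref{Sx}.

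To make the averaging argument fully rigorous, there are two points to check. First, $g\mapsto g\cdot w$ must be continuous from $G$ into $E(\Omega)$. This is true for $w\in C_0^\infty$, and it extends to all of $E(\Omega)$ by density together with the isometry property. Second, $Pw$ must be radial, which follows from the invariance of Haar measure. A simpler alternative that avoids Bochner integrals is to prove $T(w)=0$ first for $w\in C_0^\infty(\Omega)$ and then extend by density using the boundedness of $T$. For smooth $w$, the average $Pw$ is a smooth radial function, obtained as an ordinary parametric integral, and the identity $T(Pw)=T(w)$ follows from Fubini's theorem.
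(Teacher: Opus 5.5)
Your proof is correct, but it turns the $O(N)$-invariance of $T$ into $T\equiv 0$ differently from the paper. The two arguments share the same first two steps:
\begin{itemize}
\item boundedness of $T$ on all of $E(\Omega)$, via H\"older and Lemma \ref{continuidade-termo-henon};
\item the invariance $T(g\cdot w)=T(w)$.
\end{itemize}

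The paper then argues geometrically. Since $E(\Omega)$ is uniformly convex, Willem's lemma gives a unique unit vector $\bar u$ at which $T$ attains its norm. Invariance and uniqueness force $g\bar u=\bar u$ for all $g$, so $\bar u$ is radial, and therefore $\|T\|=T(\bar u)=I_\Omega'(u)\bar u=0$.

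You instead average over the compact group with Haar measure.

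\emph{What your route buys.}
\begin{itemize}
\item It does not need uniform convexity of $(E(\Omega),\|\cdot\|_{p,q,\Omega})$, which the paper asserts without proof.
\item It does not need to identify the $O(N)$-fixed elements of $E(\Omega)$ with $E_{\mathrm{rad}}(\Omega)=\overline{C^\infty_{0,\mathrm{rad}}(\Omega)}$. The paper uses this silently when it concludes $\bar u\in E_{\mathrm{rad}}(\Omega)$. Your first paragraph actually proves this fact. Your smooth-test-function variant avoids it entirely, since for $w\in C_0^\infty(\Omega)$ one has $Pw\in C^\infty_{0,\mathrm{rad}}(\Omega)$ directly, its support lying in the compact set $O(N)\,\mathrm{supp}\,w\subset\Omega$.
\item It avoids a small wrinkle in the paper. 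The paper's uniqueness claim only makes sense when $T\neq 0$: if $T=0$, every unit vector is a maximizer. So the paper's argument should really be read as a proof by contradiction.
\end{itemize}

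\emph{What the paper's route buys.} It needs no integration over the group, and it does not use compactness of $O(N)$. The same norm-attaining argument works for isometric actions of noncompact groups on uniformly convex (or reflexive, strictly convex) spaces, where Haar averaging is unavailable.

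For the compact group $O(N)$, your averaging argument is the more elementary and self-contained of the two.
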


\begin{proof} Fix $u\in E_{\mathrm{rad}}(\Omega)$ and define the linear functional
\(
T_u:E(\Omega)\rightarrow \mathbb{R}
\)
by
\[
T_u(w)
:=
\int_{\Omega}|\nabla u|^{p-2}\nabla u\nabla w\dx
+
\int_{\Omega}|\nabla u|^{q-2}\nabla u\nabla w\dx
-
\int_{\Omega}|x|^\alpha |u|^{s-2}uw\dx.
\]
We first verify that $T_u$ is continuous on $E(\Omega)$. Indeed, by H\"older's inequality, we have
\[
\left|
\int_{\Omega}|\nabla u|^{p-2}\nabla u\nabla w\dx
\right|
\leq
\|\nabla u\|_{L^p(\Omega)}^{p-1}
\|\nabla w\|_{L^p(\Omega)}
\leq
C_1\|w\|_{p,q,\Omega}
\]
and
\[
\left|
\int_{\Omega}
|\nabla u|^{q-2}\nabla u\nabla w\dx
\right|
\leq
\|\nabla u\|_{L^q(\Omega)}^{q-1}
\|\nabla w\|_{L^q(\Omega)}
\leq
C_2\|w\|_{p,q,\Omega}.
\]
These estimates combined with Lemma \ref{continuidade-termo-henon} infer that
\[
|T_u(w)| \leq C_3\|w\|_{p,q,\Omega}, \quad \forall \, w\in E(\Omega).
\]
So, $T_u$ is continuous on $E(\Omega)$.
Since $E(\Omega)$ is uniformly convex, in view of \cite[Lemme 14.5]{Willem2}, there exists a unique $\bar{u}\in E(\Omega)$, with
\begin{equation}\label{unicidade-norma}
\|\bar{u}\|_{p,q,\Omega}=1 \quad \mbox{and} \quad T_u(\bar{u})
=
\max_{\|v\|_{p,q,\Omega}=1}
|T_u(v)|.
\end{equation}
Recall that, for $g\in\mathcal{O}(N)$ (\(\mathcal{O}(N)\) denotes the group of orthogonal transformations in $\mathbb{R}^N$), the action of $g$ on a function $v$ is defined by
\[
(gv)(x):=v(g^{-1}x).
\]
For every $g\in\mathcal{O}(N)$, the invariance under orthogonal transformations, together with
\[
|g^{-1}x|=|x|
\quad\text{and}\quad
g^{-1}(\Omega)=\Omega,
\]
shows, after the change of variables $y=g^{-1}x$, that
\[
T_u(g\bar{u})=T_u(\bar{u}).
\]
The same argument yields
\[
\|g\bar{u}\|_{p,q,\Omega}=\|\bar{u}\|_{p,q,\Omega}=1.
\]
Hence, the uniqueness property established in \eqref{unicidade-norma} implies
\[
g\bar{u}=\bar{u},
\quad \forall \, g\in\mathcal{O}(N),
\]
and so \(\bar{u}\in E_{\mathrm{rad}}(\Omega)\). Since $u$ is a critical point of the restriction of $I_\Omega$ to
$E_{\mathrm{rad}}(\Omega)$, it follows that
$
T_u(\bar{u})=0.
$
Thus,
\[
\max_{\|v\|_{p,q,\Omega}=1}|T_u(v)|
=
T_u(\bar{u})
=
0.
\]

Now, given any $w\in E(\Omega)$, we may write
\[
T_u(w)
=
\|w\|_{p,q,\Omega}
T_u\left(\frac{w}{\|w\|_{p,q,\Omega}}\right)
=
0.
\]
Therefore,
\[
\int_{\Omega}|\nabla u|^{p-2}\nabla u\nabla w\dx
+
\int_{\Omega}|\nabla u|^{q-2}\nabla u\nabla w\dx
-
\int_{\Omega}|x|^\alpha |u|^{s-2}uw\dx=0, \quad \forall \, w \in E(\Omega),
\]
 which proves that $u$ is a weak solution of \eqref{Sx}.
\end{proof}

\subsection{Regularity}

In this subsection, we establish regularity and positivity results
for nontrivial nonnegative weak solutions.

The first result establishes the boundedness of weak solutions.

\begin{lemma}\label{lema-iteracao-de-moser}
    If \(u\in E_\mathrm{rad}(\mathbb{R}^N)\) is a nonnegative weak solution of \eqref{eq1}, then \(u\in L^\infty(\mathbb{R}^N)\).
\end{lemma}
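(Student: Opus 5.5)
The plan is to combine the radial decay estimate \eqref{radial} with a local boundedness argument near the origin. Away from the origin boundedness is immediate. By \eqref{radial} with \(m=q\), every \(u\in E_{\mathrm{rad}}(\mathbb{R}^N)\) satisfies
\[
|u(x)|\le C_1|x|^{-\frac{N-q}{q}}\|u\|_{p,q},\qquad x\neq 0,
\]
so \(u\in L^\infty(\mathbb{R}^N\setminus B_\rho)\) for every \(\rho>0\). It therefore remains to bound \(u\) on a small ball \(B_\rho\). There the weight \(|x|^\alpha\) may be singular (when \(\alpha<0\)), and the radial bound alone blows up.

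Near the origin I would run a Moser iteration on the weak formulation. For \(L>0\) and \(\beta\ge 1\), take the test function \(\varphi=u\,u_L^{q(\beta-1)}\), where \(u_L=\min\{u,L\}\). This belongs to \(E(\mathbb{R}^N)\), and since \(u\ge0\) all terms are nonnegative. Dropping the \(p\)-Laplacian contribution, which is nonnegative because \(\nabla u\cdot\nabla\varphi\ge0\), gives the standard estimate
\[
\int |\nabla (u u_L^{\beta-1})|^q\,\mathrm{d}x \le C\beta^{q}\int |x|^\alpha u^{s-q}\,(u u_L^{\beta-1})^{q}\,\mathrm{d}x.
\]
Write \(a(x):=|x|^\alpha u^{s-q}\). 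The key observation is that \(a\in L^{t}(\mathbb{R}^N)\) for some \(t>N/q\), at least locally. To see this, use Lemma \ref{lemma_important} with \(m=q\), the bound \(u\le C|x|^{-(N-q)/q}\), and the fact that \(s<q_\alpha^*\). Note that \(a\le C|x|^{\alpha-(s-q)(N-q)/q}\), and its exponent exceeds \(-q\) precisely because \(s<q_\alpha^*\). Hence \(a\in L^t(B_1)\) for some \(t>N/q\). Outside \(B_1\), \(a\) is bounded when \(s\) is large, or in general is controlled by the decay of \(u\).

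With \(a\in L^{t}\) and \(t>N/q\), apply Hölder's inequality and Sobolev's inequality \(D^{1,q}\hookrightarrow L^{q^*}\) to the right-hand side:
\[
\int a\, w^q\,\mathrm{d}x\le \|a\|_{L^t}\,\|w\|_{L^{qt'}}^q,
\]
where \(w=u u_L^{\beta-1}\) and \(qt'<q^*\). This yields the reverse inequality
\[
\|u\|_{L^{\beta q^*}}\le (C\beta)^{1/\beta}\|u\|_{L^{\beta q t'}},
\]
after letting \(L\to\infty\) via Fatou's lemma. Iterate with \(\beta_k=(q^*/(qt'))^k\). The product \(\prod (C\beta_k)^{1/\beta_k}\) converges, giving \(\|u\|_{L^\infty}\le C\|u\|_{L^{q^*}}<\infty\).

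I expect the main obstacle to be justifying the integrability of \(a=|x|^\alpha u^{s-q}\) near the origin, i.e., showing it lies in \(L^t\) with \(t>N/q\). If the exponent computation is only borderline, I would first use the \(L^{q_\alpha^*}(|x|^\alpha)\) integrability, combined with a localized iteration in the weighted space, to improve the integrability of \(u\) before estimating \(a\). The other technical point is checking that the truncated test functions lie in \(E(\mathbb{R}^N)\), which follows since truncation preserves the gradient norms.
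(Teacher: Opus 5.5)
Your route differs from the paper's. You iterate in unweighted Lebesgue spaces and treat $a=|x|^\alpha u^{s-q}$ as a potential, whereas the paper iterates in the weighted spaces. Your key computation is correct, and it is not borderline: on $B_1$ the bound $u\le C|x|^{-(N-q)/q}$ gives $a\le C|x|^{e}$ with $e=\alpha-(s-q)\frac{N-q}{q}$. The condition $e>-q$ is exactly $s<q_\alpha^*$, so $a\in L^t(B_1)$ for some $t>N/q$. Your choice of a global test function $u\,u_L^{q(\beta-1)}$ is also the right one, since it keeps the $p$-term signed; a cutoff $\eta^q$ would create an unsigned cross term $\eta^{q-1}|\nabla u|^{p-2}\nabla u\cdot\nabla\eta$. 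The gap is in closing the iteration. The step $\int a\,w^q\le \|a\|_{L^t}\|w\|_{L^{qt'}}^q$ over $\mathbb{R}^N$ needs $a\in L^t(\mathbb{R}^N)$, and nothing you know gives this. At infinity the only a priori information is $u\le C|x|^{-(N-p)/p}$, hence $a\le C|x|^{\alpha-(s-q)(N-p)/p}$. This exponent can be positive under the hypotheses: for instance $N=10$, $p=3/2$, $q=9$, $\alpha=10$, $s=10$ gives $13/3$. Even where $a$ is bounded, boundedness is not $L^t$-integrability on an unbounded set. So your sentence that outside $B_1$, $a$ ``is bounded when $s$ is large, or in general is controlled by the decay of $u$'' does not deliver the inequality you use, and the global conclusion $\|u\|_{L^\infty}\le C\|u\|_{L^{q^*}}$ is unsupported.

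The repair uses the observation you started with. Keep the global test function but split the right-hand side. On $\mathbb{R}^N\setminus B_1$ you have $u\le M$ by \eqref{radial}, so $|x|^\alpha u^{s-q}(u\,u_L^{\beta-1})^q\le M^{q(\beta-1)}|x|^\alpha u^s$, and its integral is finite by Theorem \ref{embeddingRN}. On $B_1$, apply Hölder with $\|a\|_{L^t(B_1)}$. Restricting the Sobolev norm on the left to $B_1$ and letting $L\to\infty$ gives $\|u\|_{L^{\beta q^*}(B_1)}\le (C\beta)^{1/\beta}\max\{\|u\|_{L^{\beta q t'}(B_1)},M\}$. Iterating with $\beta=\chi^k$, $\chi=q^*/(qt')>1$, yields $u\in L^\infty(B_1)$, and \eqref{radial} does the rest. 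By comparison, the paper needs no splitting. It uses the weighted inequality of Lemma \ref{lemma_important} (with $m=q$) in place of the unweighted Sobolev inequality, and applies Hölder with respect to $|x|^\alpha\,\mathrm{d}x$. There it places $u^{s-q}$ in $L^{t_1}(\mathbb{R}^N,|x|^\alpha)$ with $t_1(s-q)\in(s,q_\alpha^*)$, which is globally finite by Theorem \ref{embeddingRN}. The weight is thus absorbed into the measure and no potential-integrability analysis is needed, at the cost of a final step converting weighted $L^r$ bounds into $L^\infty(B_R)$. Your version, once repaired, needs only the classical Sobolev inequality and the standard $t>N/q$ criterion. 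It also shows transparently that $s<q_\alpha^*$ is precisely the integrability condition for the singular potential at the origin.
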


\begin{proof}
For each $m \in \mathbb{N}$ and $\gamma>1$, we consider the sets
\[
A_m := \left\{x \in \mathbb{R}^N : u^{\gamma-1}(x) \leq m\right\} \quad \mbox{and} \quad B_m := \mathbb{R}^N \setminus A_m.
\]
Next, define
\[
u_m :=
\begin{cases}
u^{q(\gamma-1)+1}, & \text{in } A_m,\\
m^q u, & \text{in } B_m,
\end{cases}
\]
and see that
\[
\nabla u_m =
\begin{cases}
(q\gamma-q+1)u^{q(\gamma-1)}\nabla u,
& \text{in } A_m,\\
m^q\nabla u,
& \text{in } B_m.
\end{cases}
\]

We claim that $u_m \in E_\mathrm{rad}(\mathbb{R}^N)$, for any $m \in \mathbb{N}$. In fact, we can estimate
\[
\begin{aligned}
\int_{\mathbb{R}^N} |\nabla u_m|^q\,\mathrm{d}x
&=
(q\gamma-q+1)^q
\int_{A_m}u^{q^2(\gamma-1)}|\nabla u|^q\,\mathrm{d}x
+
m^{q^2}\int_{B_m}|\nabla u|^q\,\mathrm{d}x
\\
&\leq
(q\gamma-q+1)^qm^{q^2}
\int_{A_m}|\nabla u|^q\,\mathrm{d}x
+
m^{q^2}\int_{B_m}|\nabla u|^q\,\mathrm{d}x
<\infty.
\end{aligned}
\]
Similarly, $|\nabla u_m| \in L^p(\mathbb{R}^N)$. So, $u_m \in E_\mathrm{rad}(\mathbb{R}^N)$ and the claim is verified.

Now, using that $u$ is solution of \eqref{eq1}, one concludes that
\begin{equation}\label{u-solucao-um}
    \int_{\mathbb{R}^N}
|\nabla u|^{p-2}\nabla u\nabla u_m\,\mathrm{d}x
+
\int_{\mathbb{R}^N}
|\nabla u|^{q-2}\nabla u\nabla u_m\,\mathrm{d}x
=
\int_{\mathbb{R}^N}|x|^\alpha |u|^{s-2}uu_m\,\mathrm{d}x.
\end{equation}

Next, define
\[
z_m :=
\begin{cases}
u^\gamma, & \text{in } A_m,\\
mu, & \text{in } B_m.
\end{cases}
\]
Consequently,
\[
\nabla z_m =
\begin{cases}
\gamma u^{\gamma-1}\nabla u, & \text{in } A_m,\\
m\nabla u, & \text{in } B_m.
\end{cases}
\]
Using the same idea as that applied to \(u_m\), one has \(z_m\in E_\mathrm{rad}(\mathbb{R}^N)\), for any \(m\in\mathbb{N}\). See that
\[
\int_{\mathbb{R}^N}|\nabla z_m|^q\dx
=
\gamma^q\int_{A_m}u^{q(\gamma-1)}|\nabla u|^q\dx
+
m^q\int_{B_m}|\nabla u|^q\dx
\]
and
\begin{equation}\label{gradiente-u-um}
    \int_{\mathbb{R}^N}|\nabla u|^{q-2}\nabla u\nabla u_m\dx
=
(q\gamma-q+1)
\int_{A_m}u^{q(\gamma-1)}|\nabla u|^q\dx
+
m^q\int_{B_m}|\nabla u|^q\dx.
\end{equation}
So,
\[
\int_{\mathbb{R}^N}|\nabla z_m|^q\dx
-
\int_{\mathbb{R}^N}|\nabla u|^{q-2}\nabla u\nabla u_m\dx
=
(\gamma^q-q\gamma+q-1)
\int_{A_m}u^{q(\gamma-1)}|\nabla u|^q\dx.
\]
By \eqref{gradiente-u-um} we have that
\[
\int_{A_m} u^{q(\gamma-1)}|\nabla u|^q\dx
\leq
\frac{1}{(q\gamma-q+1)}
\int_{\mathbb{R}^N}|\nabla u|^{q-2}\nabla u\nabla u_m\dx.
\]
Hence,
\[
\begin{aligned}
\int_{\mathbb{R}^N}|\nabla z_m|^q\dx
&\leq
\frac{\gamma^q-q\gamma+q-1}{q\gamma-q+1}
\int_{\mathbb{R}^N}|\nabla u|^{q-2}\nabla u\nabla u_m\dx
+
\int_{\mathbb{R}^N}|\nabla u|^{q-2}\nabla u\nabla u_m\dx\\
&=
\frac{\gamma^q}{q\gamma-q+1}
\int_{\mathbb{R}^N}|\nabla u|^{q-2}\nabla u\nabla u_m\dx\leq
\gamma^q
\int_{\mathbb{R}^N}|\nabla u|^{q-2}\nabla u\nabla u_m\dx.
\end{aligned}
\]
Combining this inequality with \eqref{u-solucao-um}, one deduces
\[
\int_{\mathbb{R}^N}|\nabla z_m|^q\dx
\leq
\gamma^q
\left(
-\int_{\mathbb{R}^N}|\nabla u|^{p-2}\nabla u\nabla u_m\dx
+
\int_{\mathbb{R}^N}|x|^\alpha|u|^{s-2}uu_m\dx
\right).
\]
Since
\[
-\int_{\mathbb{R}^N}|\nabla u|^{p-2}\nabla u\nabla u_m\dx
=
-(q\gamma-q+1)
\int_{A_m}u^{q(\gamma-1)}|\nabla u|^{p}\dx
-
m^q\int_{B_m}|\nabla u|^p\dx
\leq 0,
\]
we can conclude that
\[
\int_{\mathbb{R}^N}|\nabla z_m|^q\dx
\leq \gamma^q\int_{\mathbb{R}^N}|x|^\alpha|u|^{s-2}uu_m\dx.
\]
Since \(u_m\leq u^{q\gamma-q+1}\) in \(\mathbb{R}^N\), we can estimate
\[
\int_{\mathbb{R}^N}|\nabla z_m|^q\dx
\leq \gamma^q\int_{\mathbb{R}^N}|x|^\alpha u^{s+q\gamma-q}\dx=\gamma^q\int_{\mathbb{R}^N}|x|^\alpha u^{s-q}u^{q\gamma}\dx.
\]
Applying Hölder's inequality with \(1/t_1+1/t_2=1\) and 
\[
\frac{s}{s-q}<t_1<\frac{q_\alpha^*}{s-q}
\]
we have that
\[
\begin{aligned}
    \int_{\mathbb{R}^N}|\nabla z_m|^q\dx
&\leq \gamma^q\left(\int_{\mathbb{R}^N}|x|^\alpha u^{t_1(s-q)}\dx\right)^{1/t_1}\left(\int_{\mathbb{R}^N}|x|^\alpha u^{t_2q\gamma}\dx\right)^{1/t_2}\\\
&\leq 
C_1\gamma^q\|u\|_{L^{t_2q\gamma}(\mathbb{R}^N,|x|^\alpha)}^{q\gamma},
\end{aligned}
\]
for some \(C_1=C_1(u)>0\), where we apply Theorem \ref{embeddingRN}. Next, from Lemma \ref{important},
\[
\|z_m\|_{L^{q_\alpha^*}(\mathbb{R}^N,|x|^\alpha)}^q\le C_2\gamma^q\|u\|_{L^{t_2q\gamma}(\mathbb{R}^N,|x|^\alpha)}^{q\gamma}. 
\]
Recalling that \(|z_m|=u^\gamma\) in \(A_m\), one has
\[
\|u\|_{L^{q_\alpha^*\gamma}(\mathbb{R}^N,|x|^\alpha)}^{q\gamma}\le C_2\gamma^q\|u\|_{L^{t_2q\gamma}(\mathbb{R}^N,|x|^\alpha)}^{q\gamma},
\]
where we used Fatou's Lemma. So,
\begin{equation}\label{iteracao-de-moser}
    \|u\|_{L^{q_\alpha^*\gamma}(\mathbb{R}^N,|x|^\alpha)}\le C_2^{1/q\gamma}\gamma^{1/\gamma}\|u\|_{L^{t_2q\gamma}(\mathbb{R}^N,|x|^\alpha)}.
\end{equation}

We claim that \(q_\alpha^*/t_2q>1\). In fact, since \(s/(s-q)<t_1\), then \(q/(s-q)<t_1-1\), and hence
\[
t_2=\frac{t_1}{t_1-1}=t_1\cdot\frac{1}{t_1-1}<\frac{q_\alpha^*}{s-q}\cdot \frac{s-q}{q} = \frac{q_\alpha^*}{q}.
\]
Next, we start Moser's iteration, choosing $\gamma=\sigma=q_\alpha^*/t_2q>1$. Consequently,
\[
\|u\|_{L^{q_\alpha^*\sigma}(\mathbb{R}^N,|x|^\alpha)}\le C_2^{1/q\sigma}\sigma^{1/\sigma}\|u\|_{L^{q_\alpha^*}(\mathbb{R}^N,|x|^\alpha)}.
\]
Now, choosing \(\gamma=\sigma^2\) in \eqref{iteracao-de-moser} and observing that \(t_2 q \sigma^2=(t_2q\sigma)\sigma=q_\alpha^*\sigma\), we get
\[
\|u\|_{L^{q_\alpha^*\sigma^2}(\mathbb{R}^N,|x|^\alpha)}\le C_2^{1/q\sigma^2}(\sigma^2)^{1/\sigma^2}\|u\|_{L^{q_\alpha^*\sigma}(\mathbb{R}^N,|x|^\alpha)}.
\]
Combining the last two inequalities,
\[
\begin{aligned}
    \|u\|_{L^{q_\alpha^*\sigma^2}(\mathbb{R}^N,|x|^\alpha)}&\le C_2^{1/q\sigma^2}(\sigma^2)^{1/\sigma^2} \left(C_2^{1/q\sigma}\sigma^{1/\sigma}\|u\|_{L^{q_\alpha^*}(\mathbb{R}^N,|x|^\alpha)}\right) 
    \\
    &= C_2^{1/q\sigma+1/q\sigma^2}\sigma^{1/\sigma+2/\sigma^2}\|u\|_{L^{q_\alpha^*}(\mathbb{R}^N,|x|^\alpha)}.
\end{aligned}
\]
Consequently, we can conclude that
\begin{equation}\label{des-final-iteracao-moser}
    \|u\|_{L^{q_\alpha^*\sigma^j}(\mathbb{R}^N,|x|^\alpha)} \le 
C_2^{\sum_{j=1}^{\infty}1/q\sigma^j}\sigma^{\sum_{j=1}^{\infty}j/\sigma^j}\|u\|_{L^{q_\alpha^*}(\mathbb{R}^N,|x|^\alpha)}.
\end{equation}
Since
\[
\sum_{j=1}^{\infty}\frac{1}{q\sigma^j}<\infty
\quad\text{and}\quad
\sum_{j=1}^{\infty}\frac{j}{\sigma^j}<\infty,
\]
we get
\[
\|u\|_{L^{q_\alpha^*\sigma^j}(\mathbb{R}^N,|x|^\alpha)} \le  C_3\|u\|_{L^{q_\alpha^*}(\mathbb{R}^N,|x|^\alpha)}.
\]

Now, consider $R>0$. If \(-p<\alpha\leq 0\), then 
\[
\int_{B_R}u^{q_\alpha^*\sigma^j}\dx \leq \frac{1}{R^\alpha}\int_{B_R}|x|^\alpha u^{q_\alpha^*\sigma^j}\dx.
\]
Combining the last two estimates, we have that
\[
\|u\|_{L ^{q_\alpha^*\sigma^j}(B_R)} \leq \frac{C_3}{R^{\alpha/(q_\alpha^*\sigma^j)}}\|u\|_{L^{q_\alpha^*}(\mathbb{R}^N,|x|^\alpha)}.
\]
Taking the limit as $j\to\infty$, follows that $u\in L^\infty(B_R)$. Case \(\alpha>0\), we can apply Hölder's inequality to get 
\[
\begin{aligned}
\int_{B_R}u^{\sigma^j}\dx
&=
\int_{B_R}
|x|^{-\alpha/q_\alpha^*}
|x|^{\alpha/q_\alpha^*}u^{\sigma^j}\dx\\
&\le
\left(
\int_{B_R}|x|^{-\alpha/(q_\alpha^*-1)}\dx
\right)^{(q_\alpha^*-1)/q_\alpha^*}
\left(
\int_{B_R}|x|^\alpha u^{q_\alpha^*\sigma^j}\dx
\right)^{1/q_\alpha^*}\\
&=
C_4
\left(
\int_{B_R}|x|^\alpha u^{q_\alpha^*\sigma^j}\dx
\right)^{1/q_\alpha^*},
\end{aligned}
\]
where
\[
C_4:=
\left(
\int_{B_R}|x|^{-\alpha/(q_\alpha^*-1)}\dx
\right)^{(q_\alpha^*-1)/q_\alpha^*}.
\]
Since \(1<q<N\) and \(\alpha>0\), it holds that \(-\alpha/(q_\alpha^*-1)+ N>0\). So, \(|x|^{-\alpha/(q_\alpha^*-1)}\in L^1(B_R)\), which ensures that \(C_4<\infty\). The above inequality and \eqref{des-final-iteracao-moser} infer that
\[
\|u\|_{L^{\sigma^j}(B_R)} \leq C_4^{1/\sigma^j}C_3\|u\|_{L^{q_\alpha^*}(\mathbb{R}^N,|x|^\alpha)}.
\]
Again taking the limit as $j\to\infty$, follows that $u\in L^\infty(B_R)$. 

Using the fact that $u\in L^\infty(B_R)$ together with  \eqref{radial}, one can conclude that \(u \in L^\infty(\mathbb{R}^N)\), completing the proof.
\end{proof}

\begin{remark}\label{rem:Linfty-other-domains} The same argument used in Lemma \ref{lema-iteracao-de-moser} can be applied to \(\Omega=B_r\), \(\Omega=B_R\setminus B_r\), or \(\Omega=\mathbb{R}^N \setminus B_R\) to get \(L^\infty\)-regularity, because the weighted Sobolev embeddings used in the iteration hold in
\(E_{\mathrm{rad}}(\Omega)\). Moreover, we can apply \eqref{radial} in the space $E_{\mathrm{rad}}(\Omega)$ by extending the functions to $\mathbb{R}^N$.
\end{remark}

The following result is essential for establishing the local
H\"older continuity of the gradient of weak solutions.

\begin{theorem}\label{regularidade-C1}
Let $\Gamma\subset\mathbb{R}^N$ be an open set and consider the operator
\[
Qu:=\mathrm{div}A(x,u,\nabla u)+B(x,u,\nabla u).
\]
Define 
\[
W^{1,G}(\Gamma)
:=
\left\{
u\in W_{\mathrm{loc}}^{1,1}(\Gamma):
\int_\Gamma G(|\nabla u|)\dx<\infty
\right\},
\]
where \(
G(t):=\int_0^t g(s)\,\mathrm{d}s\) and \(g\) denote a \(C^1\) function such that
\begin{equation}\label{condicao-g}
    0\leq \frac{t g'(t)}{g(t)}\leq g_0, \quad \forall \, t>0,
\end{equation}
for some constant \(g_0\geq0\). Assume that there exist positive constants \(\Lambda,\Lambda_1,M_0\) and \(\gamma\le 1\) such that
\begin{equation}\label{condicao-aij}
    a^{ij}(x,z,\eta)\xi_i\xi_j
\geq
\frac{g(|\eta|)}{|\eta|}|\xi|^2,
\end{equation}
\begin{equation}\label{condicao-|aij|}
|a^{ij}(x,z,\eta)|
\leq
\Lambda\frac{g(|\eta|)}{|\eta|},    
\end{equation}
\begin{equation}\label{A(x,z,eta)}
    |A(x,z,\eta)-A(y,w,\eta)|
\leq
\Lambda_1\bigl(1+g(|\eta|)\bigr)
\left(
|x-y|^\gamma+|z-w|^\gamma
\right),
\end{equation}
and
\begin{equation}\label{condicao-B}
|B(x,z,\eta)|
\leq
\Lambda_1\bigl(1+g(|\eta|)|\eta|\bigr),    
\end{equation}
for every $x,y\in\Gamma$, $z,w\in[-M_0,M_0]$,
$\eta\in\mathbb{R}^N\setminus\{0\}$, and $\xi\in\mathbb{R}^N$. If $u\in W^{1,G}(\Gamma)\cap L^\infty(\Gamma)$ is a weak solution of
\[
Qu=0, \quad
\text{in } \Gamma,
\]
then \(
u\in C^{1,\beta}(\Gamma)\), for some $\beta>0$.
\end{theorem}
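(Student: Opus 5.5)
This is Lieberman's $C^{1,\beta}$ theorem for equations with Orlicz (``natural'') growth, and in the paper I would invoke it by citation. If a self-contained argument were wanted, I would follow Lieberman's perturbation scheme. It has three steps: local H\"older continuity of $u$, decay estimates for a frozen-coefficient problem, and a comparison/Campanato iteration. Condition \eqref{condicao-g} is used throughout. It gives the $\Delta_2$-type bounds $\min\{\lambda,\lambda^{1+g_0}\}G(t)\le G(\lambda t)\le \max\{\lambda,\lambda^{1+g_0}\}G(t)$, and together with \eqref{condicao-aij}--\eqref{condicao-|aij|} it makes $A(x,z,\cdot)$ behave like the $g$-Laplacian $\mathrm{div}\,(g(|\nabla u|)\nabla u/|\nabla u|)$.

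\textbf{Step 1 (H\"older continuity of $u$).} Since $u\in L^\infty(\Gamma)$, take $M_0=\|u\|_\infty$. Testing the equation with $(u-k)^\pm\zeta^{1+g_0}$ and using \eqref{condicao-aij}, \eqref{condicao-B} together with Young's inequality in the Orlicz form $g(a)b\le \varepsilon g(a)a+C_\varepsilon g(b)b$ gives Caccioppoli inequalities of the form $\int_{B_r} G(|\nabla (u-k)^\pm|)\le C\int_{B_{2r}}G((u-k)^\pm/r)+C r^N$. These put $u$ in a De Giorgi class adapted to $G$. The standard De Giorgi oscillation-reduction argument then yields $u\in C^{0,\beta_0}_{\mathrm{loc}}(\Gamma)$.

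\textbf{Step 2 (frozen problem).} Fix a ball $B_R(x_0)\Subset\Gamma$. Let $v$ solve $\mathrm{div}\,A(x_0,u(x_0),\nabla v)=0$ in $B_R$ with $v=u$ on $\partial B_R$. For $v$ I would prove two things:
\begin{enumerate}
\item[(i)] $\sup_{B_{R/2}}G(|\nabla v|)\le C\,\frac{1}{|B_R|}\int_{B_R}G(|\nabla v|)$;
\item[(ii)] an excess decay $\frac{1}{|B_\rho|}\int_{B_\rho}|\nabla v-(\nabla v)_\rho|^{2}\frac{g(|\nabla v|)}{|\nabla v|}\le C(\rho/R)^{2\beta_1}(\cdots)$.
\end{enumerate}
For both, I would differentiate the equation after regularizing $g$ by $g_\varepsilon(t)=g(\sqrt{\varepsilon^2+t^2})$. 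Moser/De Giorgi iteration applied to $G(|\nabla v|)$ gives (i). A two-alternative argument gives (ii): either $|\nabla v|$ is close to its supremum on a large portion of the ball, in which case the equation is uniformly elliptic there, or it is not, in which case $\sup|\nabla v|$ drops by a fixed factor. The constants must be independent of $\varepsilon$, which is possible because \eqref{condicao-g} bounds the ellipticity ratio by $\max\{1,g_0\}\Lambda$.

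\textbf{Step 3 (comparison and iteration).} Test both equations with $u-v$. Using \eqref{A(x,z,eta)} and the H\"older continuity from Step 1, I would bound $\int_{B_R}\big(G(|\nabla u|)+G(|\nabla v|)\big)\,\Theta(|\nabla u-\nabla v|)$, where $\Theta$ is a convexity-gap measure, by $C R^{\gamma\beta_0}\int_{B_R}(1+G(|\nabla u|))$. The term coming from $B$ is $O(R)$ times the same quantity. Combining this with (ii) gives a Campanato-type inequality $\phi(\rho)\le C[(\rho/R)^{2\beta_1}+R^{\delta}]\phi(R)+CR^{N+\delta}$. The standard iteration lemma then gives $\nabla u\in C^{0,\beta}_{\mathrm{loc}}$.

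The main obstacle is Step 2(ii): obtaining the gradient excess decay for the frozen $g$-Laplace-type equation uniformly across the degenerate and nondegenerate regimes. This is where the full strength of \eqref{condicao-g} enters. I would follow Lieberman's argument (the analogue of DiBenedetto's and Tolksdorf's $p$-Laplacian method) rather than reprove it. Since the result is classical, the paper would simply cite Lieberman for Theorem \ref{regularidade-C1}, and the actual work in applying it is to verify \eqref{condicao-g}--\eqref{condicao-B} for $g(t)=t^{p-1}+t^{q-1}$ and $B=|x|^\alpha|u|^{s-2}u$ on sets $\Gamma$ away from the origin.
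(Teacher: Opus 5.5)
Your proposal matches the paper: its entire proof of this theorem is the citation \cite[Theorem 1.7]{Lieberman1991}, and your three-step outline of Lieberman's perturbation argument is a reasonable roadmap that goes beyond what the paper provides. One point to flag, which concerns the paper's transcription as much as your sketch: Lieberman assumes a positive lower bound $\delta\le tg'(t)/g(t)$ and a solution with $|u|\le M_0$, so the statement as written (lower bound $0$, and $M_0$ unrelated to $\|u\|_\infty$) is slightly broader than the cited result, and your $\delta=0$ growth bounds and your choice ``take $M_0=\|u\|_\infty$'' are not literally covered by the citation; in the application this is harmless, since $tg'(t)/g(t)\ge p-1$ for $g(t)=c_0(t^{p-1}+t^{q-1})$ and the structure conditions can be checked with $M_0=\|u\|_\infty$.
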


\begin{proof}
    \cite[Theorem 1.7]{Lieberman1991}.
\end{proof}

\begin{lemma}\label{alfa>0}
   Assume that \(\alpha\ge 0\). If \(u\in E_\mathrm{rad}(\mathbb{R}^N)\) is a nonnegative weak solution of \eqref{eq1}, then \(u\in C^{1,\beta}_{\mathrm{loc}}(\mathbb{R}^N)\), for some \(\beta>0\), and \(u>0\) in \(\mathbb{R}^N\).
\end{lemma}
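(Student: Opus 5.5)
We plan to apply Theorem \ref{regularidade-C1} on balls $\Gamma=B_R$, $R>0$ arbitrary, and then use a strong maximum principle for the $(p,q)$-Laplacian to get positivity.

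\textbf{Step 1: $L^\infty$ bound.} By Lemma \ref{lema-iteracao-de-moser}, $u\in L^\infty(\mathbb{R}^N)$. Set $M_0:=\|u\|_{L^\infty(\mathbb{R}^N)}$.

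\textbf{Step 2: structure conditions.} Write the equation as $Qu=0$ with
\[
A(x,z,\eta)=|\eta|^{p-2}\eta+|\eta|^{q-2}\eta, \qquad B(x,z,\eta)=|x|^\alpha|z|^{s-2}z .
\]
Take $g(t)=t^{p-1}+t^{q-1}$. Then
\[
\frac{tg'(t)}{g(t)}=\frac{(p-1)t^{p-1}+(q-1)t^{q-1}}{t^{p-1}+t^{q-1}}\in[p-1,q-1],
\]
so \eqref{condicao-g} holds with $g_0=q-1$. The matrix $a^{ij}=\partial A_i/\partial\eta_j$ is
\[
\sum_{m\in\{p,q\}}|\eta|^{m-2}\Bigl(\delta_{ij}+(m-2)\frac{\eta_i\eta_j}{|\eta|^2}\Bigr).
\]
Its eigenvalues lie between $\min\{1,m-1\}|\eta|^{m-2}$ and $\max\{1,m-1\}|\eta|^{m-2}$. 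This gives \eqref{condicao-aij} and \eqref{condicao-|aij|} after rescaling $A$ by a constant (equivalently, replacing $g$ by $c\,g$, which does not affect \eqref{condicao-g}). Since $A$ does not depend on $(x,z)$, condition \eqref{A(x,z,eta)} is trivial. For \eqref{condicao-B}, the hypothesis $\alpha\ge0$ is used: on $B_R$ and for $|z|\le M_0$ we have $|B|\le R^\alpha M_0^{s-1}\le\Lambda_1$. This is exactly why the case $\alpha<0$ is excluded, since then $|x|^\alpha$ is unbounded near the origin. Finally, $u\in W^{1,G}(B_R)$ because $\nabla u\in L^p\cap L^q$, and $u$ is a weak solution in $B_R$ by testing with $C_0^\infty(B_R)\subset E(\mathbb{R}^N)$. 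Theorem \ref{regularidade-C1} then gives $u\in C^{1,\beta}(B_R)$. Because Lieberman's result is interior, it is cleanest to apply it on $B_{2R}$ and obtain uniform estimates on $B_R$; this yields $u\in C^{1,\beta}_{\mathrm{loc}}(\mathbb{R}^N)$.

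\textbf{Step 3: positivity (the main obstacle).} Since $u\ge0$ and $u\not\equiv0$ (as a nontrivial solution), we have
\[
-\Delta_pu-\Delta_qu=|x|^\alpha u^{s-1}\ge0 .
\]
So $u$ is a nonnegative $C^1$ supersolution of an operator of the form $\operatorname{div}(a(|\nabla u|)\nabla u)$ with $a(t)=t^{p-2}+t^{q-2}$. We would invoke the strong maximum principle of Pucci--Serrin (Vázquez type). For the operator with $\Phi(t)=ta(t)=t^{p-1}+t^{q-1}$ and zero lower-order term, the condition $\int_0 dt/H^{-1}(F(t))=\infty$ holds trivially because $F\equiv0$. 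Hence if $u$ vanished at some point, it would vanish identically, contradicting $u\not\equiv0$. The delicate point is checking that the $(p,q)$ structure satisfies the ellipticity hypotheses of that theorem: $\Phi$ must be increasing with $\Phi(0^+)=0$, which is clear. Alternatively, we would reduce to the radial ODE
\[
-(r^{N-1}\Phi(u'))'=r^{N-1+\alpha}u^{s-1}\ge0 .
\]
From it, $r^{N-1}\Phi(u'(r))$ is nonincreasing and equals $0$ at $r=0$, so $u'\le0$ and $u$ is nonincreasing. If $u(r_0)=0$, then $u\equiv0$ on $[r_0,\infty)$. The strong maximum principle applied on an annulus around $|x|=r_0$ then forces $u\equiv0$ near $r_0$, and repeating the argument we conclude $u\equiv0$ everywhere, a contradiction.
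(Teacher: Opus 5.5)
Your proposal is correct and follows essentially the paper's route. It takes the $L^\infty$ bound from Lemma \ref{lema-iteracao-de-moser}, then verifies Lieberman's structure conditions with $A(\eta)=|\eta|^{p-2}\eta+|\eta|^{q-2}\eta$, $g$ a constant multiple of $t^{p-1}+t^{q-1}$, and the bound $|x|^\alpha\le R^\alpha$ on $B_R$ (where $\alpha\ge0$ is used), and finishes with the Pucci--Serrin strong maximum principle applied to $\Delta_pu+\Delta_qu\le0$. The differences are minor: you obtain ellipticity from the eigenvalues of $a^{ij}$ instead of the paper's case split on $p,q$ versus $2$, and you use the V\'azquez-type maximum principle with zero lower-order term instead of checking $\langle A(\eta),\eta\rangle\ge|\eta|^p$ and $|A(\eta)|\le2|\eta|^{p-1}$ for $|\eta|\le1$; your ODE alternative is unnecessary and, as written, still relies on the maximum principle.
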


\begin{proof}
    Consider the operator
    \[
    Qu=\mathrm{div}A(\nabla u) +B(x,u),
    \]
    where
    \[
A(\eta)=|\eta|^{p-2}\eta+|\eta|^{q-2}\eta \quad \mbox{and} \quad B(x,z)=|x|^\alpha |z|^{s-2}z.
\]
Moreover, define
\[
g(t):=c_0\left(t^{p-1}+t^{q-1}\right), \quad t>0,
\]
where \(c_0:=\min\{1,p-1\}>0\). See that
\[
g'(t)=c_0(p-1)t^{p-2}+c_0(q-1)t^{q-2}.
\]
Consequently,
\[
0\le \frac{tg'(t)}{g(t)}
=
\frac{(p-1)t^{p-1}+(q-1)t^{q-1}}
{t^{p-1}+t^{q-1}} \leq \frac{(q-1)\left(t^{p-1}+t^{q-1}\right)}
{t^{p-1}+t^{q-1}}=q-1,
\]
because \(p<q\). Thus, the function \(g\) satisfies \eqref{condicao-g}, with \(g_0=q-1>0\).

To prove \eqref{condicao-aij}, notice that
\begin{equation}\label{aij-eta}
    a^{ij}(\eta)
=
\frac{\partial A_i}{\partial \eta_j}(\eta)
=
\left(|\eta|^{p-2}+|\eta|^{q-2}\right)\delta_{ij}
+(p-2)|\eta|^{p-4}\eta_i\eta_j
+(q-2)|\eta|^{q-4}\eta_i\eta_j.
\end{equation}
Hence, for every $\xi\in\mathbb{R}^N$, we can write
\begin{equation}\label{expressao-aij-eta-xi}
    \begin{aligned}
a^{ij}(\eta)\xi_i\xi_j
=
\left(|\eta|^{p-2}+|\eta|^{q-2}\right)|\xi|^2 
+(p-2)|\eta|^{p-4}(\eta\cdot\xi)^2 
+(q-2)|\eta|^{q-4}(\eta\cdot\xi)^2.
\end{aligned}
\end{equation}
If $2\leq p<q$, we have that
\[
a^{ij}(\eta)\xi_i\xi_j
\geq
\left(|\eta|^{p-2}+|\eta|^{q-2}\right)|\xi|^2\geq \frac{g(|\eta|)}{|\eta|}|\xi|^2.
\]

If $1<p<2\leq q$, then using the Cauchy-Schwarz inequality, we have
\[
(\eta\cdot\xi)^2\leq |\eta|^2|\xi|^2.
\]
Since $p-2<0$ and $q-2\geq0$, from \eqref{expressao-aij-eta-xi} it follows that
\[
\begin{aligned}
    a^{ij}(\eta)\xi_i\xi_j &\ge \left(|\eta|^{p-2}+|\eta|^{q-2}\right)|\xi|^2
+(p-2)|\eta|^{p-2}|\xi|^2 = \left((p-1)|\eta|^{p-2}+|\eta|^{q-2}\right)|\xi|^2 
\\
&\ge (p-1)\left(|\eta|^{p-2}+|\eta|^{q-2}\right)|\xi|^2 \ge \frac{g(|\eta|)}{|\eta|}|\xi|^2,
\end{aligned}
\]
where we used the definition \(c_0:=\min\{1,p-1\}>0\). The case $1<p<q<2$ is similar. So, \eqref{condicao-aij} holds.

Now, we show that \eqref{condicao-|aij|} is valid. By \eqref{aij-eta}, we can estimate
\[
\begin{aligned}
|a^{ij}(\eta)|
&\leq
|\eta|^{p-2}+|\eta|^{q-2}
+|p-2||\eta|^{p-2}
+|q-2||\eta|^{q-2}\\
&=
\left(1+|p-2|\right)|\eta|^{p-2}
+\left(1+|q-2|\right)|\eta|^{q-2}\\
&\leq
\max\left\{1+|p-2|,1+|q-2|\right\}
\left(|\eta|^{p-2}+|\eta|^{q-2}\right),
\end{aligned}
\]
where we apply the inequalities \(|\delta_{ij}|\leq 1\) and \(|\eta_i\eta_j|\leq |\eta|^2\). Hence,
\[
|a^{ij}(\eta)| \le \left(\frac{\max\left\{1+|p-2|,1+|q-2|\right\}}{c_0}
\right)\frac{g(|\eta|)}{|\eta|},
\]
and so \eqref{condicao-|aij|} is true.

To prove \eqref{A(x,z,eta)}, notice that, in our case,
\[
A(x,z,\eta)=A(\eta)
=
|\eta|^{p-2}\eta+|\eta|^{q-2}\eta,
\]
and hence $A$ is independent of $x$ and $z$. Therefore, for any
$x,y\in\Gamma$, $z,w\in\mathbb{R}$, and $\eta\in\mathbb{R}^N$,
\[
\begin{aligned}
|A(x,z,\eta)-A(y,w,\eta)|
&=
\left|
\left(|\eta|^{p-2}\eta+|\eta|^{q-2}\eta\right)
-
\left(|\eta|^{p-2}\eta+|\eta|^{q-2}\eta\right)
\right|=0.
\end{aligned}
\]
Since
\[
\Lambda_1\bigl(1+g(|\eta|)\bigr)
\left(
|x-y|^\alpha+|z-w|^\alpha
\right)\geq 0,
\]
for any $\Lambda_1>0$, it follows that
\[
|A(x,z,\eta)-A(y,w,\eta)|
\leq
\Lambda_1\bigl(1+g(|\eta|)\bigr)
\left(
|x-y|^\alpha+|z-w|^\alpha
\right).
\]
Thus, \eqref{A(x,z,eta)} is satisfied.

Finally, we prove \eqref{condicao-B}, with \(\Gamma=B_R\), for each \(R>0\). To do this, since \(\alpha\ge 0\),
\[
|B(x,z)|
\leq
R^\alpha M_0^{s-1}
\bigl(1+g(|\eta|)|\eta|\bigr),
\]
for any \(M_0>0\). Thus, the items \eqref{condicao-g}-\eqref{condicao-B} are valid.

Using the fact that \(u\) is a weak solution of 
\[
    Qu=\mathrm{div}A(\nabla u) +B(x,u), \quad \text{in } \Gamma=B_R,
    \]
because \(C_0^\infty(\Gamma)\subset C_0^\infty(\mathbb{R}^N)\) and \(u \in W^{1,G}(\Gamma)\cap L^\infty(\Gamma)\) (see Lemma \ref{lema-iteracao-de-moser}), we can apply Theorem \ref{regularidade-C1} to get \(
u\in C^{1,\beta}(\Gamma)\), completing the first part of result.

It remains to establish that $u>0$ in $\mathbb{R}^N$. Assume, by
contradiction, that there exists $x_0\in\mathbb{R}^N$ such that
$u(x_0)=0$. Since $u\not\equiv 0$, we may choose
$x_1\in\mathbb{R}^N$ such that $u(x_1)>0$. Let $B_R(x_0)$ be a ball
centered at $x_0$ and containing $x_1$. See that
\[
\operatorname{div}{A}(\nabla u)
=
\Delta_p u+\Delta_q u
=
-|x|^\alpha |u|^{s-2}u
\leq 0,
\quad\text{in } B_R(x_0),
\]
where we have used the fact that $u\geq0$. We next verify the structural conditions needed to apply the Strong
Maximum Principle \cite[Theorem 2.5.1]{PucciSerrin}. Notice first that
\(
{A}\in L^\infty_{\mathrm{loc}}
(\mathbb{R}^N).
\)
We show that there exist constants $a_1,a_2>0$ such that, for every
$|\eta|\leq1$,
\[
\langle{A}(\eta),\eta\rangle
\geq a_1|\eta|^p
\quad\text{and}\quad
|{A}(\eta)|
\leq a_2|\eta|^{p-1}.
\]
Indeed,
\[
\begin{aligned}
\langle{A}(\eta),\eta\rangle=
\left\langle
|\eta|^{p-2}\eta+|\eta|^{q-2}\eta,\eta
\right\rangle=
|\eta|^p+|\eta|^q\geq |\eta|^p.
\end{aligned}
\]
Moreover, since $1<p<q$ and $|\eta|\leq1$, we have
\(
|\eta|^{q-1}\leq|\eta|^{p-1}.
\)
So,
\[
\begin{aligned}
|{A}(\eta)|
=
\left|
|\eta|^{p-2}\eta+|\eta|^{q-2}\eta
\right|\leq
|\eta|^{p-1}+|\eta|^{q-1}\leq
2|\eta|^{p-1}.
\end{aligned}
\]
Thus, the required inequalities hold with \(a_1=1\) and \(a_2=2\). Hence, we can apply \cite[Theorem 2.5.1]{PucciSerrin} to get \(u=0\) in \(B_R(x_0)\) or \(u>0\) in \(B_R(x_0)\), which is a contradiction, and so \(u>0\) in \(\mathbb{R}^N\), completing the proof.
\end{proof}

\begin{lemma}\label{alfa<0}
    Assume that \(-p<\alpha< 0\). If \(u\in E_\mathrm{rad}(\mathbb{R}^N)\) is a nonnegative weak solution of \eqref{eq1}, then it is possible to get a weak solution \(\widetilde{u} \in  E_\mathrm{rad}(\mathbb{R}^N)\) of \eqref{eq1}  such that \(\widetilde{u}>0\) in \(\mathbb{R}^N\) and \(
\left.\widetilde{u}\right|_{\mathbb{R}^{N}\setminus\{0\}} = u.
\)
\end{lemma}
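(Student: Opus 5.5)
The plan is to handle the singular case $-p<\alpha<0$ by working away from the origin, where the weight $|x|^\alpha$ is smooth and bounded. There I obtain $C^{1,\beta}$-regularity and positivity exactly as in Lemma \ref{alfa>0}. Then I define $\widetilde u$ at the origin by a limit, using monotonicity of the radial profile.

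\textbf{Step 1: regularity away from the origin.} Fix an annulus $\Gamma=B_R\setminus\overline{B_\rho}$ with $0<\rho<R$. On $\Gamma$ we have
\[
|B(x,z)|=|x|^\alpha|z|^{s-1}\le \rho^{\alpha}M_0^{s-1},
\]
where $M_0:=\|u\|_{L^\infty}$, which is finite by Lemma \ref{lema-iteracao-de-moser}. The structural conditions \eqref{condicao-g}--\eqref{condicao-|aij|} and \eqref{A(x,z,eta)} concern only $A(\eta)$, so they are verified verbatim as in Lemma \ref{alfa>0}. Condition \eqref{condicao-B} holds on $\Gamma$ by the bound above. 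Theorem \ref{regularidade-C1} then gives $u\in C^{1,\beta}(\Gamma)$ for every such $\Gamma$, hence $u\in C^{1}(\mathbb{R}^N\setminus\{0\})$.

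\textbf{Step 2: positivity away from the origin.} The Pucci--Serrin strong maximum principle argument of Lemma \ref{alfa>0} applies on any ball avoiding the origin. Since $\mathbb{R}^N\setminus\{0\}$ is connected ($N\ge3$) and $u\not\equiv0$, a chain of balls shows $u>0$ on $\mathbb{R}^N\setminus\{0\}$.

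\textbf{Step 3: defining $\widetilde u(0)$.} Write $u=u(r)$. The radial ODE on $(0,\infty)$ reads
\[
\bigl(r^{N-1}(|u'|^{p-2}u'+|u'|^{q-2}u')\bigr)'=-r^{N-1+\alpha}u^{s-1}<0 .
\]
Hence $\phi(r):=r^{N-1}(|u'|^{p-2}u'+|u'|^{q-2}u')$ is strictly decreasing. Its limit as $r\to0^+$ exists, and it must be $0$: otherwise $|u'|\gtrsim r^{-(N-1)/(q-1)}$ or a similar bound near $0$ would contradict $u\in L^\infty$ or the finiteness of $\int|\nabla u|^p$. Since $\phi$ decreases from $0$, we get $\phi<0$ on $(0,\infty)$, so $u'<0$ and $u$ is decreasing. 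Therefore $\lim_{r\to0^+}u(r)=:L$ exists, with $0<L\le\|u\|_\infty$.

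Set $\widetilde u(0):=L$ and $\widetilde u=u$ elsewhere. Then $\widetilde u$ coincides with $u$ almost everywhere, so it is the same element of $E_{\mathrm{rad}}(\mathbb{R}^N)$ and is still a weak solution of \eqref{eq1}. Moreover $\widetilde u>0$ everywhere and $\widetilde u|_{\mathbb{R}^N\setminus\{0\}}=u$.

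\textbf{Main obstacle.} The delicate point is proving $\lim_{r\to0}\phi(r)=0$, which is what rules out a singular behaviour of the flux at the origin. I would show it from the weak formulation itself. Test with radial cutoffs $\varphi_\varepsilon$ that equal $1$ on $B_\varepsilon$, vanish outside $B_{2\varepsilon}$, and satisfy $|\nabla\varphi_\varepsilon|\le C/\varepsilon$. The equation then gives that $|\phi|$ is bounded by a multiple of $\varepsilon^{N-1+\alpha+1}$ plus the gradient term. The gradient term tends to $0$ by Hölder's inequality, since $\int_{B_{2\varepsilon}}|\nabla u|^m\to0$ and $\varepsilon^{-1}|B_{2\varepsilon}|^{1/m}\to0$ for $m<N$. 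The source term tends to $0$ because $\alpha>-p>-N$. Alternatively, integrating $\phi'$ directly from $0$ gives $\phi(r)=-\int_0^r t^{N-1+\alpha}u^{s-1}\,dt$, which is finite since $N+\alpha>0$ and $u$ is bounded.
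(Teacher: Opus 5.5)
Your proposal is correct and follows essentially the paper's route: $C^1$-regularity and positivity on annuli, the flux $\phi(r)=r^{N-1}(|u'|^{p-2}u'+|u'|^{q-2}u')$ with $\phi'=-r^{N-1+\alpha}u^{s-1}$, and the blow-up $|u'|\ge C r^{-(N-1)/(q-1)}$ to rule out a nonzero limit of $\phi$ at $0$, hence $u'<0$ and $\widetilde u(0):=\lim_{r\to0^+}u(r)$. The paper gets its contradiction from $\int_0^\varepsilon|u'|^q r^{N-1}\,\mathrm{d}r<\infty$, and your contradiction with $u\in L^\infty$ works equally well since $(N-1)/(q-1)>1$. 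Two small corrections: that lower bound does not in general contradict $\int|\nabla u|^p<\infty$, so rely on the $q$-gradient or on boundedness; and your closing ``alternatively'' is circular, because finiteness of $\int_0^r t^{N-1+\alpha}u^{s-1}\,\mathrm{d}t$ only shows that $\lim_{r\to0^+}\phi(r)$ is finite, not that it vanishes.
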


\begin{proof}
By Lemma \ref{lema-iteracao-de-moser} and by adapting
the regularity and positivity arguments of Lemma~\ref{alfa>0}
to annuli centered at the origin, we can conclude that
\begin{equation}\label{alpha-negative-profile-bound}
    u\in C^1(\mathbb R^N\setminus\{0\}) \quad \mbox{and}
\quad
0<u(x) \le M, \quad \forall \, x\ne0,
\end{equation}
for some \(M>0\). Since \(u\) is radial, we can write
\[
u(x)=U(|x|) \in C^1((0,\infty)).
\]
Following the argument used in the proof of \cite[Theorem 1.9]{Cardoso-Carvalho-Alb-Figueiredo} which relies on the radial regularity result of \cite{PucciSerrinZou}, we obtain
\[
\left(|\nabla u|^{p-2}\nabla u+|\nabla u|^{q-2}\nabla u\right)
\in C^1\left(\mathbb{R}^N \setminus \{0\} \right)
\]
and \(u\) solves \eqref{eq1} pointwise in \(\mathbb{R}^N \setminus \{0\}\). Thus, defining 
\begin{equation}
    a(t):=|t|^{p-2}t+|t|^{q-2}t \quad \mbox{and} \quad F(r):=r^{N-1}a(U'(r)),
\end{equation}
for \(r>0\), one can conclude that
\begin{equation}\label{alpha-negative-flux-equation}
F'(r)=-r^{N-1+\alpha}U(r)^{s-1}.
\end{equation}
Fixing
\(r_0>0\), we can write
\[
F(r)
=
F(r_0)
+
\int_r^{r_0}t^{N-1+\alpha}U(t)^{s-1}\,\mathrm{d}t,
\quad 0<r<r_0,
\]
because \(|\int_0^{r_0}t^{N-1+\alpha}U(t)^{s-1}\,\mathrm{d}t|<\infty\). Hence there exists \(\ell \in \mathbb{R}\) such that
\[
\ell:=\lim_{r\to 0^+}F(r).
\]
We claim that \(\ell=0\). Otherwise
\(\ell\ne0\). So, there exist \(C_1>0\) and \(\varepsilon>0\)
such that
\[
|F(r)|\ge C_1,
\quad 0<r<\varepsilon.
\]
Since
\(
|a(t)|=|t|^{p-1}+|t|^{q-1},
\)
we obtain
\[
|U'(r)|^{p-1}+|U'(r)|^{q-1}
=
r^{1-N}|F(r)|
\ge C_1 r^{1-N},
\quad 0<r<\varepsilon.
\]
Using the fact that \(r^{1-N}\to \infty \) as \(r\to0^+\), we can assume that 
\[
|U'(r)|\ge1,
\quad 0<r<\varepsilon.
\]
Since \(p<q\), it follows that
\(
|U'(r)|^{p-1}\le |U'(r)|^{q-1},
\)
and so
\[
2|U'(r)|^{q-1}\ge C_1r^{1-N},
\quad 0<r<\varepsilon.
\]
Thus,
\[
|U'(r)|
\ge C_2 r^{-\frac{N-1}{q-1}},
\quad 0<r<\varepsilon.
\]
Consequently, we can estimate
\[
\begin{aligned}
\infty>\int_0^\varepsilon |U'(r)|^q r^{N-1}\,\mathrm{d}r
&\ge
C_2^q\int_0^\varepsilon
r^{-\frac{q(N-1)}{q-1}+N-1}\,\mathrm{d}r=
C_2^q\int_0^\varepsilon
r^{-\frac{N-1}{q-1}}\,\mathrm{d}r
=\infty,
\end{aligned}
\]
because \(q<N\) implies \(
{(N-1)}/{(q-1)}>1,
\)
which is a contradiction. Therefore,
\begin{equation}\label{alpha-negative-zero-flux}
\lim_{r\to0^+}F(r)=0.
\end{equation}
Integrating \eqref{alpha-negative-flux-equation} on the interval \((\delta,r)\) and taking
\(\delta\to0^+\), we deduce from
\eqref{alpha-negative-zero-flux} that
\begin{equation}\label{alpha-negative-flux-identity}
r^{N-1}
\left(
|U'(r)|^{p-2}U'(r)
+
|U'(r)|^{q-2}U'(r)
\right)
=
-\int_0^r t^{N-1+\alpha}U(t)^{s-1}\,\mathrm{d}t.
\end{equation}
Since \(U(t)>0\) for every \(t>0\), the right-hand side is
strictly negative. Hence, \(U'(r)<0\), for any \(r>0\). So, we can apply \eqref{alpha-negative-profile-bound} to get
\[
0<U(1)\le U(r)\le M,
\quad 0<r<1.
\]
Thus,
\[
0<U(1)\le \lim_{r\to0^+} U(r) \le M.
\]
Now defining
\[
\widetilde u(x):=
\begin{cases}
u(x),&x\ne0,\\[2mm]
L,&x=0,
\end{cases}
\]
where
\(
L:=\lim_{r\to0^+} U(r) \in (0,\infty)
\)
the result is proved.
\end{proof}

\begin{remark}\label{obs:radial-regularity-pieces}
Lemmas \ref{alfa>0} and \ref{alfa<0} remain valid, with similar arguments, when
\(\mathbb{R}^N\) is replaced by a ball, an annulus, or an exterior domain
centered at the origin. Indeed, the \(L^\infty\)-estimate, the verification of
the structural assumptions required for the local \(C^{1,\beta}\)-regularity,
and the strong maximum principle are all local arguments and therefore apply
on these domains as well. In the exterior-domain case, the regularity theorem
is applied on bounded subdomains compactly contained in \(\Omega\), where
\(|x|^\alpha\) is bounded.
\end{remark}

\subsection{Qualitative properties of the critical values}\label{sec4}

This subsection is devoted to the analysis of the behavior of the critical values of $I$ and \(I_\Omega\) as the radii of the domains are varied. These qualitative properties will be used later in the construction of nodal solutions. Our first result concerns the uniform boundedness of the critical levels.

\begin{lemma}\label{uniformbounded}
Let \((\rho_n)_n\subset [0,\infty)\) be such that
$
\rho_n\to \rho,
$
for some \(\rho\in[0,\infty)\). Then the sequence 
$
\left(c(\rho_n,\infty)\right)_n
$
is bounded from above. More precisely, there exist \(K>0\) and \(n_0\in\mathbb{N}\) such that
\[
c_{\mathcal{N}_{\Omega(\rho_n,\infty)}}\leq K,
\quad
\forall \, n\geq n_0.
\]
\end{lemma}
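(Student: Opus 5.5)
The plan is to produce a single test function that lives in every domain \(\Omega(\rho_n,\infty)\) for \(n\) large. The energy of its Nehari projection then bounds all the critical levels at once.

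\textbf{Choice of test function.} Since \(\rho_n\to\rho\), there is \(n_0\in\mathbb{N}\) with \(\rho_n<\rho+1\) for all \(n\geq n_0\). Fix a nonzero \(\varphi\in C_{0,\mathrm{rad}}^\infty(\mathbb{R}^N)\) with
\[
\operatorname{supp}\varphi\subset \Omega(\rho+1,\rho+2).
\]
Then for every \(n\geq n_0\) we have \(\operatorname{supp}\varphi\subset\Omega(\rho_n,\infty)\), so \(\varphi\in E_{\mathrm{rad}}(\Omega(\rho_n,\infty))\setminus\{0\}\). Moreover, the quantities
\[
\|\nabla\varphi\|_{L^p}^p,\qquad \|\nabla\varphi\|_{L^q}^q,\qquad \int |x|^\alpha|\varphi|^s\,\mathrm{d}x
\]
are the same whether computed over \(\Omega(\rho_n,\infty)\) or over \(\mathbb{R}^N\). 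They are therefore independent of \(n\).

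\textbf{Nehari projection.} By Lemma \ref{lemaproj}, for each such \(n\) there is a unique \(t_\varphi>0\) with \(t_\varphi\varphi\in\mathcal{N}_{\Omega(\rho_n,\infty)}\). The number \(t_\varphi\) is the unique positive zero of
\[
t^{p-1}a+t^{q-1}b-t^{s-1}c, \qquad a=\|\nabla\varphi\|_{L^p}^p,\; b=\|\nabla\varphi\|_{L^q}^q,\; c=\int|x|^\alpha|\varphi|^s\,\mathrm{d}x .
\]
Since \(a,b,c\) do not depend on \(n\), neither does \(t_\varphi\). Hence, for all \(n\geq n_0\),
\[
c_{\mathcal{N}_{\Omega(\rho_n,\infty)}}\leq I_{\Omega(\rho_n,\infty)}(t_\varphi\varphi)=\max_{t\geq0} I_{\mathbb{R}^N}(t\varphi)=:K .
\]

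\textbf{Finiteness of \(K\).} The maximum is finite because \(s>q>p\), so \(I_{\mathbb{R}^N}(t\varphi)\to-\infty\) as \(t\to\infty\). Also \(K>0\), by \eqref{limitada}.

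\textbf{Main obstacle.} The only delicate point is the case \(\rho=0\) together with possibly \(\rho_n>0\). It is handled by placing the support of \(\varphi\) in the annulus above, away from the origin, which is why the support is taken in \(\Omega(\rho+1,\rho+2)\). No genuine difficulty is expected beyond this.
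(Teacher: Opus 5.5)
Your proof is correct and follows the same approach as the paper: fix one radial test function supported away from all the \(\rho_n\) for \(n\geq n_0\), and bound each level by the energy of its Nehari projection. Your observation that the projection parameter \(t_\varphi\) does not depend on \(n\) is cleaner than the paper's roundabout argument that \(\tau_n\) stays bounded, and your choice of support also covers the case \(\rho=0\), which the paper's proof does not treat explicitly.
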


\begin{proof}
Assume that
$
\rho_n\to\rho>0,
$
as $n\rightarrow\infty$. Then there exist \(R>\rho\) and $n_0\in\mathbb{N}$ such that
$
\rho_n<R,
$
for $n\geq n_0$. Let $\varphi\in C_{0,\mathrm{rad}}^\infty(\mathbb{R}^N)
$
be such that
\[
\operatorname{supp}(\varphi)
\subset
\Omega(R,\infty)=
\left\{
x\in\mathbb{R}^N
:
|x|>R
\right\}.
\]
Hence
$
\varphi\in E_{\mathrm{rad}}(\Omega(\rho_n,\infty))\setminus\{0\},
$
for all $n\geq n_0$. In view of Lemma \ref{lemaproj}, for each \(n\ge n_0\) there exists a unique 
$
\tau_n>0
$
such that
$
\tau_n\varphi\in\mathcal{N}_{\Omega(\rho_n,\infty)}.
$
So,
\begin{equation}\label{bounded}
c_{\mathcal{N}_{\Omega}(\rho_n,\infty)}
\leq
I_{\Omega(\rho_n,\infty)}(\tau_n\varphi)=\frac{\tau_n^{p}}{p} \int_{\mathbb{R}^N}|\nabla\varphi|^p\,\mathrm{d}x+\frac{\tau_n^{q}}{q} \int_{\mathbb{R}^N}|\nabla\varphi|^q\,\mathrm{d}x
-
\frac{t_n^s}{s}\int_{\mathbb{R}^N}|x|^\alpha |\varphi|^s\,\mathrm{d}x.
\end{equation}
Since
$
\tau_n\varphi\in\mathcal{N}_{\Omega(\rho_n,\infty)},
$
and $\tau_n>0$ we have that
\[
\tau_n^{p-s} \int_{\mathbb{R}^N}|\nabla\varphi|^p\,\mathrm{d}x+\tau_n^{q-s} \int_{\mathbb{R}^N}|\nabla\varphi|^q\,\mathrm{d}x
=
\int_{\mathbb{R}^N}|x|^\alpha |\varphi|^s\,\mathrm{d}x.
\]
Since
$
p<q<s,
$
the left-hand side tends to \(+\infty\) as \(\tau_n\to0^+\), and tends to \(0\) as \(\tau_n\to+\infty\), yielding a contradiction.  Thus, there exist \(m_0,M_0>0\) such that
\begin{equation}\label{bounded2}
m_0\leq \tau_n\leq M_0, \quad
\forall \, n\geq n_0.
\end{equation}
Therefore, \eqref{bounded} and \eqref{bounded2} imply that there exists \(K>0\) such that
\[
c_{\mathcal{N}_{\Omega(\rho_n,\infty)}}\leq K, \quad
\forall \, n\geq n_0.
\]
The proof is complete.
\end{proof}

\begin{proposition}\label{levels}
The critical levels satisfy the following properties:
\begin{enumerate}
    \item[\((a)\)] if \(0\leq \rho\leq \rho' < \sigma' \leq \sigma \leq \infty\), then
    $
    c_{\mathcal{N}_{\Omega(\rho,\sigma)}}\leq c_{\mathcal{N}_{\Omega(\rho',\sigma')}};
    $
\vspace{0,1cm}
    \item[\((b)\)] if \(\sigma\le\sigma_0<\infty\) and $\sigma-\rho\to 0$, then 
$    c_{\mathcal{N}_{\Omega(\rho,\sigma)}}\to \infty;
    $

\vspace{0,1cm}
    \item[\((c)\)]
    $
    c_{\mathcal{N}_{\Omega(\rho,\infty)}}\to \infty,
    $ as $\rho\to \infty;
    $
\vspace{0,1cm}
    \item[\((d)\)] if $
(\rho_n,\sigma_n)\to(\rho,\sigma)
$, then 
\(
c_{\mathcal{N}_{\Omega(\rho,\sigma)}}
\leq\liminf_{n\to\infty}c_{\mathcal{N}_{\Omega(\rho_n,\sigma_n)}}.
\)
\end{enumerate}
\end{proposition}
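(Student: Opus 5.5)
We prove the four items separately, using the Nehari characterization $c_{\mathcal N_\Omega}=\inf_{u\in E_{\mathrm{rad}}(\Omega)\setminus\{0\}}\max_{t\ge0}I_\Omega(tu)$, which follows from Lemma \ref{lemaproj}.

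\textbf{Item $(a)$.} This is monotonicity under domain inclusion. Since $\Omega(\rho',\sigma')\subset\Omega(\rho,\sigma)$, extension by zero gives an isometric inclusion $E_{\mathrm{rad}}(\Omega(\rho',\sigma'))\subset E_{\mathrm{rad}}(\Omega(\rho,\sigma))$. This inclusion preserves $I$ and the Nehari constraint, so $\mathcal N_{\Omega(\rho',\sigma')}\subset\mathcal N_{\Omega(\rho,\sigma)}$, and taking infima gives the claim.

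\textbf{Items $(b)$ and $(c)$.} We use the identity \eqref{limitada}: for $u\in\mathcal N_\Omega$ we have $I_\Omega(u)\ge c\,(\|\nabla u\|_p^p+\|\nabla u\|_q^q)$. It therefore suffices to show that on the relevant domains every Nehari element has large gradient norm. The Nehari identity gives
\[
\|\nabla u\|_p^p+\|\nabla u\|_q^q=\int|x|^\alpha|u|^s\,\mathrm{d}x\le S_\Omega\,\|u\|_{p,q,\Omega}^s ,
\]
where $S_\Omega$ is the best constant of the embedding on $\Omega$. If $S_\Omega\to0$ along the family, then, arguing exactly as in Lemma \ref{bounded-away-zero}, $\|u\|_{p,q,\Omega}$ is bounded below by a quantity tending to infinity, since $s>q>p$. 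Hence $c_{\mathcal N_\Omega}\to\infty$.

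For $(c)$, the exterior-domain estimate in the proof of Theorem \ref{embeddingRN} already gives
\[
\int_{|x|>\rho}|x|^\alpha|u|^s\,\mathrm{d}x\le C\rho^{-\delta}\|u\|_{p,q}^s,
\]
so $S_{\Omega(\rho,\infty)}\le C\rho^{-\delta}\to0$.

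For $(b)$, on a thin annulus with $\sigma\le\sigma_0$ and $\sigma-\rho\to0$ we need a smallness estimate.
\begin{itemize}
\item Write $u(r)=\int_\rho^r u'$. By Hölder, $|u(r)|\le\int_\rho^\sigma|u'|$, which is controlled by $(\sigma-\rho)^{1-1/q}$ times a weighted $L^q$ norm of $u'$.
\item This yields $\|u\|_{L^q(\Omega)}\le C(\sigma-\rho)\|\nabla u\|_{L^q}$, a one-dimensional Poincaré inequality with the weight $r^{N-1}$, which is comparable on the annulus when $\rho$ stays away from $0$.
\item Combining it with the $L^{q_\alpha^*}$ bound of Lemma \ref{lemma_important} by interpolation gives $S_\Omega\le C(\sigma-\rho)^{\kappa}$ for some $\kappa>0$, using $s<q_\alpha^*$.
\item If $\rho\to0$, the domain is a small ball $B_\sigma$. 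We instead use Hölder against $\int_{B_\sigma}|x|^\alpha=C\sigma^{N+\alpha}$, as in \eqref{B1}, giving $S\le C\sigma^{(N+\alpha)(1-s/q_\alpha^*)}$.
\end{itemize}
Care is needed to make the constants uniform when $\rho$ is small but nonzero. We handle this by splitting into $\rho<\sigma/2$, where we use the ball estimate on $B_\sigma$ via $(a)$, and $\rho\ge\sigma/2$, where we use the thin-annulus estimate.

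\textbf{Item $(d)$.} This lower semicontinuity is the main obstacle. We may assume $\liminf_n c_n<\infty$ and pass to a subsequence realizing it. Take minimizers $u_n\in\mathcal N_{\Omega(\rho_n,\sigma_n)}$, which exist by the lemma above. They are bounded in $E_{\mathrm{rad}}(\mathbb R^N)$ after extension by zero, so $u_n\rightharpoonup u$ weakly, and by Theorem \ref{embeddingRN} we get $\int|x|^\alpha|u_n|^s\to\int|x|^\alpha|u|^s$.

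\begin{itemize}
\item $u\neq0$: the lower bound of Lemma \ref{bounded-away-zero} on the norm must be uniform in $n$. This holds because it uses only the whole-space embedding constant via the extension.
\item Support of $u$: we show $u$ vanishes a.e.\ outside $\Omega(\rho,\sigma)$, using pointwise a.e.\ convergence from local compactness on annuli. Hence $u\in E_{\mathrm{rad}}(\Omega(\rho,\sigma))$. This needs radial functions in $E_{\mathrm{rad}}(\mathbb R^N)$ vanishing on $|x|\le\rho$ and $|x|\ge\sigma$ to belong to the closure of $C^\infty_{0,\mathrm{rad}}(\Omega(\rho,\sigma))$. We get this from the one-dimensional description: continuous profiles, cut-off and translation of the radial variable.
\item Comparison: weak lower semicontinuity \eqref{lim-infimo} gives $I'(u)u\le0$, so $t_u\le1$. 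Then
\[
c_{\mathcal N_{\Omega(\rho,\sigma)}}\le I(t_uu)=\Big(\tfrac1p-\tfrac1s\Big)t_u^p\|\nabla u\|_p^p+\Big(\tfrac1q-\tfrac1s\Big)t_u^q\|\nabla u\|_q^q\le\liminf_n c_n ,
\]
exactly as in the proof that the Nehari infimum is attained.
\end{itemize}

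The delicate points are the density/trace identification of the limit's support, and the case $\sigma=\infty$ or $\rho=0$, where the limit domain is unbounded or a ball. We treat these uniformly through the radial bound \eqref{radial}, which gives continuity of the profiles on $(0,\infty)$.
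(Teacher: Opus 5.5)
Your proposal is correct. Items $(a)$ and $(c)$ coincide with the paper's arguments, but $(d)$, and to a lesser extent $(b)$, follow a different route.

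\textbf{Item $(d)$.} The paper does not pass to a weak limit. It treats only $\sigma_n=\sigma=\infty$ with $\rho>0$, and rescales the minimizers, $v_n(x)=u_n(\lambda_n x)$ with $\lambda_n=\rho_n/\rho$, so that $v_n\in E_{\mathrm{rad}}(\Omega(\rho,\infty))$ automatically. It then projects onto $\mathcal N_{\Omega(\rho,\infty)}$. From the explicit factors $\lambda_n^{p-N},\lambda_n^{q-N},\lambda_n^{-(N+\alpha)}$ it reads off that the projection parameter tends to $1$ and that the energy changes by $o(1)$.
\begin{itemize}
\item What the paper's route buys: it needs no weak limits and no identification of the limit space. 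Run in reverse, it even gives continuity of $\rho\mapsto c_{\mathcal N_{\Omega(\rho,\infty)}}$.
\item What it costs: a single dilation cannot carry $\Omega(\rho_n,\sigma_n)$ into $\Omega(\rho,\sigma)$ when both radii move. So the cases the paper calls analogous actually need a non-homogeneous radial change of variables, and $\rho=0$ needs a separate remark via $(a)$.
\item What your route buys: it is the existence-of-minimizer proof rerun along the moving domains. It treats $\rho=0$, $\sigma<\infty$ and $\sigma=\infty$ at once and gives exactly the lower semicontinuity required.
\item What it costs: the compactness in Theorem \ref{embeddingRN}, plus the density fact you correctly isolate. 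A function of $E_{\mathrm{rad}}(\mathbb R^N)$ vanishing outside $\Omega(\rho,\sigma)$ must belong to $E_{\mathrm{rad}}(\Omega(\rho,\sigma))$; your dilation/cut-off sketch is adequate for this.
\end{itemize}

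\textbf{Item $(b)$.} Your case split and the Poincaré inequality are unnecessary. The Hölder estimate you use for balls works on every annulus, because
$\int_{\Omega(\rho,\sigma)}|x|^\alpha\,\mathrm{d}x=\frac{\omega_N}{N+\alpha}\bigl(\sigma^{N+\alpha}-\rho^{N+\alpha}\bigr)\to0$
uniformly for $\sigma\le\sigma_0$, by uniform continuity of $r\mapsto r^{N+\alpha}$ on $[0,\sigma_0]$. This is exactly the paper's proof. If you keep the Poincaré branch, note that for $\alpha<0$ the weight contributes a factor $\sigma^\alpha$ when you interpolate. This is harmless, since $\sigma^\alpha(\sigma-\rho)^q\le C(\sigma-\rho)^{q+\alpha}$ when $\sigma-\rho\le\sigma/2$, and $q+\alpha>q-p>0$.
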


\begin{proof}
$(a)$ Assume that
\[
0\leq \rho\leq \rho'<\sigma'\leq \sigma\leq \infty.
\]
Then,
$
\Omega(\rho',\sigma')\subset \Omega(\rho,\sigma).
$
Hence, every function \(u\in E_{\mathrm{rad}}(\Omega(\rho',\sigma'))\) can be identified with its zero extension to \(\Omega(\rho,\sigma)\). So, we have
$
\mathcal{N}_{\Omega(\rho',\sigma')}\subset \mathcal{N}_{\Omega(\rho,\sigma)}.
$
Therefore,
\begin{equation*}
c_{\mathcal{N}_{\Omega(\rho,\sigma)}}
=
\inf_{u\in\mathcal{N}_{\Omega(\rho,\sigma)}} I_{\Omega(\rho,\sigma)}(u)
\leq
\inf_{u\in\mathcal{N}_{\Omega(\rho',\sigma')}} I_{\Omega(\rho',\sigma')}(u)
=
c_{\mathcal{N}_{\Omega(\rho',\sigma')}}   .
\end{equation*}
This proves \((a)\).

\vspace{0,2cm}

\noindent\((b)\) Suppose, by contradiction, that there exist a sequence
$
((\rho_n,\sigma_n))_n 
$
and a constant \(\xi>0\)  such that
$
\sigma_n-\rho_n\to 0,
$ as $n\rightarrow\infty$ and $c_{\mathcal{N}_{\Omega(\rho_n,\sigma_n)}}\leq \xi,$ for $n\in\mathbb{N}$. For each \(n\in\mathbb{N}\), choose
$
u_n\in\mathcal{N}_{\Omega(\rho_n,\sigma_n)}
$
such that
\[
I_{\Omega(\rho_n,\sigma_n)}(u_n)\leq c_{\mathcal{N}_{\Omega(\rho_n,\sigma_n)}}+1\leq \xi+1.
\]
Extending \(u_n\) by zero outside \(\Omega(\rho_n,\sigma_n)\), we may consider \(u_n\) as an element of \(E_{\mathrm{rad}}(\mathbb{R}^N)\). Since \(u_n\in\mathcal{N}_{\Omega(\rho_n,\sigma_n)}\), we have
\[
\begin{aligned}
\xi+1
&\geq
I(u_n)-\frac{1}{s}I'(u_n)u_n=
\left(\frac{1}{p}-\frac{1}{s}\right)
\|\nabla u_n\|_{L^p(\mathbb{R}^N)}^p
+
\left(\frac{1}{q}-\frac{1}{s}\right)
\|\nabla u_n\|_{L^q(\mathbb{R}^N)}^q,
\end{aligned}
\]
which implies that \((u_n)_n\) is bounded in \(E(\mathbb{R}^N)\). 

Now, we claim that
\begin{equation}\label{conv1}
\int_{\mathbb{R}^N}|x|^\alpha |u_n|^s\,\mathrm{d}x\to 0, \quad \mbox{as } n\rightarrow\infty.
\end{equation}
Indeed, since \(u_n\) is supported in \(\Omega(\rho_n,\sigma_n)\), it follows from Corollary \ref{embedding-omega}, H\"older's inequality and the fact that \(p_\alpha^*\leq s<q_\alpha^*\) that
\begin{equation}\label{seq-un-s}
    \begin{aligned}
\int_{\mathbb{R}^N}|x|^\alpha |u_n|^s\,\mathrm{d}x & =\int_{\Omega_n}|x|^\alpha |u_n|^s\,\mathrm{d}x
\\
&\leq
\left(
\int_{\Omega(\rho_n,\sigma_n)}|x|^\alpha |u_n|^{q_\alpha^*}\,\mathrm{d}x
\right)^{s/q_\alpha^*}
\left(
\int_{\Omega(\rho_n,\sigma_n)}|x|^\alpha\,\mathrm{d}x
\right)^{1-s/q_\alpha^*}
\\
&\leq
C_1\|u_n\|_{p,q}^s
\left(
\int_{\Omega(\rho_n,\sigma_n)}|x|^\alpha\,\mathrm{d}x
\right)^{1-s/q_\alpha^*}.
\end{aligned}
\end{equation}
See that
\[
\int_{\Omega(\rho_n,\sigma_n)}|x|^\alpha\,\mathrm{d}x = \omega_N\int_{\rho_n}^{\sigma_n}s^{\alpha +N-1}\,\mathrm{d}s =\frac{\omega_N}{\alpha+N}(\sigma_n^{\alpha+N}-\rho_n^{\alpha+N}).
\]
Since \(\alpha>-p\) and \(p<N\), we have \(\alpha>-N\), and hence
$
r\mapsto r^{N+\alpha-1}
$
belongs to \(L^1(0,\sigma_0)\). Using polar coordinates, we obtain
\[
\int_{\Omega(\rho_n,\sigma_n)}
|x|^\alpha\,\mathrm{d}x
=
\omega_N
\int_{\rho_n}^{\sigma_n}
r^{N+\alpha-1}\,\mathrm{d}r.
\]
Since
$
0\leq \rho_n<\sigma_n\leq\sigma_0
$
and
$
\sigma_n-\rho_n\to0,
$
the absolute continuity of the Lebesgue integral yields
\[
\int_{\rho_n}^{\sigma_n}
r^{N+\alpha-1}\,\mathrm{d}r
\rightarrow0, \quad\text{as }n\to\infty.
\]
Consequently,
\[
\int_{\Omega(\rho_n,\sigma_n)}
|x|^\alpha\,\mathrm{d}x
\to0,
\quad\text{as }n\to\infty.
\]
Combining this convergence with \eqref{seq-un-s}, we conclude that
\eqref{conv1} holds.

On the other hand, since \(u_n\in\mathcal{N}_{\Omega(\rho_n,\sigma_n)}\), we have that
\[
\|\nabla u_n\|_{L^p(\mathbb{R}^N)}^p
+
\|\nabla u_n\|_{L^q(\mathbb{R}^N)}^q
=
\int_{\mathbb{R}^N}|x|^\alpha |u_n|^s\,\mathrm{d}x\rightarrow0, \quad \mbox{as } n\rightarrow\infty,
\]
which implies that 
\begin{equation}\label{conv2}
    \|u_n\|_{p,q}\to 0, \quad \mbox{as } n\rightarrow\infty.
\end{equation} 
However, after extending \(u_n\) by zero outside \(\Omega_n\), we have
$
(u_n)_n\subset\mathcal{N}_{\mathbb{R}^{N}}.
$
By Lemma \ref{bounded-away-zero}, there exists \(\beta>0\) such that
$
\|u_n\|_{p,q}\geq \beta,
$ for all $
n\in\mathbb{N},
$
which contradicts \eqref{conv2}. Therefore,
\[
c_{\mathcal{N}_{\Omega(\rho,\sigma)}}\to\infty,
\quad
\mbox{as } \sigma-\rho\to 0.
\]
This proves \((b)\).

\vspace{0,2cm}

$(c)$ Suppose, by contradiction, that there exist a sequence \((\rho_n)_n\) and a constant \(\xi>0\) such that 
$
\rho_n\to\infty
$
and
$
c_{\mathcal{N}_{\Omega(\rho_n,\infty)}}\leq \xi,
$ for any $n\in\mathbb{N}$. For each \(n\in\mathbb{N}\), choose
$
u_n\in\mathcal{N}_{\Omega(\rho_n,\infty)}
$
such that
\[
I_{\Omega(\rho_n,\infty)}(u_n)\leq c_{\mathcal{N}_{\Omega(\rho_n,\infty)}}+1\leq \xi+1.
\]
Extending \(u_n\) by zero outside \(\Omega(\rho_n,\infty)\), we may consider \(u_n\) as an element of \(E_{\mathrm{rad}}(\mathbb{R}^N)\). Similarly to item $(b)$ one may conclude that \((u_n)_n\) is bounded in \(E_{\mathrm{rad}}(\mathbb{R}^N)\).
We claim that
\begin{equation}\label{conv3}
\int_{\Omega(\rho_n,\infty)}|x|^\alpha |u_n|^s\,\mathrm{d}x\to 0, \quad \mbox{as } n\rightarrow\infty.
\end{equation}
Indeed, by \eqref{radial},
\[
|u_n(x)|
\leq
C_2|x|^{-\frac{N-p}{p}}\|\nabla u_n\|_{L^p(\mathbb{R}^N)},
\qquad x\neq0.
\]
Since \(s>p_\alpha^*\), we obtain
\[
\begin{aligned}
\int_{\Omega(\rho_n,\infty)}
|x|^\alpha |u_n|^s\,\mathrm{d}x
&=
\int_{\Omega(\rho_n,\infty)}
|u_n|^{s-p_\alpha^*}
|x|^\alpha |u_n|^{p_\alpha^*}\,\mathrm{d}x
\\
&\leq
C_2^{\,s-p_\alpha^*}
\|\nabla u_n\|_{L^p(\mathbb{R}^N)}^{s-p_\alpha^*}\int_{\Omega(\rho_n,\infty)}
|x|^{-\frac{(s-p_\alpha^*)(N-p)}{p}}
|x|^\alpha |u_n|^{p_\alpha^*}\,\mathrm{d}x.
\end{aligned}
\]
Set
\[
\delta
:=
\frac{(s-p_\alpha^*)(N-p)}{p}>0.
\]
Since \(|x|\geq\rho_n\) in \(\Omega(\rho_n,\infty)\), we have
$
|x|^{-\delta}\leq \rho_n^{-\delta}.
$
Hence
\[
\int_{\Omega(\rho_n,\infty)}
|x|^\alpha |u_n|^s\,\mathrm{d}x
\leq
C_2^{\,s-p_\alpha^*}
\rho_n^{-\delta}
\|\nabla u_n\|_{L^p(\mathbb{R}^N)}^{s-p_\alpha^*}
\int_{\mathbb{R}^N}
|x|^\alpha |u_n|^{p_\alpha^*}\,\mathrm{d}x.
\]
By the weighted Sobolev embedding at the endpoint \(p_\alpha^*\),
\[
\int_{\mathbb{R}^N}
|x|^\alpha |u_n|^{p_\alpha^*}\,\mathrm{d}x
\leq
C_3
\|\nabla u_n\|_{L^p(\mathbb{R}^N)}^{p_\alpha^*}.
\]
Consequently,
\[
\int_{\Omega(\rho_n,\infty)}
|x|^\alpha |u_n|^s\,\mathrm{d}x
\leq
C_4
\rho_n^{-\delta}
\|\nabla u_n\|_{L^p(\mathbb{R}^N)}^s.
\]
Since \((u_n)_n\) is bounded in
\(E_{\mathrm{rad}}(\mathbb{R}^N)\), there exists \(C_5>0\) such that
\[
\int_{\Omega(\rho_n,\infty)}
|x|^\alpha |u_n|^s\,\mathrm{d}x
\leq
C_5\rho_n^{-\delta}.
\]
Finally, since
$
\delta>0
$
and
$
\rho_n\to\infty,
$
we have
$
\rho_n^{-\delta}\to0,
$
and therefore \eqref{conv3} holds. The remainder of the proof follows as in
item \((b)\).

\vspace{0,2cm}

$(d)$ We have to show that
\begin{equation}\label{goal}
c_{\mathcal{N}_{\Omega(\rho,\sigma)}}
\leq\liminf_{n\to\infty}c_{\mathcal{N}_{\Omega(\rho_n,\sigma_n)}},
\end{equation}
whenever
$
(\rho_n,\sigma_n)\to(\rho,\sigma).
$
For simplicity, we only consider the case
$
\sigma_n=\sigma=\infty,
$
the remaining cases being analogous. 

Assume that
$
\rho_n\to\rho>0,
$
as $n\rightarrow\infty$. For each $n\in\mathbb N$, let
$
u_n\in \mathcal N_{\Omega(\rho_n,\infty)}
$
be such that
\begin{equation}\label{un-nivel-rho_n}
I_{\Omega(\rho_n,\infty)}(u_n)=c_{\mathcal{N}_{\Omega(\rho_n,\infty)}}.    
\end{equation}
Since $u_n\in\mathcal N_{\Omega(\rho_n,\infty)}$ (see \eqref{derivada-Gateaux} with \(u_n=u=v\)), we have that
\begin{equation}\label{nehari-un-igualdade}
    \int_{\Omega(\rho_n,\infty)}
|\nabla u_n|^p\dx+\int_{\Omega(\rho_n,\infty)}
|\nabla u_n|^q\dx=\int_{\Omega(\rho_n,\infty)}
|x|^\alpha |u_n|^s\dx.
\end{equation}
Since $s>q>p$, we may write
\begin{equation}\label{d5}
\begin{aligned}
I_{\Omega(\rho_n,\infty)}(u_n)&=I_{\Omega(\rho_n,\infty)}(u_n)-\frac{1}{s}I_{\Omega(\rho_n,\infty)}'(u_n)u_n
\\
&=
\Bigl(\frac1p-\frac1s\Bigr)\int_{\Omega(\rho_n,\infty)}
|\nabla u_n|^p\dx
+
\Bigl(\frac1q-\frac1s\Bigr)\int_{\Omega(\rho_n,\infty)}
|\nabla u_n|^q\dx
\\
&\geq C_6\left(\int_{\Omega(\rho_n,\infty)}
|\nabla u_n|^p\dx+\int_{\Omega(\rho_n,\infty)}
|\nabla u_n|^q\dx\right).    
\end{aligned}
\end{equation}
Thus, in view of Lemma \ref{uniformbounded} and \eqref{d5},
\[
\int_{\Omega(\rho_n,\infty)}
|\nabla u_n|^p\dx+\int_{\Omega(\rho_n,\infty)}
|\nabla u_n|^q\dx \le C_6^{-1}I_{\Omega(\rho_n,\infty)}(u_n)= C_6^{-1}c_{\mathcal{N}_{\Omega(\rho_n,\infty)}} \leq C_7,
\quad
\forall \, n \in \mathbb{N}.
\]
This together with \eqref{nehari-un-igualdade}, imply that
\begin{equation}\label{d7}
A_n\leq C_7,
\quad
B_n\le C_7,
\quad
C_n\leq 2C_7, \quad \forall \, n\in\mathbb{N},
\end{equation}
where
\[
A_n:=\int_{\Omega(\rho_n,\infty)}
|\nabla u_n|^p\dx, \quad B_n:=\int_{\Omega(\rho_n,\infty)}
|\nabla u_n|^q\dx, \quad \mbox{and} \quad C_n:=\int_{\Omega(\rho_n,\infty)}
|x|^\alpha |u_n|^s\dx.
\]
Now, set
$
\lambda_n:=\rho_n/\rho,
$
and define
\begin{equation}\label{def-vn}
v_n(x):=
u_n(\lambda_n x),
\quad \forall \,
x\in\Omega(\rho,\infty).    
\end{equation}
By Lemma~\ref{lemaproj}, for each \(n\in\mathbb{N}\), there exists a unique number $t_n>0$ such that
$
w_n:=t_n v_n
\in
\mathcal N_{\Omega(\rho,\infty)},
$
which implies that
\begin{equation}\label{I-wn-c}
    I_{\Omega(\rho,\infty)}(w_n)\ge c_{\mathcal{N}_{\Omega(\rho,\infty)}}.
\end{equation}
Therefore, in order to prove \eqref{goal}, it is sufficient to prove that
\begin{equation}\label{goal2}
I_{\Omega(\rho,\infty)}(w_n)-I_{\Omega({\rho_n},\infty)}(u_n)\rightarrow0, \quad \mbox{as } n\rightarrow\infty.
\end{equation}

We claim that $t_n\to1$, as $n\rightarrow\infty$. Since $w_n=t_n v_n\in\mathcal N_{\Omega(\rho,\infty)}$, we have
\begin{equation}\label{d12}
t_n^p
\int_{\Omega(\rho,\infty)}
|\nabla v_n|^p\dx
+
t_n^q
\int_{\Omega(\rho,\infty)}
|\nabla v_n|^q\dx
=
t_n^s
\int_{\Omega(\rho,\infty)}
|x|^\alpha |v_n|^s\dx.
\end{equation}
Using the change of variables $y=\lambda_n x$, it follows from \eqref{def-vn} and \eqref{d12} that
\[
t_n^p\lambda_n^{p-N}A_n
+
t_n^q\lambda_n^{q-N}B_n
=
t_n^s\lambda_n^{-(N+\alpha)}
(A_n+B_n).
\]
Dividing by $A_n+B_n$ yields
\begin{equation}\label{d17}
\theta_n t_n^p\lambda_n^{p-N}
+
(1-\theta_n)t_n^q\lambda_n^{q-N}
=
t_n^s\lambda_n^{-(N+\alpha)},
\end{equation}
where
\[
\theta_n:=
\frac{A_n}{A_n+B_n}\in[0,1].
\]
Passing to a subsequence if necessary, we may assume that
$
\theta_n\to\theta_0\in[0,1].
$
In view of \eqref{d17} and the fact that \(\lambda_n\to1\), as \(n\to\infty\), the sequence $(t_n)_n$ is bounded. Let $t_0$ be any limit point of $(t_n)_n$. Since $\lambda_n\to1$, passing to the limit in \eqref{d17}, we obtain
\[
\theta_0 t_0^p
+
(1-\theta_0)t_0^q
=
t_0^s.
\]
Let $F:(0,\infty)\rightarrow\mathbb{R}$ be defined as
\[
F(t)
=
\theta t^p
+
(1-\theta)t^q
-
t^s.
\]
Since $s>q>p$, it is easy to see that 
$
F(t)>0
$ for $0<t<1$, $F(1)=0$, and $F(t)<0$ for $t>1$. Hence $t=1$ is the unique positive root of $F$, and therefore
$
t_0=1,
$
and the claim is proved.

By writing
\[
I_{\Omega(\rho,\infty)}(v_n)
=
\frac{\lambda_n^{p-N}}{p}A_n
+
\frac{\lambda_n^{q-N}}{q}B_n
-
\frac{\lambda_n^{-(N+\alpha)}}{s}C_n,
\]
it follows from \eqref{d7} that 
\begin{equation*}
|I_{\Omega(\rho,\infty)}(v_n)-I_{\Omega(\rho_n,\infty)}(u_n)| \leq C_8
\Bigl(
|\lambda_n^{p-N}-1|
+
|\lambda_n^{q-N}-1|
+
|\lambda_n^{-(N+\alpha)}-1|
\Bigr),
\end{equation*}
for some constant $C_8>0$ independent of $n$. Since $\lambda_n\to1$, we conclude that
\begin{equation}\label{d20}
I_{\Omega(\rho,\infty)}(v_n)-I_{\Omega(\rho_n,\infty)}(u_n)\rightarrow 0, \quad \mbox{as } n\rightarrow\infty.
\end{equation}
By denoting
\[
\widetilde A_n
=
\int_{\Omega(\rho,\infty)}
|\nabla v_n|^p\dx,
\quad 
\widetilde B_n
=
\int_{\Omega(\rho,\infty)}
|\nabla v_n|^q\dx,
\quad 
\widetilde C_n
=
\int_{\Omega(\rho,\infty)}
|x|^\alpha |v_n|^s\dx,
\]
note that
\[
\widetilde{A}_n=\lambda_n^{p-N}A_n,
\quad
\widetilde{B}_n=\lambda_n^{q-N}B_n,
\quad \mbox{and} \quad 
\widetilde{C}_n=\lambda_n^{-N-\alpha}C_n.
\]
Since $\lambda_n\to1$, it follows from \eqref{d7} that $\widetilde{A}_n$, $\widetilde{B}_n$ and $\widetilde{C}_n$ are uniformly bounded.
Moreover, we may write
\[
I_{\Omega(\rho,\infty)}(w_n)
=
\frac{t_n^p}{p}\widetilde A_n
+
\frac{t_n^q}{q}\widetilde B_n
-
\frac{t_n^s}{s}\widetilde C_n \quad \mbox{and} \quad I_{\Omega(\rho,\infty)}(v_n)
=
\frac1p\widetilde A_n
+
\frac1q\widetilde B_n
-
\frac1s\widetilde C_n.
\]
Hence,
\begin{equation*}
I_{\Omega(\rho,\infty)}(w_n)-I_{\Omega(\rho,\infty)}(v_n)
=
\frac{t_n^p-1}{p}\widetilde A_n
+
\frac{t_n^q-1}{q}\widetilde B_n
-
\frac{t_n^s-1}{s}\widetilde C_n.
\end{equation*}
Using the convergence $t_n\rightarrow1$, we conclude that
\begin{equation}\label{d21}
I_{\Omega(\rho,\infty)}(w_n)-I_{\Omega(\rho,\infty)}(v_n)\rightarrow0, \quad \mbox{as } n\rightarrow\infty.
\end{equation}
Finally, combining \eqref{d20} and \eqref{d21}, we obtain \eqref{goal2} holds, that is,
\[
I_{\Omega(\rho,\infty)}(w_n)-I_{\Omega(\rho_n,\infty)}(u_n)\rightarrow0, \quad \mbox{as } n\rightarrow\infty.
\]
From \eqref{I-wn-c} and \eqref{un-nivel-rho_n},
\[
c_{\mathcal{N}_{\Omega(\rho,\infty)}}
\leq
I_{\Omega(\rho,\infty)}(w_n)
=
I_{\Omega(\rho_n,\infty)}(u_n)+o_n(1)
=
c_{\mathcal{N}_{\Omega(\rho_n,\infty)}}+o_n(1),
\]
which implies that \eqref{goal} holds, with \(\sigma_n=\sigma=\infty\), completing the proof.
\end{proof}

\subsection{Construction of the nodal solutions}

The aim of this subsection is to complete the proof of Theorem \ref{existencianodal}. For an integer $k\geq 1$, we consider
\[
0=\rho_0<\rho_1<\cdots<\rho_k<\rho_{k+1}=\infty.
\]
For each \(i=0,1,\ldots,k\), let
$
c_{\mathcal{N}_{\Omega(\rho_i,\rho_{i+1})}}
$
denote the critical level associated with the functional \(I_{\Omega(\rho_i,\rho_{i+1})}\). Define
\[
\mathcal{C}(\rho_1,\ldots,\rho_k)
:=
\sum_{i=0}^{k}c_{\mathcal{N}_{\Omega(\rho_i,\rho_{i+1})}}
\]
and consider
\[
\Theta:=\inf_{\mathcal{P}_k}\mathcal{C}(\rho_1,\ldots,\rho_k),
\]
where
\[
\mathcal{P}_k:=\left\{
0<\rho_1<\cdots<\rho_k<\infty
\right\}.
\]
Consider a minimizing sequence
\(
\left((\rho_1^n,\ldots,\rho_k^n)\right)_n\subset\mathcal{P}_k
\)
for $\Theta$. Thus,
\begin{equation}\label{seq-minimizante}
    \mathcal{C}(\rho_1^n,\ldots,\rho_k^n)
\to
\Theta,
\quad\text{as } n\to\infty.
\end{equation}
We claim that the sequence \((\rho_k^n)_n\subset \mathbb{R}\) is bounded. Otherwise, \(\rho_k^n \to \infty\), as \(n\to\infty\). By  Proposition \ref{levels} - item \((c)\), we have that
\[  
\mathcal{C}(\rho_1^n,\ldots,\rho_k^n) \ge c_{\mathcal{N}_{\Omega(\rho_k^n,\infty)}} \to \infty,
\]
as \(n\to\infty\), contradicting \eqref{seq-minimizante}. Since \(0<\rho_1^n<\cdots<\rho_k^n\), it follows that for each \(i\in\{1,\ldots,k\}\), the sequence \((\rho_i^n)_n\subset \mathbb{R}\) is bounded. Defining \(\rho_0^n=0\) and \(\rho_{k+1}^n=\infty\), for any \(n\in\mathbb{N}\), we can conclude that up to a subsequence, there exist
\[
0=\bar\rho_0<\bar\rho_1<\cdots<
\bar\rho_k<\bar\rho_{k+1}=\infty
\]
such that \(
\rho_i^n\to\bar\rho_i,
\)
as \(n\to\infty\). Finally, by the lower semicontinuity property (see Proposition \ref{levels} - item \((d)\)), we obtain
\begin{equation}\label{minimizante}
    \begin{aligned}
\Theta & \leq \mathcal{C}(\bar\rho_1,\ldots,\bar\rho_k)=
\sum_{i=0}^{k}
c_{\mathcal{N}_{\Omega(\bar\rho_i,\bar\rho_{i+1})}}\leq
\liminf_{n\to\infty}
\sum_{i=0}^{k}
c_{\mathcal{N}_{\Omega(\rho_i^n,\rho_{i+1}^n)}}
\\
&=
\liminf_{n\to\infty}
\mathcal{C}(\rho_1^n,\ldots,\rho_k^n)=\Theta.
\end{aligned}
\end{equation}
For each \(i=0,1,\ldots,k\), let \(u_i\) be a positive radial solution of \eqref{Sx} on the domain
$
\Omega(\bar\rho_i,\bar\rho_{i+1}).
$
We define \(u:\mathbb{R}^{N}\to\mathbb{R}\) as follows. For
$
x\in\Omega(\bar\rho_i,\bar\rho_{i+1}),
$
$
i=0,1,\ldots,k,
$
we set
\[
u(x)
:=
\begin{cases}
u_i(x), & \text{if } i \text{ is even},\\
-u_i(x), & \text{if } i \text{ is odd}.
\end{cases}
\]
Then \(u\) is radial and changes sign exactly \(k\) times. Moreover, on each domain
$
\Omega(\bar\rho_i,\bar\rho_{i+1}),
$
the function \(u\) satisfies weakly
\[
-\Delta_p u-\Delta_q u
=
|x|^\alpha |u|^{s-2}u.
\]
Equivalently, writing \(u=u(r)\), we have
\begin{equation}\label{radial-equation-nodal}
-\left(r^{N-1}|u'|^{p-2}u'\right)'
-
\left(r^{N-1}|u'|^{q-2}u'\right)'
=
r^{N-1+\alpha}|u|^{s-2}u
\end{equation}
on each interval
$
(\bar\rho_i,\bar\rho_{i+1}),
$
for
$i=0,1,\ldots,k.
$
By Remark \ref{obs:radial-regularity-pieces}, the glued radial function
\(u=u(r)\) belongs to \(C^1((\bar\rho_i,\bar\rho_{i+1}))\) for every
\(i=0,\ldots,k\) satisfies the radial equation pointwise in each nodal
interval. Moreover, since
each one-sign piece has zero trace on the finite boundary components of its
nodal region, \(u\) is continuous at the nodal radii and
\begin{equation}\label{u-se-anulando}
    u(\bar\rho_i)=0,\qquad i=1,\ldots,k.
\end{equation}
Therefore, in order to prove Theorem \ref{existencianodal}, it is sufficient to show that
\begin{equation}\label{limits}
\lim_{r\to\bar\rho_i^-}u'(r)
=
\lim_{r\to\bar \rho_i^+}u'(r),
\quad
i=1,\ldots,k.
\end{equation}
In fact, fix $\varphi\in C_0^\infty(\mathbb R^N)$. Noting that
\[
\begin{aligned}
    \operatorname{div}
\left[
\varphi\left(
|\nabla u|^{p-2}\nabla u
+
|\nabla u|^{q-2}\nabla u
\right)
\right] &= \left(
|\nabla u|^{p-2}\nabla u
+
|\nabla u|^{q-2}\nabla u
\right)\nabla\varphi
\\
&\quad
+
\varphi\operatorname{div}(|\nabla u|^{p-2}\nabla u
+
|\nabla u|^{q-2}),
\end{aligned}
\]
and using the equation in each $
\Omega(\bar\rho_i,\bar\rho_{i+1})$, we obtain
\[
\begin{aligned}
&\int_{\mathbb R^N}
\left[
\left(
|\nabla u|^{p-2}\nabla u
+
|\nabla u|^{q-2}\nabla u
\right)\nabla\varphi
-
|x|^\alpha|u|^{s-2}u\varphi
\right]\dx\\
&\quad=
\sum_{i=0}^{k}
\int_{\Omega(\bar\rho_i,\bar\rho_{i+1})}
\operatorname{div}
\left[
\varphi\left(
|\nabla u|^{p-2}\nabla u
+
|\nabla u|^{q-2}\nabla u
\right)
\right]\dx\\
&\quad=
\sum_{i=0}^{k}
\int_{\partial\Omega(\bar\rho_i,\bar\rho_{i+1})}
\varphi
\left(
|\nabla u|^{p-2}\nabla u
+
|\nabla u|^{q-2}\nabla u
\right)\cdot\nu_i\,\mathrm{d}S,
\end{aligned}
\]
where $\nu_i$ denotes the outward unit normal to $\Omega(\bar\rho_i,\bar\rho_{i+1})$. A simple calculation allows us to conclude that
\[
\begin{aligned}
&\int_{\mathbb R^N}
\left[
\left(
|\nabla u|^{p-2}\nabla u
+
|\nabla u|^{q-2}\nabla u
\right)\nabla\varphi
-
|x|^\alpha|u|^{s-2}u\varphi
\right]\dx\\
&\quad=
\sum_{j=1}^{k}
\Big[
|u'(\rho_j^-)|^{p-2}u'(\rho_j^-)
-
|u'(\rho_j^+)|^{p-2}u'(\rho_j^+)\\
&\hspace{3cm}
+
|u'(\rho_j^-)|^{q-2}u'(\rho_j^-)
-
|u'(\rho_j^+)|^{q-2}u'(\rho_j^+)
\Big]
\int_{|x|=\rho_j}\varphi \,\mathrm{d}S.
\end{aligned}
\]
Hence, if \eqref{limits} holds, then \(u\) is a weak solution of \eqref{eq1}. For simplicity, for $i=1,\ldots,k$, we set
\[
\rho:=\bar\rho_{i-1},
\quad
\tau:=\bar\rho_i,
\quad
\sigma:=\bar\rho_{i+1}.
\]
Without loss of generality, we may assume $u(r)>0$ for $r\in(\rho,\tau)$, and $u(r)<0$ for $r\in(\tau,\sigma)$. Now, define
\begin{equation}\label{derivatives}
u'_-:=
\lim_{r\to\tau^-}u'(r)
\qquad
\mbox{and}
\qquad
u'_+:=
\lim_{r\to\tau^+}u'(r).
\end{equation}
Thus, to prove \eqref{limits}, it is enough to show that
\begin{equation}\label{limits2}
u'_-=u'_+.
\end{equation}
To prove \eqref{limits2}, arguing by contradiction, assume that $u'_-\neq u'_+$. Choose \(\delta>0\) sufficiently small so that
$
\rho<\tau-\delta<\tau+\delta<\sigma.
$
Define
$v:[\rho,\sigma]\to\mathbb{R}$ by
\begin{equation}\label{eq:def-v-linear}
v_\delta(r):=
\begin{cases}
u(r), & \text{if } \rho\leq r\leq \tau-\delta,\\[0.2cm]
u(\tau-\delta)
+
\dfrac{u(\tau+\delta)-u(\tau-\delta)}{2\delta}
\bigl(r-(\tau-\delta)\bigr),
& \text{if } \tau-\delta\leq r\leq \tau+\delta,\\[0.2cm]
u(r), & \text{if } \tau+\delta\leq r\leq \sigma.
\end{cases}
\end{equation}
For simplicity, we denote by \(v(r)\). Since
$
u(\tau-\delta)>0
$ and $u(\tau+\delta)<0$, for \(\delta>0\) small, let $\tau_0=\tau_0(\delta)$ be the unique zero point of $v$ in the interval \((\tau-\delta,\tau+\delta)\). From the definition of \(v\), we have
\begin{equation}\label{isolando-tau-0}
    0
=v(\tau_0)=
u(\tau-\delta)
+
\dfrac{u(\tau+\delta)-u(\tau-\delta)}{2\delta}
\bigl(\tau_0-(\tau-\delta)\bigr).
\end{equation}
Since $u(\tau)=0$, the mean value theorem yields a point
$\xi_\delta^-\in(\tau-\delta,\tau)$ such that
\[
u(\tau-\delta)
=u(\tau)-\delta u'(\xi_\delta^-)
=-\delta u'(\xi_\delta^-).
\]
Since $\xi_\delta^-\to\tau^-$ as $\delta\to0^+$, we have
$u'(\xi_\delta^-)\to u'_-$. So, we can write 
\begin{equation}\label{asymptotics}
u(\tau-\delta)
=
-\delta u'_-+o(\delta)
\quad \mbox{and} \quad 
u(\tau+\delta)
=
\delta u'_+  + o(\delta),
\end{equation}
where \(o(\delta)/\delta\to0\), as \(\delta\to 0^+\). Thus,
\begin{equation}\label{eq:tau0-asymptotic}
\tau_0-\tau
=
\delta\,
\frac{u(\tau-\delta)+u(\tau+\delta)}
{u(\tau-\delta)-u(\tau+\delta)}
=
\frac{u'_- -u'_+}{u'_-+u'_+}\,\delta
+
o(\delta),
\end{equation}
where we used \eqref{isolando-tau-0}. On the interval \((\tau-\delta,\tau+\delta)\), the function \(v\) is linear and satisfies
$
v(\tau_0)=0.
$
Hence,  we obtain
\[
\tau_0-(\tau-\delta)
=
\frac{2\delta u(\tau-\delta)}
{u(\tau-\delta)-u(\tau+\delta)}.
\]
In view of \eqref{asymptotics}, we deduce
\begin{equation}\label{asymptotics3}
    \tau_0-(\tau-\delta)=\frac{|u'_-|}
{\left|\frac{u'_-+u'_+}{2}\right|}
\delta
+
o(\delta).
\end{equation}
Moreover, for every \(r\in(\tau-\delta,\tau+\delta)\), we may write
$
v(r)=v'(r)(r-\tau_0).
$
It follows from \eqref{asymptotics} that
\begin{equation}\label{asymptotics2}
v'(r)
=
\frac{u(\tau+\delta)-u(\tau-\delta)}{2\delta}
=
\frac{u'_-+u'_+}{2}+o(1).
\end{equation}
Therefore, 
\begin{equation}\label{delta-small}
|v'(r)| \le C_0, \quad \mbox{for \(\delta>0\) sufficiently small}.    
\end{equation}
In view of Lemma \ref{lemaproj}, there exist unique constants \(\lambda_\delta>0\) and \(\mu_\delta>0\) such that $\lambda_\delta v\in\mathcal{N}_{\Omega(\rho,\tau_0)}$ and $\mu_\delta v\in\mathcal{N}_{\Omega(\tau_0,\sigma)}$. Define \(\omega:[\rho,\sigma]\to\mathbb{R}\) by
\begin{equation}\label{def-w}
    \omega(r)
:=
\begin{cases}
\lambda_\delta v(r), & \rho\leq r \leq \tau_0,\\[0.2cm]
\mu_\delta v(r), & \tau_0\leq r\leq \sigma.
\end{cases}
\end{equation}
Note that
$
\tau_0=\tau_0(\delta)\to\tau,
$
as $\delta\rightarrow0^{+}$. In order to prove \eqref{limits2}, we shall study the behavior of $\lambda_\delta$, $\mu_\delta$ and some asymptotic estimates.

\begin{lemma}\label{convergencias}
As \(\delta\to0^+\), there hold
\begin{equation}\label{lambda-mu}
    \lambda_\delta\to1
\qquad
\mbox{and}
\qquad
\mu_\delta\to1.
\end{equation}
\end{lemma}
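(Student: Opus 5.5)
We treat the piece on \(\Omega(\rho,\tau_0)\); the piece on \(\Omega(\tau_0,\sigma)\) is symmetric. The idea is to compare the Nehari identity for \(\lambda_\delta v\) with the one satisfied by \(u\) on \(\Omega(\rho,\tau)\). Write
\[
A_\delta:=\int_{\Omega(\rho,\tau_0)}|\nabla v|^p\dx,\quad
B_\delta:=\int_{\Omega(\rho,\tau_0)}|\nabla v|^q\dx,\quad
C_\delta:=\int_{\Omega(\rho,\tau_0)}|x|^\alpha|v|^s\dx .
\]
Then \(\lambda_\delta\) is characterized by
\[
\lambda_\delta^{p}A_\delta+\lambda_\delta^{q}B_\delta=\lambda_\delta^{s}C_\delta .
\]
Since \(u_{i-1}\in\mathcal N_{\Omega(\rho,\tau)}\) is a critical point, the limiting quantities
\[
A:=\int_{\Omega(\rho,\tau)}|\nabla u|^p\dx,\quad B:=\int_{\Omega(\rho,\tau)}|\nabla u|^q\dx,\quad C:=\int_{\Omega(\rho,\tau)}|x|^\alpha|u|^s\dx
\]
satisfy \(A+B=C>0\).

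The first step is to show \(A_\delta\to A\), \(B_\delta\to B\) and \(C_\delta\to C\) as \(\delta\to0^+\). Recall that \(v=u\) on \([\rho,\tau-\delta]\) and that \(\tau_0\in(\tau-\delta,\tau+\delta)\).
\begin{itemize}
\item On \((\tau-\delta,\tau_0)\), the bound \eqref{delta-small} gives \(|v'|\le C_0\). This strip has measure \(O(\delta)\) and \(|x|\) is bounded there, so its contribution to \(A_\delta\) and \(B_\delta\) is \(O(\delta)\).
\item On the same strip \(|v|\le C_0\cdot 2\delta\), so its contribution to \(C_\delta\) also vanishes. Near \(\tau>0\) the weight \(|x|^\alpha\) is bounded, so no singularity arises.
\item The missing part of \(\Omega(\rho,\tau)\), namely \(\Omega(\tau-\delta,\tau)\), has vanishing measure. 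Since \(u\in C^1\) up to \(\tau\) from the left, with one-sided limit \(u'_-\) existing by \eqref{derivatives}, \(|u'|\) is bounded near \(\tau^-\), and this part contributes \(o(1)\).
\item Near \(\rho\) nothing changes, so no issue arises even when \(\rho=0\) or the pieces involve infinity.
\end{itemize}
By absolute continuity of the integral on \(\Omega(\rho,\tau-\delta)\uparrow\Omega(\rho,\tau)\), the convergences follow. For the exterior piece \(\Omega(\tau_0,\sigma)\) with \(\sigma=\infty\), the argument is identical, since the modification is localized near \(\tau\).

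The second step divides the Nehari identity by \(\lambda_\delta^{q}\):
\[
\lambda_\delta^{p-q}A_\delta+B_\delta=\lambda_\delta^{s-q}C_\delta .
\]
This forces \((\lambda_\delta)\) to be bounded away from \(0\) and \(\infty\).
\begin{itemize}
\item If \(\lambda_\delta\to0\) along a subsequence, the left side blows up when \(A>0\) (which holds since \(u\not\equiv0\)), while the right side tends to \(0\).
\item If \(\lambda_\delta\to\infty\), the right side blows up while the left side stays bounded, because \(C>0\).
\end{itemize}
Take any limit point \(\lambda_0\). Passing to the limit gives \(\lambda_0^pA+\lambda_0^qB=\lambda_0^sC\). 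By the uniqueness in Lemma \ref{lemaproj}, applied to \(u_{i-1}\in\mathcal N_{\Omega(\rho,\tau)}\), the function \(t\mapsto t^{p-s}A+t^{q-s}B-C\) is strictly decreasing, so \(t=1\) is its only zero. Hence \(\lambda_0=1\), and since every limit point equals \(1\), \(\lambda_\delta\to1\). This is the same argument as in the proof of Proposition \ref{levels}\((d)\). The same reasoning gives \(\mu_\delta\to1\).

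The main obstacle is the first step, which needs uniform control of \(v'\) on the modified strip together with boundedness of \(u'\) near \(\tau\) from each side. The plan is to take both from \eqref{derivatives} and \eqref{asymptotics2}, which require the one-sided derivatives \(u'_\pm\) to be finite. If their finiteness is not already guaranteed, we would first justify it from the radial flux identity: \(r^{N-1}a(u'(r))\) has an integrable derivative by \eqref{radial-equation-nodal} on a neighborhood of \(\tau\), where \(\tau>0\) and \(u\) is bounded, so it has a finite limit at \(\tau\). Since \(a\) is a homeomorphism, \(u'_\pm\) are finite.
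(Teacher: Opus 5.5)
Your proposal is correct and takes essentially the paper's route. You first show $A_\delta\to A$, $B_\delta\to B$, $C_\delta\to C$, using that the modification of $v$ is confined to a strip of width $O(\delta)$ near $\tau$. You then pass to the limit in the Nehari identity and use $A+B=C$ to force every limit point of $\lambda_\delta$ to equal $1$. Two of your steps fill points the paper leaves implicit: your explicit argument that $\lambda_\delta$ stays bounded away from $0$, and your flux-identity argument that $u'_\pm$ are finite.
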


\begin{proof} 
We prove only the first convergence of \eqref{lambda-mu}, since the second one is analogous. Recalling that $\tau_0=\tau_0(\delta)$, set
\[
A_\delta:=
\int_\rho^{\tau_0}|v'|^p r^{N-1}\,\mathrm{d}r,
\quad
B_\delta:=
\int_\rho^{\tau_0}|v'|^q r^{N-1}\,\mathrm{d}r,
\quad
\mbox{and}
\quad
C_\delta:=
\int_\rho^{\tau_0}|v|^s r^{\alpha+N-1}\,\mathrm{d}r.
\]
Similarly, set
\[
A:=
\int_\rho^\tau |u'|^p r^{N-1}\,\mathrm{d}r,
\quad
B:=
\int_\rho^\tau |u'|^q r^{N-1}\,\mathrm{d}r,
\quad
\mbox{and}
\quad
C:=
\int_\rho^\tau |u|^s r^{\alpha+N-1}\,\mathrm{d}r.
\]
Since \(v=u\) on \((\rho,\tau-\delta)\), while the remaining intervals have length \(O(\delta)\) and \(u'\), \(v'\) are bounded near \(\tau\) (see \eqref{delta-small}), we have
\begin{equation}\label{delta-convergences}
A_\delta\to A,
\quad
B_\delta\to B,
\quad
C_\delta\to C, \quad \mbox{as } \delta\rightarrow0^{+}.
\end{equation}
Taking into account that
$
\lambda_\delta v\in\mathcal{N}_{\Omega(\rho,\tau_0)},
$
we have
\begin{equation}\label{delta-identity}
A_\delta\lambda_\delta^{p-s}
+
B_\delta\lambda_\delta^{q-s}
=
C_\delta.
\end{equation}
In view of \eqref{delta-convergences} and \eqref{delta-identity}, we conclude that \((\lambda_\delta)_\delta\) is bounded. Thus, up to a subsequence, we may assume that
$
\lambda_\delta\to\lambda_0>0,
$
as \(\delta\to 0^+\). Since \(u\in\mathcal{N}_{\Omega(\rho,\tau)}\), it follows that
$
A+B=C.
$ 
Hence, passing \eqref{delta-identity} to the limit we conclude that \(\lambda_0=1\), which finishes the proof of \eqref{lambda-mu}.
\end{proof}

\begin{lemma}\label{convergencias}
The following asymptotic estimates hold:
\begin{equation}\label{delta1}
\int_{\tau-\delta}^{\tau}
|u|^s r^{\alpha+N-1}\,\mathrm{d}r=o(\delta)
\quad
\mbox{and}
\quad
\int_{\tau-\delta}^{\tau_0}
\lambda_\delta^s |v|^s r^{\alpha+N-1}\,\mathrm{d}r=o(\delta),
\end{equation}
\begin{equation}\label{delta2}
\int_{\tau-\delta}^{\tau}
\left(|u'|^p+|u'|^q\right)r^{N-1}\,\mathrm{d}r
=
\delta
\left(|u'_-|^p+|u'_-|^q\right)\tau^{N-1}
+
o(\delta),
\end{equation}
\begin{equation}\label{delta3}
\int_{\tau}^{\tau+\delta}
\left(|u'|^p+|u'|^q\right)r^{N-1}\,\mathrm{d}r
=
\delta
\left(|u'_+|^p+|u'_+|^q\right)\tau^{N-1}
+
o(\delta).
\end{equation}
\end{lemma}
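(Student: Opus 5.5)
The plan is to reduce all four estimates to the continuity of $u$ and $u'$ up to the nodal radius $\tau$ from each side, together with the boundedness of $v'$ given in \eqref{delta-small} and the convergence $\lambda_\delta\to1$ from the preceding lemma. First we use that $u$ is $C^1$ on $(\rho,\tau)$ and on $(\tau,\sigma)$, and that the one-sided limits $u'_-$ and $u'_+$ in \eqref{derivatives} exist and are finite. This is implicit in the construction: from the radial equation \eqref{radial-equation-nodal}, the flux $r^{N-1}a(u'(r))$ with $a(t)=|t|^{p-2}t+|t|^{q-2}t$ has a bounded derivative near $\tau$, and $a$ is a homeomorphism of $\mathbb{R}$. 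Consequently $u'$ is bounded near $\tau$ on each side, say $|u'|\le C_0$ on $(\tau-\delta_0,\tau+\delta_0)\setminus\{\tau\}$, and it extends continuously to $\tau$ from each side.

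For \eqref{delta1}, note that $u(\tau)=0$ and $|u'|\le C_0$ give $|u(r)|\le C_0|r-\tau|\le C_0\delta$ on $(\tau-\delta,\tau)$. Also $r^{\alpha+N-1}$ is bounded there, since $\tau>0$ and $\delta$ is small. Hence
\[
\int_{\tau-\delta}^{\tau}|u|^s r^{\alpha+N-1}\,\mathrm{d}r\le C\delta^{s+1}=o(\delta).
\]
For the second integral, $v$ is linear on $[\tau-\delta,\tau_0]$ with $v(\tau_0)=0$ and $|v'|\le C_0$ by \eqref{delta-small}. Moreover $\tau_0-(\tau-\delta)=O(\delta)$ by \eqref{asymptotics3}, so $|v|\le C\delta$ on that interval. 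Since $\lambda_\delta\to1$ keeps $\lambda_\delta^s$ bounded, the integral is $O(\delta^{s+1})=o(\delta)$. The exponent $s>1$ is what makes these terms $o(\delta)$ rather than merely $O(\delta)$. One caveat: \eqref{asymptotics3} requires $u'_-+u'_+\neq0$. This holds because $u'_-\le0$ and $u'_+\le0$ by the sign assumption, and the Hopf lemma gives strict negativity; alternatively we take it as part of the setup. In any case only the crude bound $\tau_0\in(\tau-\delta,\tau+\delta)$ is needed, and that bound holds automatically.

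For \eqref{delta2}, write $g(r):=(|u'(r)|^p+|u'(r)|^q)r^{N-1}$ on $(\tau-\delta,\tau)$. By the one-sided continuity of $u'$ at $\tau$, $g(r)\to(|u'_-|^p+|u'_-|^q)\tau^{N-1}$ as $r\to\tau^-$. Then
\[
\Bigl|\int_{\tau-\delta}^{\tau}g\,\mathrm{d}r-\delta(|u'_-|^p+|u'_-|^q)\tau^{N-1}\Bigr|\le\delta\sup_{(\tau-\delta,\tau)}\bigl|g-(|u'_-|^p+|u'_-|^q)\tau^{N-1}\bigr|=o(\delta),
\]
which is \eqref{delta2}. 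The estimate \eqref{delta3} follows identically on $(\tau,\tau+\delta)$ with $u'_+$.

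The main obstacle is rigorously establishing the finite one-sided limits $u'_\pm$. The first thing to try is integrating \eqref{radial-equation-nodal} on $(r,r_1)\subset(\rho,\tau)$. This shows that $r^{N-1}a(u'(r))$ is Cauchy as $r\to\tau^-$, because the right-hand side $r^{N-1+\alpha}|u|^{s-1}$ is bounded near $\tau$ by the $L^\infty$ bound of Remark \ref{rem:Linfty-other-domains}. Inverting $a$ then gives the limit of $u'$; the same argument applies on $(\tau,\sigma)$.
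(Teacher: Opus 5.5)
Your proposal is correct and follows essentially the same route as the paper. The bounds $|u|\le C\delta$ near $\tau$ and $|v|\le C\delta$ (via \eqref{delta-small}) give the $O(\delta^{s+1})$ estimates in \eqref{delta1}, and the one-sided continuity of $(|u'|^p+|u'|^q)r^{N-1}$ at $\tau$ gives \eqref{delta2}--\eqref{delta3}. You also justify that the limits $u'_\pm$ exist and are finite, via the flux $r^{N-1}a(u'(r))$, whereas the paper simply assumes this; your argument for it is sound.
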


\begin{proof}
In order to prove \eqref{delta1}, using that the one-sided derivative
$
u'_-=\lim_{r\to\tau^-}u'(r)
$
exists, there exist \(\delta_0>0\) and \(C_1>0\) such that
$
|u'(r)|\leq C_1,
$
for $r\in(\tau-\delta_0,\tau).
$
Hence, taking into account that $u(\tau)=0$, for \(r\in(\tau-\delta_0,\tau)\), we have
\[
|u(r)|
\leq
\int_r^\tau |u'(t)|\,\mathrm{d}t
\leq
C_1(\tau-r).
\]
Since \(\tau-\delta_0>0\), the function \(r\mapsto r^{\alpha+N-1}\) is bounded on
\((\tau-\delta_0,\tau)\).  Consequently, for \(0<\delta<\delta_0\), we deduce
\[
\begin{aligned}
\int_{\tau-\delta}^{\tau}
|u|^s r^{\alpha+N-1}\,\mathrm{d}r
&\leq
C_1^s\int_{\tau-\delta}^{\tau}
(\tau-r)^s r^{\alpha+N-1}\,\mathrm{d}r\\
&\leq C_2\int_{\tau-\delta}^{\tau}
(\tau-r)^s\,\mathrm{d}r\\
&=
C_3\delta^{s+1}=o(\delta).
\end{aligned}
\]
Using the fact that \(v'\) is bounded for \(\delta>0\) small (see \eqref{delta-small}), the second estimate is obtained in an analogous way. This proves \eqref{delta1}.

To prove \eqref{delta2}, set
$
g(r):=|u'(r)|^p r^{N-1}.
$
It follows from \eqref{derivatives} that
$
g(r)\to |u'_-|^p\tau^{N-1},
$ as $r\to\tau^-.
$
Here we use the existence of the one-sided limit \(u'_-\), together with the continuity of the maps \(t\mapsto |t|^p\) and \(r\mapsto r^{N-1}\). Therefore, we obtain
\[
\begin{aligned}
\int_{\tau-\delta}^{\tau}
|u'|^p r^{N-1}\,\mathrm{d}r
&=
\int_{\tau-\delta}^{\tau}
g(r)\,\mathrm{d}r
=
\int_{\tau-\delta}^{\tau}
\left(g(r)-|u'_-|^p\tau^{N-1}\right)\,\mathrm{d}r
+
\int_{\tau-\delta}^{\tau}
|u'_-|^p\tau^{N-1}\,\mathrm{d}r
\\
&=
\delta |u'_-|^p\tau^{N-1}
+
\int_{\tau-\delta}^{\tau}
\left(g(r)-|u'_-|^p\tau^{N-1}\right)\,\mathrm{d}r\\
& \leq \delta |u'_-|^p\tau^{N-1}
+\delta
\sup_{r\in[\tau-\delta,\tau)}
\left|
g(r)-|u'_-|^p\tau^{N-1}
\right|\\
&= \delta |u'_-|^p\tau^{N-1}
+o(\delta).
\end{aligned}
\]
By setting $
h(r):=|u'(r)|^q r^{N-1}
$
and repeating the argument, we conclude that \eqref{delta2} holds.

The proof of \eqref{delta3} is analogous and therefore we will omit it. The proof is completed
\end{proof}

{For \(0<\rho<\sigma\), we denote by
\[
I_{\rho,\sigma}:=I_{\Omega(\rho,\sigma)}
\]
the energy functional restricted to the domain
\[
\Omega(\rho,\sigma):=\{x\in\mathbb{R}^N:\ 0\le\rho<|x|<\sigma\le \infty\}.
\]
Thus, for a radial function \(u=u(r)\), we have
\[
I_{\rho,\sigma}(u)
=
\omega_N
\int_{\rho}^{\sigma}
\left(
\frac{|u'|^p}{p}
+
\frac{|u'|^q}{q}
-
\frac{|u|^s}{s}r^\alpha
\right)
r^{N-1}\,\mathrm{d}r,
\]
where \(\omega_N=|\mathbb{S}^{N-1}|\).} For the sake of simplicity, henceforth, we ommit $\omega_N$.

\begin{lemma}\label{lem:energy-decrease-omega}
If
$
u'_-\neq u'_+,
$
then, for \(\delta>0\) sufficiently small, there holds
$
I_{\rho,\sigma}(\omega)
<
I_{\rho,\sigma}(u).
$
\end{lemma}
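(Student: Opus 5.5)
The plan is to split the energy of $\omega$ along the new nodal radius $\tau_0$ and compare each half with the corresponding half of $u$, using that $u|_{(\rho,\tau)}$ and $u|_{(\tau,\sigma)}$ maximize the energy along their own rays. Since the energy density is local in $r$, we have
\[
I_{\rho,\sigma}(\omega)=I_{\rho,\tau_0}(\lambda_\delta v)+I_{\tau_0,\sigma}(\mu_\delta v),\qquad
I_{\rho,\sigma}(u)=I_{\rho,\tau}(u)+I_{\tau,\sigma}(u).
\]
By construction $u|_{\Omega(\rho,\tau)}\in\mathcal N_{\Omega(\rho,\tau)}$, so Lemma \ref{lemaproj} gives $I_{\rho,\tau}(u)=\max_{t\ge0}I_{\rho,\tau}(tu)\ge I_{\rho,\tau}(\lambda_\delta u)$. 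Likewise $I_{\tau,\sigma}(u)\ge I_{\tau,\sigma}(\mu_\delta u)$. This replaces the unknown comparison by one at the same scaling factor.

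Since $v=u$ on $[\rho,\tau-\delta]$, the contributions from $(\rho,\tau-\delta)$ cancel exactly in $I_{\rho,\tau_0}(\lambda_\delta v)-I_{\rho,\tau}(\lambda_\delta u)$. What remains are integrals over intervals of length $O(\delta)$. The $|x|^\alpha|\cdot|^s$ terms there are $o(\delta)$ by \eqref{delta1}, using $\lambda_\delta\to1$ from Lemma \ref{convergencias}. Hence
\[
I_{\rho,\sigma}(\omega)-I_{\rho,\sigma}(u)\le \sum_{m\in\{p,q\}}\frac{1}{m}\Big[\lambda_\delta^m\!\int_{\tau-\delta}^{\tau_0}\!|v'|^m r^{N-1}+\mu_\delta^m\!\int_{\tau_0}^{\tau+\delta}\!|v'|^m r^{N-1}-\lambda_\delta^m\!\int_{\tau-\delta}^{\tau}\!|u'|^m r^{N-1}-\mu_\delta^m\!\int_{\tau}^{\tau+\delta}\!|u'|^m r^{N-1}\Big]+o(\delta).
\]

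Write $a:=u'_-$, $b:=u'_+$ and $\bar m:=(a+b)/2$. By \eqref{asymptotics2}, $v'=\bar m+o(1)$ on $(\tau-\delta,\tau+\delta)$. By \eqref{asymptotics3} and its analogue on the right, the two subintervals have lengths $\frac{|a|}{|\bar m|}\delta+o(\delta)$ and $\frac{|b|}{|\bar m|}\delta+o(\delta)$. The $u'$-integrals are evaluated by \eqref{delta2}--\eqref{delta3}, and $\lambda_\delta,\mu_\delta\to1$ holds by Lemma \ref{convergencias}. The bracket for each $m\in\{p,q\}$ then equals
\[
\delta\,\tau^{N-1}\Big[(|a|+|b|)\,|\bar m|^{m-1}-|a|^m-|b|^m\Big]+o(\delta).
\]
Because $u>0$ on $(\rho,\tau)$ and $u<0$ on $(\tau,\sigma)$, both $a\le0$ and $b\le0$, so $|\bar m|=(|a|+|b|)/2$. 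The bracket therefore reduces to
\[
2\Big(\tfrac{|a|+|b|}{2}\Big)^m-|a|^m-|b|^m.
\]
Since $a\ne b$ with the same sign forces $|a|\ne|b|$, strict convexity of $t\mapsto t^m$ for $m>1$ makes this quantity a strictly negative constant $-\kappa_m$, independent of $\delta$. Summing over $m\in\{p,q\}$ gives $I_{\rho,\sigma}(\omega)-I_{\rho,\sigma}(u)\le-\delta\tau^{N-1}(\kappa_p/p+\kappa_q/q)+o(\delta)<0$ for $\delta$ small.

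The main obstacle is the $o(\delta)$ bookkeeping. I must check that the factors $\lambda_\delta^m,\mu_\delta^m$ multiplying $O(\delta)$ terms only contribute $o(\delta)$ (they do, since they tend to $1$). I also need the degenerate situations to be harmless. The case $\bar m=0$ cannot occur, since it would force $a=b=0$. The case where one of $a,b$ vanishes is fine, since the formulas stay valid with $\tau_0$ approaching an endpoint. I would first verify these uniformities directly from \eqref{asymptotics}, \eqref{eq:tau0-asymptotic} and \eqref{delta-small}.
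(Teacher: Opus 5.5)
Your proposal is correct and is essentially the paper's argument. Both use the Nehari maximality of each half of $u$ at the fixed scalings $\lambda_\delta,\mu_\delta$, the $o(\delta)$ estimates \eqref{delta1}--\eqref{delta3} with $\lambda_\delta,\mu_\delta\to1$, and strict convexity of $t\mapsto t^m$ to get a negative first-order term. The differences are cosmetic:
\begin{itemize}
\item You track the two subinterval lengths separately via \eqref{asymptotics3}; the paper only needs their sum $2\delta$, since the factors $\lambda_\delta,\mu_\delta$ tend to $1$.
\item You explicitly treat the degenerate case $u'_-=0$ or $u'_+=0$, which the paper bypasses by asserting $u'_\pm<0$ without proof.
\end{itemize}
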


\begin{proof}
We decompose
\begin{equation}\label{decomposicao}
I_{\rho,\sigma}(\omega)-I_{\rho,\sigma}(u)
=
I_1+I_2+I_3,    
\end{equation}
where
\[
I_1
:=
I_{\rho,\tau-\delta}(\omega)
-
I_{\rho,\tau}(u), \quad I_2
:=
I_{\tau+\delta,\sigma}(\omega)
-
I_{\tau,\sigma}(u), \quad I_3
:=
I_{\tau-\delta,\tau+\delta}(\omega).
\]
We first estimate \(I_1\). Taking into account that \(\omega=\lambda_\delta u\) on \((\rho,\tau-\delta)\), we can write
\[
I_1
=
\left[
I_{\rho,\tau}(\lambda_\delta u)
-
I_{\rho,\tau}(u)
\right]
-
I_{\tau-\delta,\tau}(\lambda_\delta u).
\]
Since \(u\in\mathcal{N}_{\Omega(\rho,\tau)}\), by Lemma \ref{lemaproj}, \(I_{\rho,\tau}(u) = \max_{t\ge 0}I_{\rho,\tau}(tu)  \). So,
\[
I_{\rho,\tau}(\lambda_\delta u)
-
I_{\rho,\tau}(u)
\leq 0,
\]
which implies that
\begin{equation}\label{I1}
I_1
\leq
-I_{\tau-\delta,\tau}(\lambda_\delta u).
\end{equation}
Therefore, it follows from \eqref{delta1}, \eqref{delta2}, \eqref{lambda-mu} and \eqref{I1} that
\begin{equation}\label{eq:I1-estimate}
I_1
\leq
-\delta
\left(
\frac{|u'_-|^p}{p}
+
\frac{|u'_-|^q}{q}
\right)\tau^{N-1}
+
o(\delta).
\end{equation}
Analogously, using \eqref{delta3}, one may deduce
\begin{equation}\label{eq:I2-estimate}
I_2
\leq
-\delta
\left(
\frac{|u'_+|^p}{p}
+
\frac{|u'_+|^q}{q}
\right)\tau^{N-1}
+
o(\delta).
\end{equation}

It remains to estimate \(I_3\). Since
$
\tau_0\in(\tau-\delta,\tau+\delta),
$
we have
$
|r-\tau_0|\leq 2\delta.
$
Thus, by the mean value theorem and \eqref{delta-small}, there exists \(C>0\) such that 
\[
|v(r)| = |v(r)-v(\tau_0)| \le C\delta,
\quad
\forall \, r\in(\tau-\delta,\tau+\delta).
\]
Moreover, for \(\tau>0\), we have that \(r^{\alpha+N-1}\) is bounded on \((\tau-\delta,\tau+\delta)\). Therefore,
\[
\begin{aligned}
\int_{\tau-\delta}^{\tau+\delta} \frac{|\omega|^{s}}{s}r^{\alpha+N -1}
&=
\int_{\tau-\delta}^{\tau_0}
\frac{\lambda_\delta^s |v|^s}{s}r^{\alpha+N-1}\,\mathrm{d}r
+
\int_{\tau_0}^{\tau+\delta}
\frac{\mu_\delta^s |v|^s}{s}r^{\alpha+N-1}\,\mathrm{d}r
\\
&\leq
C_1\int_{\tau-\delta}^{\tau+\delta}|v|^s\,\mathrm{d}r
\leq
C_1\int_{\tau-\delta}^{\tau+\delta}\delta^s\,\mathrm{d}r
=
C_2\delta^{s+1}
=
o(\delta).
\end{aligned}
\]

For the sake of convenience, we set
\[
A_\delta:=
\frac{u(\tau+\delta)-u(\tau-\delta)}{2\delta}=\frac{u'_-+u'_+}{2}+o(\delta)=v'(r),
\]
where we have used \eqref{asymptotics2}. For \(m\in\{p,q\}\), by definition of \(\omega\) (see \eqref{def-w}) we have
\begin{equation}\label{est-tau+delta}
   \begin{aligned}
\int_{\tau-\delta}^{\tau+\delta}
|\omega'|^m r^{N-1}\,\mathrm{d}r
&=
\int_{\tau-\delta}^{\tau_0}
\lambda_\delta^m |A_\delta|^m r^{N-1}\,\mathrm{d}r
+
\int_{\tau_0}^{\tau+\delta}
\mu_\delta^m |A_\delta|^m r^{N-1}\,\mathrm{d}r
\\
&=
|A_\delta|^m
\left[
\lambda_\delta^m
\int_{\tau-\delta}^{\tau_0} r^{N-1}\,\mathrm{d}r
+
\mu_\delta^m
\int_{\tau_0}^{\tau+\delta} r^{N-1}\,\mathrm{d}r
\right].
\end{aligned} 
\end{equation}
Since
$
r^{N-1}=\tau^{N-1}+O(\delta),
$
as $\delta\rightarrow0^{+}$, uniformly for \(r\in(\tau-\delta,\tau+\delta)\), we obtain
\[
\int_{\tau-\delta}^{\tau_0} r^{N-1}\,\mathrm{d}r
=
\tau^{N-1}\bigl(\tau_0-(\tau-\delta)\bigr)+o(\delta),
\]
and
\[
\int_{\tau_0}^{\tau+\delta} r^{N-1}\,\mathrm{d}r
=
\tau^{N-1}\bigl(\tau+\delta-\tau_0\bigr)+o(\delta).
\]
Thus, using \eqref{lambda-mu} in \eqref{est-tau+delta} we deduce
\[
\begin{aligned}
\int_{\tau-\delta}^{\tau+\delta}
|\omega'|^m r^{N-1}\,\mathrm{d}r
&=
|A_\delta|^m
\left[
\int_{\tau-\delta}^{\tau_0}r^{N-1}\,\mathrm{d}r
+
\int_{\tau_0}^{\tau+\delta}r^{N-1}\,\mathrm{d}r
+
o(\delta)
\right]
\\
&=
\left(
\left|\frac{u'_-+u'_+}{2}\right|^m+o_\delta(1)
\right)
\left[
2\delta\,\tau^{N-1}
+
o(\delta)
\right]
\\
&=
2\delta\,\tau^{N-1}
\left|\frac{u'_-+u'_+}{2}\right|^m
+
o(\delta).
\end{aligned}
\]
Therefore, we conclude that
\begin{equation}\label{eq:I3-estimate}
I_3
=
I_{\tau-\delta,\tau+\delta}(\omega)
=
2\delta\,\tau^{N-1}
\left(
\frac{1}{p}
\left|\frac{u'_-+u'_+}{2}\right|^p
+
\frac{1}{q}
\left|\frac{u'_-+u'_+}{2}\right|^q
\right)
+
o(\delta).
\end{equation}
Combining \eqref{decomposicao} together with  \eqref{eq:I1-estimate}-\eqref{eq:I3-estimate}, we obtain
\[
\begin{aligned}
I_{\rho,\sigma}(\omega)-I_{\rho,\sigma}(u)
&\leq
-\delta\tau^{N-1}
\left(
\frac{|u'_-|^p}{p}
+
\frac{|u'_-|^q}{q}
+
\frac{|u'_+|^p}{p}
+
\frac{|u'_+|^q}{q}
\right)
\\
&\quad
+
2\delta\,\tau^{N-1}
\left(
\frac{1}{p}
\left|\frac{u'_-+u'_+}{2}\right|^p
+
\frac{1}{q}
\left|\frac{u'_-+u'_+}{2}\right|^q
\right)
+
o(\delta).
\end{aligned}
\]
Equivalently, 
\[
I_{\rho,\sigma}(\omega)-I_{\rho,\sigma}(u)
\leq
-\xi_0\delta+o(\delta),
\]
where
\[
\begin{aligned}
\xi_0
:=
2\tau^{N-1}
\bigg[
&\frac{1}{p}
\left(
\frac{|u'_-|^p+|u'_+|^p}{2}
-
\left|\frac{u'_-+u'_+}{2}\right|^p
\right)
+
\frac{1}{q}
\left(
\frac{|u'_-|^q+|u'_+|^q}{2}
-
\left|\frac{u'_-+u'_+}{2}\right|^q
\right)
\bigg].
\end{aligned}
\]
Recall that $u(\tau)=0$, $u(r)>0$ in $(\rho,\tau)$, and $u(r)<0$ in $(\tau,\sigma)$. Hence, $u'_-<0$ and $u'_+<0$, and, by assumption,
$
u'_-\neq u'_+,
$
we have
$
|u'_-|\neq |u'_+|.
$
Therefore, by the strict convexity of the functions
$
t\mapsto t^p
$ and $t\mapsto t^q$
on \([0,\infty)\), we conclude that
$
\xi_0>0.
$
Consequently,
\[
I_{\rho,\sigma}(\omega)-I_{\rho,\sigma}(u)<0,
\]
for \(\delta>0\) sufficiently small. Therefore
\[
I_{\rho,\sigma}(\omega)<I_{\rho,\sigma}(u),
\]
and the proof is complete. 
\end{proof}

\subsection{Proof of Theorem \ref{existencianodal}} In the construction of $u$, choose each positive radial solution
$u_j$ so that
\[
I_{\bar\rho_j,\bar\rho_{j+1}}(u_j)
=
c_{\mathcal N_{\Omega(\bar\rho_j,\bar\rho_{j+1})}},
\qquad j=0,\ldots,k.
\]
See that
\[
I_{\rho_0,\infty}(u)
=
\sum_{j=0}^{k}
c_{\mathcal N_{\Omega(\bar\rho_j,\bar\rho_{j+1})}}
=
\mathcal{C}(\bar\rho_1,\ldots,\bar\rho_k),
\quad \rho_0=\bar\rho_0=0.
\]
Extend $\omega$ by $u$ outside $(\rho,\sigma)$. By construction,
\[
\omega|_{\Omega(\rho,\tau_0)}
\in\mathcal N_{\Omega(\rho,\tau_0)},
\qquad
\omega|_{\Omega(\tau_0,\sigma)}
\in\mathcal N_{\Omega(\tau_0,\sigma)}.
\]
Replacing $\bar\rho_i=\tau$ by $\tau_0$ yields a partition in \(\mathcal{P}_k\). From \eqref{minimizante},
\begin{equation}\label{final}
    \begin{aligned}
I_{\rho_0,\infty}(u)
&=\mathcal{C}(\bar\rho_1,\ldots,\bar\rho_k)=\Theta:=\inf_{\mathcal{P}_k}\mathcal{C}(\rho_1,\ldots,\rho_k)\le \mathcal{C}(\bar\rho_1,\ldots,\bar\rho_{i-1},
\tau_0,\bar\rho_{i+1},\ldots,\bar\rho_k)\\
&\le
\sum_{\substack{0\le j\le k\\j\notin\{i-1,i\}}}
I_{\bar\rho_j,\bar\rho_{j+1}}(u)
+I_{\rho,\tau_0}(\omega)+I_{\tau_0,\sigma}(\omega)
=I_{\rho_0,\infty}(\omega).
\end{aligned}
\end{equation}
Assume, by contradiction, that
\[
u'_-\neq u'_+.
\]
Then, by Lemma \ref{lem:energy-decrease-omega}, for \(\delta>0\) sufficiently small, we have
\[
I_{\rho_0,\infty}(\omega)<I_{\rho_0,\infty}(u),
\]
which contradicts \eqref{final}. Therefore,
\[
u'_-=u'_+.
\]
Finally, by construction, \(u\) is a weak solution of Problem \eqref{eq1} that changes sign exactly at the radii
$
\rho_1,\ldots,\rho_k.
$
Thus \(u\) has exactly \(k+1\) nodal domains, or equivalently \(k\) nodal zeros in the radial variable. This proves Theorem \ref{existencianodal}.

\vspace{0.5cm}
 
\noindent\textbf{Conflict of interest.}
The authors declare that he has no known competing financial interests or personal relationships that could have appeared to influence the work reported in this work.

\medskip

\noindent\textbf{Data availability.}
This work has no associated data.

\medskip

\noindent\textbf{Funding.}
The first author was partially supported by CNPq with grant 301849/2025-0. The second author was partially supported by CNPq with grant 300853/2025-4. The third author was partially supported by FACEPE/Brazil Proc. APQ-0967-1.01/24.

\end{document}